\documentclass[12pt]{amsart}
\usepackage{}
\usepackage{amsmath}
\usepackage{amsfonts}
\usepackage{amssymb}
\usepackage[all]{xy}           
\usepackage{extarrows}

\usepackage{bbding}
\usepackage{txfonts}
\usepackage[shortlabels]{enumitem}
\usepackage{ifpdf}
\ifpdf
\usepackage[colorlinks,final,backref=page,hyperindex]{hyperref}
\else
\usepackage[colorlinks,final,backref=page,hyperindex,hypertex]{hyperref}
\fi
\usepackage{tikz}
\usepackage[active]{srcltx}

\newcommand{\deleted}[1]{}
\newcommand{\delete}[1]{}
\newcommand{\mynote}[1]{}
\newcommand\notes[1]{}

\newcommand\changed[1]{#1}
\changed{}

\newtheorem{theorem}{Theorem}[section]

\newtheorem{lemma}[theorem]{Lemma}
\newtheorem{coro}[theorem]{Corollary}

\newtheorem{prop}[theorem]{Proposition}
\theoremstyle{definition}
\newtheorem{defn}[theorem]{Definition}
\newtheorem{remark}[theorem]{Remark}
\newtheorem{exam}[theorem]{Example}
\deleted{\newtheorem{theorem}{Theorem}[section]
\newtheorem{prop-def}{Proposition-Definition}[section]
\newtheorem{coro-def}{Corollary-Definition}[section]

}

\newcommand{\nc}{\newcommand}
\nc{\tred}[1]{\textcolor{red}{#1}}
\nc{\tblue}[1]{\textcolor{blue}{#1}}
\nc{\tgreen}[1]{\textcolor{green}{#1}}
\nc{\tpurple}[1]{\textcolor{purple}{#1}}
\nc{\btred}[1]{\textcolor{red}{\bf #1}}
\nc{\btblue}[1]{\textcolor{blue}{\bf #1}}
\nc{\btgreen}[1]{\textcolor{green}{\bf #1}}
\nc{\btpurple}[1]{\textcolor{purple}{\bf #1}}

\renewcommand{\Bbb}{\mathbb}

\newcommand{\efootnote}[1]{}

\renewcommand{\textbf}[1]{}

\nc{\mlabel}[1]{\label{#1}}  
\nc{\mcite}[1]{\cite{#1}}  
\nc{\mref}[1]{\ref{#1}}  
\nc{\mbibitem}[1]{\bibitem{#1}} 

\delete{
\nc{\mlabel}[1]{\label{#1}  
{\hfill \hspace{1cm}{\bf{{\ }\hfill(#1)}}}}
\nc{\mcite}[1]{\cite{#1}{{\bf{{\ }(#1)}}}}  
\nc{\mref}[1]{\ref{#1}{{\bf{{\ }(#1)}}}}  
\nc{\mbibitem}[1]{\bibitem[\bf #1]{#1}} 
}

\renewcommand\geq{\geqslant}
\renewcommand\leq{\leqslant}

\renewcommand\bar[1]{\overline{#1}}
\renewcommand\tilde[1]{\widetilde{#1}}

\nc{\lead}{\mathrm{Lead}}
\nc{\Id}{\mathrm{Id}}
\nc{\Irr}{\mathrm{Irr}}
\nc{\End}{\mathrm{End}}
\nc{\vx}{\sigma}
\nc{\vy}{\tau}
\nc{\dvx}{\sigma^{(1)}}
\nc{\dvy}{\tau^{(1)}}
\nc{\done}{\vep}
\nc{\citep}[1]{\cite{#1}}
\nc{\wt}{\mathrm{wt}}
\nc{\bre}[1]{|#1|} \nc{\mapmonoid}{\frakM}
\nc{\disjoint}{\frakM'}
\nc{\ncpoly}[1]{\langle #1\rangle}  
\nc{\mapm}[1]{\lfloor\!|{#1}|\!\rfloor}             
\nc{\diff}[1]{{}^\NC\{ #1 \}}
\nc{\disj}[1]{\{{#1}\}'}
\nc{\mdisj}[1]{\frakM'(#1)}
\nc{\brho}{\bar{\rho}}
\nc{\om}{\bar{\frakm}}
\nc{\frakn}{\mathfrak n}
\nc{\ddeg}[1]{^{(#1)}}
\nc{\opset}{X} \nc{\genset}{{Z}}
\nc{\NC}{\mathrm{{NC}}}
\nc{\leaf}{\mathrm{leaf}}
\nc{\twig}{\mathrm{twig}}
\nc{\fe}{\mathrm{fl}}
\nc{\munderline}[1]{#1}
\nc{\bo}{o}
\nc{\ofe}{\mathrm{ofl}}
\nc{\dfe}{\mathrm{dfe}}
\nc{\fex}{\mathrm{fex}}
\nc{\dl}{\mathrm{dlex}}
\nc{\db}{\mathrm{db}}

\nc{\bin}[2]{ (_{\stackrel{\scs{#1}}{\scs{#2}}})}  
\nc{\bs}{\bar{S}}
\nc{\cosum}{\sqsubset}
\nc{\la}{\longrightarrow}
\nc{\rar}{\rightarrow}
\nc{\dar}{\downarrow}
\nc{\dprod}{**}
\nc{\dap}[1]{\downarrow \rlap{$\scriptstyle{#1}$}}
\nc{\md}{\mathrm{dth}}
\nc{\uap}[1]{\uparrow \rlap{$\scriptstyle{#1}$}}
\nc{\defeq}{\stackrel{\rm def}{=}}
\nc{\disp}[1]{\displaystyle{#1}}
\nc{\dotcup}{\ \displaystyle{\bigcup^\bullet}\ }
\nc{\gzeta}{\bar{\zeta}}
\nc{\hcm}{\ \hat{,}\ }
\nc{\hts}{\hat{\otimes}}

\nc{\free}[1]{\bar{#1}}
\nc{\uni}[1]{\tilde{#1}}
\nc{\hcirc}{\hat{\circ}}
\nc{\leng}{\ell}
\nc{\lleft}{[}
\nc{\lright}{]}
\nc{\lc}{\lfloor}
\nc{\rc}{\rfloor}
\nc{\lb}{[} 
\nc{\rb}{]} 
\nc{\curlyl}{\left \{ \begin{array}{c} {} \\ {} \end{array}
    \right .  \!\!\!\!\!\!\!}
\nc{\curlyr}{ \!\!\!\!\!\!\!
    \left . \begin{array}{c} {} \\ {} \end{array}
    \right \} }
\nc{\longmid}{\left | \begin{array}{c} {} \\ {} \end{array}
    \right . \!\!\!\!\!\!\!}
\nc{\onetree}{\bullet}
\nc{\ora}[1]{\stackrel{#1}{\rar}}
\nc{\ola}[1]{\stackrel{#1}{\la}}

\nc{\mot}{{{\boxtimes\,}}}
\nc{\otm}{\overline{\boxtimes}}
\nc{\sprod}{\bullet}
\nc{\scs}[1]{\scriptstyle{#1}}
\nc{\mrm}[1]{{\rm #1}}
\nc{\msum}{\sum\limits}
\nc{\margin}[1]{\marginpar{\rm #1}}   
\nc{\dirlim}{\displaystyle{\lim_{\longrightarrow}}\,}
\nc{\invlim}{\displaystyle{\lim_{\longleftarrow}}\,}
\nc{\mvp}{\vspace{0.3cm}}
\nc{\tk}{^{(k)}}
\nc{\tp}{^\prime}
\nc{\ttp}{^{\prime\prime}}
\nc{\svp}{\vspace{2cm}}
\nc{\vp}{\vspace{8cm}}
\nc{\proofbegin}{\noindent{\bf Proof: }}
\nc{\proofend}{$\blacksquare$ \vspace{0.3cm}}
\nc{\modg}[1]{\!<\!\!{#1}\!\!>}
\nc{\intg}[1]{F_C(#1)}
\nc{\lmodg}{\!<\!\!}
\nc{\rmodg}{\!\!>\!}
\nc{\cpi}{\widehat{\Pi}}
\nc{\sha}{{\mbox{\cyr X}}}  
\nc{\shap}{{\mbox{\cyrs X}}} 
\nc{\shpr}{\diamond}    
\nc{\shp}{\ast}
\nc{\shplus}{\shpr^+}
\nc{\shprc}{\shpr_c}    
\nc{\msh}{\ast}
\nc{\zprod}{m_0}
\nc{\oprod}{m_1}
\nc{\vep}{\varepsilon}
\nc{\labs}{\mid\!}
\nc{\rabs}{\!\mid}

\nc{\dth}{d}
\nc{\mmbox}[1]{\mbox{\ #1\ }}
\nc{\fp}{\mrm{FP}} \nc{\rchar}{\mrm{char}} \nc{\Fil}{\mrm{Fil}}
\nc{\Mor}{Mor\xspace}
\nc{\gmzvs}{gMZV\xspace}
\nc{\gmzv}{gMZV\xspace}
\nc{\mzv}{MZV\xspace}
\nc{\mzvs}{MZVs\xspace}
\nc{\id}{\mrm{id}} 
\nc{\incl}{\mrm{incl}} \nc{\map}{\mrm{Map}} \nc{\mchar}{\rm char}
\nc{\nz}{\rm NZ} \nc{\supp}{\mathrm Supp}

\nc{\Alg}{\mathbf{Alg}}
\nc{\Bax}{\mathbf{Bax}}
\nc{\bff}{\mathbf f}
\nc{\bfk}{{\bf k}}
\nc{\bfone}{{\bf 1}}
\nc{\bfx}{\mathbf x}
\nc{\bfy}{\mathbf y}
\nc{\base}[1]{\bfone^{\otimes ({#1}+1)}} 
\nc{\Cat}{\mathbf{Cat}}
\delete{}
\nc{\detail}{\marginpar{\bf More detail}
    \noindent{\bf Need more detail!}
    \svp}
\nc{\Int}{\mathbf{Int}}
\nc{\Mon}{\mathbf{Mon}}
\nc{\rbtm}{{shuffle }}
\nc{\rbto}{{Rota-Baxter }}
\nc{\remarks}{\noindent{\bf Remarks: }}
\nc{\Rings}{\mathbf{Rings}}
\nc{\Sets}{\mathbf{Sets}}

\nc{\BA}{{\Bbb A}} \nc{\CC}{{\Bbb C}} \nc{\DD}{{\Bbb D}}
\nc{\EE}{{\Bbb E}} \nc{\FF}{{\Bbb F}} \nc{\GG}{{\Bbb G}}
\nc{\HH}{{\Bbb H}} \nc{\LL}{{\Bbb L}} \nc{\NN}{{\Bbb N}}
\nc{\KK}{{\Bbb K}} \nc{\QQ}{{\Bbb Q}} \nc{\RR}{{\Bbb R}}
\nc{\TT}{{\Bbb T}} \nc{\VV}{{\Bbb V}} \nc{\ZZ}{{\Bbb Z}}

\nc{\cala}{{\mathcal A}} \nc{\calc}{{\mathcal C}}
\nc{\cald}{{\mathcal D}} \nc{\cale}{{\mathcal E}}
\nc{\calf}{{\mathcal F}} \nc{\calg}{{\mathcal G}}
\nc{\calh}{{\mathcal H}} \nc{\cali}{{\mathcal I}}
\nc{\call}{{\mathcal L}} \nc{\calm}{{\mathcal M}}
\nc{\caln}{{\mathcal N}} \nc{\calo}{{\mathcal O}}
\nc{\calp}{{\mathcal P}} \nc{\calr}{{\mathcal R}}
\nc{\cals}{{\mathcal S}}
\nc{\calt}{{\mathcal T}} \nc{\calw}{{\mathcal W}}
\nc{\calk}{{\mathcal K}} \nc{\calx}{{\mathcal X}}
\nc{\CA}{\mathcal{A}}

\nc{\fraka}{{\mathfrak a}} \nc{\frakA}{{\mathfrak A}}
\nc{\frakb}{{\mathfrak b}} \nc{\frakB}{{\mathfrak B}}
\nc{\frakD}{{\mathfrak D}} \nc{\frakH}{{\mathfrak H}}
\nc{\frakM}{{\mathfrak M}} \nc{\bfrakM}{\overline{\frakM}}
\nc{\frakm}{{\mathfrak m}} \nc{\frakP}{{\mathfrak P}}
\nc{\frakN}{{\mathfrak N}} \nc{\frakp}{{\mathfrak p}}
\nc{\frakS}{{\mathfrak S}} \nc{\frakx}{{\mathfrak x}}
\nc{\ox}{\bar{\frakx}} \nc{\frakX}{{\mathfrak X}}
\nc{\fraky}{{\mathfrak y}} \nc\dop{\delta}

\font\cyr=wncyr10
\font\cyrs=wncyr7
\nc{\redt}[1]{\textcolor{red}{#1}}

\nc{\Xiaosong}[1]{\textcolor{red}{\tt \underline{Xiaosong:}#1}}
\nc{\Shilong}[1]{\textcolor{blue}{\tt \underline{Shilong:}#1}}

\nc{\N}{N}
\nc{\NM}{N_M}
\nc{\A}{A}
\nc{\Hom}{{\rm Hom}}
\nc{\Ker}{{\rm Ker}}
\nc{\IIm}{{\rm Im}}
\nc{\NR}{U_{\N}(\A)}
\nc{\ot}{\otimes}

\nc{\rbo}{R_{RB}\langle Q \rangle}
\nc{\rbeta}{(R,P)} 
\nc{\salpha}{(S,\alpha)} 
\nc{\tgamma}{(T,\gamma)}
\nc{\otr}{\ot_{\rbeta}} 
\nc{\Free}{F_1} 
\nc{\rf}{\tilde{F}}
\nc{\revise}[1]{\textcolor{red}{#1}}

\begin{document}
\title[Nijenhuis modules and the ring of Nijenhuis operators]{Nijenhuis modules and the ring of Nijenhuis operators}

\author{Shilong Zhang}
\address{College of Science, Northwest A$\&$F University, Yangling 712100, Shaanxi, P.\,R. China}
\email{shlz@nwafu.edu.cn}

\author{Xiao-Song Peng$^{\ast}$}
\address{School of Mathematics and Statistics,
Jiangsu Normal University, Xuzhou, Jiangsu 221116, P.\,R. China}
\email{pengxiaosong3@163.com}
\thanks{$^{\ast}$ the corresponding author}

\author{Liuqing Dong}
\address{College of Science, Northwest A$\&$F University, Yangling 712100, Shaanxi, P.\,R. China}
\email{2024014490@nwafu.edu.cn}


\date{\today}
\begin{abstract}
In this paper, we study the Nijenhuis modules of Nijenhuis algebras. The concepts of free, projective, injective, and flat Nijenhuis modules are introduced, and a construction of free Nijenhuis modules is given. The ring of Nijenhuis operators is introduced, and the relationship between Nijenhuis modules and modules over this ring is established by proving an isomorphism of the corresponding categories. Finally, it is proved that the category of Nijenhuis modules has enough projective, injective, and flat objects.
\end{abstract}

\subjclass[2020]{16D40, 16S10, 16W99}

\keywords{
Nijenhuis algebra; Nijenhuis module; projective module;  injective module; flat module; ring of Nijenhuis operators}

\date{\today}

\maketitle

\tableofcontents

\setcounter{section}{0}


\section{Introduction}
The notion of Nijenhuis operators originated from differential geometry, where Nijenhuis introduced a tensor field whose vanishing Nijenhuis torsion characterizes complex structures on manifolds~\mcite{Nij51}. Later, this concept was adapted to Lie algebras~\mcite{KM} and associative algebras~\mcite{CGM}, leading to the study of a Nijenhuis algebra as an algebra $A$ equipped with a linear operator $\N: A \to A$ satisfying the Nijenhuis identity
\begin{align*}
\N(a) \N(b)=\N (\N(a)b+a\N(b)-\N(ab)), \, \text{ for any } a, b \in A.
\end{align*}
 In recent years, Nijenhuis operators have attracted increasing attention due to their connections with NS-algebras~\mcite{LG, Ler04}, N-dendriform algebras~\mcite{LG}, twisted Rota–Baxter operators~\mcite{Uch08}, and deformation theory~\mcite{Das24}.

The recent researches of Nijenhuis algebras have largely followed those of Rota-Baxter algebras. Commutative Nijenhuis algebras were
constructed in~\mcite{Ebr04, EL} following the construction of free commutative Rota-Baxter
algebras~\mcite{GK00}. Similar to the Rota-Baxter universal enveloping algebra of a (tri-)dendriform algebra constructed in~\mcite{EG08}, the universal enveloping Nijenhuis algebra of an NS algebra was given in~\mcite{LG}. The representation theory of Rota-Baxter algebras were studied in~\mcite{GL,QGG} and recently it was generalized to the multiple case~\mcite{HPZ}. Despite the growing interest in Nijenhuis algebras, their representation theory remains relatively underdeveloped compared to the well-studied Rota–Baxter algebras. In particular, fundamental categorical notions such as projective, injective, and flat modules in the context of Nijenhuis algebras have not been systematically explored, which hinders the application of homological methods to the study of Nijenhuis structures.

In this paper, we study the category of Nijenhuis modules, including the free, projective, injective and flat Nijenhuis modules. We show that there are enough projective, injective and flat objects in this category, enabling us to define the derived functors. More precisely, we construct free Nijenhuis modules explicitly via free operated modules, and characterize when a Nijenhuis algebra itself becomes a free Nijenhuis module. Furthermore, we introduce the ring of Nijenhuis operators $U_N(A)$, which plays a role analogous to the universal enveloping algebra for Lie algebras. We prove that the category of left Nijenhuis modules is isomorphic to the category of left $U_N(A)$-modules, thereby reducing the study of Nijenhuis modules to classical module theory. As an application, we establish the existence of enough projective and injective Nijenhuis modules via the corresponding properties of $U_N(A)$-modules. Finally, we define the tensor product of Nijenhuis modules and investigate flatness, showing that there are enough flat Nijenhuis modules as well. These results lay the homological foundation for further investigations, such as the construction of derived functors in the category of Nijenhuis modules.

{\bf The paper is organized as follows.} In Section~\mref{sec:free}, we introduce the concept of Nijenhuis modules and give basic examples. We then construct free Nijenhuis modules via free operated modules and establish a restricted condition under which a Nijenhui algebra is a free Nijenhui module.  Section~\mref{sec:ringCon} is devoted to the ring of Nijenhuis operators $U_N(A)$. We prove that the category of left Nijenhuis modules is isomorphic to the category of left $U_N(A)$-modules and give a general construction of the ring of Nijenhuis operators. In Section~\mref{sec:pi}, we study projective and injective Nijenhuis modules. We show that there are enough projective and injective Nijenhuis modules. Section~\mref{sec:flat} introduces the tensor product of Nijenhuis modules and investigates flatness. We prove that there are enough flat Nijenhuis modules.

{\bf Notation.}
Throughout this paper, let $\bfk$ be a unitary commutative ring unless the contrary is specified, which will be the base ring of all modules, algebras, coalgebras, bialgebras, tensor products, as well as linear maps.

\section{Nijenhuis module and free Nijenhuis modules}
\mlabel{sec:free}

In this section, we introduce the concept of Nijenhuis modules and give some basic properties of Nijenhuis modules. Then we give a construction of free Nijenhuis modules through free operated modules. Finally, we give a restricted condition under which a Nijenhuis algebra is a free Nijenhuis module over itself.

\subsection{Nijenhuis modules} We first recall the notion of Nijenhuis algebra before introducing the concept of Nijenhuis modules.

\begin{defn}
A {\bf Nijenhuis algebra} $(A,\N)$ is an associative algebra $A$ together with a linear map $\N: A \to A$ such that 
\begin{align*}
\N(a)\N(b)=\N\Big(\N(a)b+a\N(b)-\N(ab)\Big), \ \text{for any} \, a,b \in A.
\end{align*}
The linear map $\N$ is called {\bf a Nijenhuis operator} on $A$.
\end{defn}

\begin{exam}\mcite{Das24}
Let $A$ be an associative algebra. Then
\begin{enumerate}
\item The identity map $\id_A: A \to A$ is a Nijenhuis operator on $A$; 
\item For any element $x \in A$, the left multiplication map $l_x: A \to A, a \mapsto xa$ and the right multiplication map $r_x:A \to A, a \mapsto ax$ are both Nijenhuis operators on $A$;
\item If $\N: A \to A$ is a Nijenhuis operator on $A$ and $\lambda \in {\bf k}$, then $\lambda \N$ is also a Nijenhuis operator on $A$.
\end{enumerate}
\end{exam}

\begin{defn}
Let $(A,\N)$ be a Nijenhuis algebra. A {\bf left (Nijenhuis) $(A,\N)$-module} $(M,\NM)$ is a
left $A$-module $M$ together with a linear operator $\NM: M \to M$ such that
\begin{align}
\N(a)\NM(m)=\NM(\N(a)m + a\NM(m)-\NM (am)), \ \text{for any } a\in A, m\in M. \mlabel{eq:lnm}
\end{align}
\end{defn}

\begin{defn}
Let $(A,\N)$ be a Nijenhuis algebra.  For two left $(A,\N)$-modules $(M,\NM)$ and $(M',\N_{M'})$, a {\bf left $(A,\N)$-module homomorphism} from $(M, \NM)$ to $(M',\NM')$ is a left $A$-module homomorphism $f: M \to M'$ such that $f \circ \NM=\N_{M'} \circ f$. If moreover $f$ is a bijection, then $(M,\N_M)$ is {\bf isomorphic to} $(M',\N_{M'})$, denoted by $(M,\N_M) \cong (M',\N_{M'})$. 
\end{defn}

Denote by $ _{(A,\N)}{\bf Mod}$ the category of left $(A,\N)$-modules, with its objects the left $(A,\N)$-modules and its morphisms the left $(A,\N)$-module homomorphisms. Now we give some examples of left Nijenhuis modules.

\begin{exam}\mlabel{exam:njm}

\begin{enumerate}
\item Let $(A,\N)$ be a Nijenhuis algebra. Then, similar to the way an associative algebra is a left module over itself,  $(A,\N)$ is a left $(A,\N)$-module.

\item For any Nijenhuis algebra $(A, \N)$ and a left $A$-module $M$, $(M,\id_M)$ is a left $(A,\N)$-module.

\item For any Nijenhuis algebra $(A,\N)$ and a left $A$-module $M$, $(M,0_M)$ is a left $(A,\N)$-module where $0_M: M \to M, m \mapsto 0$.

\item For the Nijenhuis algebra $(A, l_x)$ with $x \in A$ and a left $A$-module $M$, define the linear map $l'_x: M \to M, m \mapsto x m$, then $(M,l'_x)$ is a left $(A, l_x)$-module.

\item For two left $(A,\N)$-modules $(M,\N_M)$ and $(M',\N_{M'})$, $(M \oplus M', \N_{M \oplus M'})$ is also a left $(A,\N)$-module, where $\N_{M \oplus M'}: M \oplus M' \to M \oplus M', (m,m') \mapsto (\N_{M}(m), \N_{M'}(m'))$.
\end{enumerate}
\end{exam}

\begin{defn}
Let $(A,\N)$ be a Nijenhuis algebra and $(M,\NM)$ a left $(A,\N)$-module. A {\bf left $(A,\N)$-submodule} of $(M,\NM )$ is a pair $(M_1, \N_{M_1})$, where $M_1$ is a submodule of the $A$-module $M$ and $\N_{M_1}$ is the restriction of $\NM$ to $M_1$ satisfying that $\N_{M_1}(M_1) \subseteq M_1$. 
\end{defn}

Now we define a quotient Nijenhuis module of a left Nijenhuis module as follows.
\begin{lemma}
Let $(A,\N )$ be a Nijenhuis algebra, $(M,\NM)$ a left $(A,\N)$-module and $(M',\N_{M'})$ a submodule of $(M,\NM)$. The pair
$(M/M',\N_{M/M'})$ is a left $(A,\N)$-module, where
$$\N_{M/M'}: M/M' \to M/M', \ m+M' \mapsto \NM(m)+M'.$$
We call $(M/M', \N_{M/M'})$ the {\bf quotient $(A,\N)$-module} of $(M,\NM )$ by $(M', \N_{M'})$.
\mlabel{lem:quotmod}
\end{lemma}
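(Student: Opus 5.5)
The argument has three parts: $M/M'$ is a left $A$-module, $\N_{M/M'}$ is a well-defined linear map, and the Nijenhuis identity \eqref{eq:lnm} holds on the quotient. The first part is standard. By the definition of a left $(A,\N)$-submodule, $M'$ is an $A$-submodule of $M$, so $M/M'$ carries the usual action $a(m+M') = am+M'$, and this action is well defined because $AM'\subseteq M'$.

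For well-definedness of $\N_{M/M'}$, take $m_1+M' = m_2+M'$. Then $m_1-m_2\in M'$, and since $\NM$ is linear,
$$\NM(m_1)-\NM(m_2)=\NM(m_1-m_2)\in \NM(M')=\N_{M'}(M')\subseteq M'.$$
Here we use the submodule hypothesis that $\N_{M'}$ is the restriction of $\NM$ and maps $M'$ into $M'$. Hence $\NM(m_1)+M'=\NM(m_2)+M'$. Linearity of $\N_{M/M'}$ follows at once from linearity of $\NM$ and of the projection $\pi: M\to M/M'$. In other words, $\N_{M/M'}$ is the unique linear map with $\N_{M/M'}\circ\pi=\pi\circ\NM$.

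For the identity, fix $a\in A$ and $\bar m=m+M'$. The left-hand side is
$$\N(a)\N_{M/M'}(\bar m)=\N(a)\NM(m)+M'.$$
The right-hand side is
$$\N_{M/M'}\big(\N(a)\bar m+a\N_{M/M'}(\bar m)-\N_{M/M'}(a\bar m)\big)=\NM\big(\N(a)m+a\NM(m)-\NM(am)\big)+M',$$
obtained by applying $\pi\circ\NM=\N_{M/M'}\circ\pi$ and the $A$-linearity of $\pi$ repeatedly. The two sides are equal because \eqref{eq:lnm} holds in $M$ and we then apply $\pi$.

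The only step with any content is well-definedness, which is exactly where the stability condition $\N_{M_1}(M_1)\subseteq M_1$ in the definition of submodule is needed. Everything else is a direct transfer of the identity along the surjection $\pi$. As a byproduct, $\pi$ becomes a left $(A,\N)$-module homomorphism.
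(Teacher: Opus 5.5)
Your proposal is correct and follows essentially the same route as the paper: well-definedness of $\N_{M/M'}$ from $\NM(M')=\N_{M'}(M')\subseteq M'$, then transferring the Nijenhuis identity from $M$ to $M/M'$ through the projection. You also check explicitly that $M/M'$ is an $A$-module and that $\pi$ becomes a homomorphism; the paper leaves both points implicit.
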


\begin{proof}
For $m_1+M', m_2+M' \in M/M'$, if $m_1+M'=m_2+M'$, then
$m_1-m_2 \in M'$ and note that  $\NM(M')=\N_{M'}(M') \subseteq M'$, hence $\NM(m_1)-\NM(m_2)=\NM(m_1-m_2) \in M'$, i.e., $\NM(m_1)+M'=\NM(m_2)+M'$. Thus $\N_{M/M'}$ is a well-defined map. Next we show that $\N_{M/M'}$ satisfies Eq.~(\mref{eq:lnm}).
For any $a\in A$, $m\in M$, we have
\begin{align*}
&\  \N(a) \N_{M/M'}(m+M') =\N(a)(\NM(m)+M') = \N(a)\NM(m) + M' \\
=&\ \NM(\N(a)m)+\NM(a \NM(m))-\NM(\NM(am)) + M' \\
=&\ (\NM(\N(a)m)+M') + (\NM(a \NM(m))+M') - (\NM (\NM(am)) + M') \\
=&\ \N_{M/M'}(\N(a)(m+M'))+\N_{M/M'}(a\N_{M/M'}(m+M'))- \N_{M/M'}(\N_{M/M'}(a(m+M'))),
\end{align*}
as required.
\end{proof}

\begin{remark}
Let $(\A,\N)$ be a Nijenhuis algebra. A left ideal $L$ of $A$ is called a {\bf left Nijenhuis ideal} if $\N(L) \subseteq L$. Then a left Nijenhuis ideal $(L,\N|_L)$ is a left $(A,\N)$-module. By Lemma~\mref{lem:quotmod}, $(A/L, \N_{A/L})$ is also a left $(A,\N)$-module, where
\[
\N_{A/L}: A/L \rightarrow A/L, \, a+L \mapsto \N(a)+L.
\]
\end{remark}

\begin{prop}\mlabel{prop:kernel}
Let $(A,\N)$ be a Nijenhuis algebra. Let $f: (M,\N_M) \to (M',\N_{M'})$ be a left $(A,\N)$-module homomorphism. Then 
\begin{enumerate}
\item \mlabel{it:1} The pair $(\Ker f, \N_{M}|_{\Ker f})$ is a left $(A,\N)$-submodule of $(M,\N_M)$ and $(M/\Ker f, \N_{M/\Ker f})$ is a left $(A,\N)$-module where 
\[
\N_{M/\Ker f}: M/\Ker f \to M/\Ker f, \ m+\Ker f \mapsto \N_M(m)+\Ker f.
\]
\item \mlabel{it:2}  The pair  $(\IIm f, \N_{M'}|_{\IIm f})$ is a left $(A,\N)$-submodule of $(M',\N_{M'})$ and $(M'/\IIm f, \N_{M'/\IIm f})$ is a left $(A,\N)$-module where
\[
\N_{M'/\IIm f}: M'/\IIm f \to M'/\IIm f, \ m'+\IIm f \mapsto \N_{M'}(m')+\IIm f.
\]

\item \mlabel{it:3} The left $(A,\N)$-modules $(M/\Ker f, \N_{M/\Ker f})$ and $(\IIm f, \N_{M'}|_{\IIm f})$ are isomorphic.
\end{enumerate}
\end{prop}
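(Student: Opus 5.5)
The plan is to reduce everything to the ordinary first isomorphism theorem for left $A$-modules, plus Lemma~\mref{lem:quotmod}. The only genuinely new things to check are that the relevant submodules are stable under the operators, and that the induced bijection intertwines the operators.

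For item~(\mref{it:1}), recall that $\Ker f$ is already a left $A$-submodule of $M$. It remains to show $\N_M(\Ker f)\subseteq \Ker f$. If $f(m)=0$, then
\[
f(\N_M(m))=\N_{M'}(f(m))=\N_{M'}(0)=0,
\]
using $f\circ \N_M=\N_{M'}\circ f$ and linearity of $\N_{M'}$. The restriction $\N_M|_{\Ker f}$ then satisfies Eq.~(\mref{eq:lnm}), since that identity holds in $M$ and all terms stay in $\Ker f$. So $(\Ker f,\N_M|_{\Ker f})$ is a submodule, and Lemma~\mref{lem:quotmod} applied to it gives the quotient module structure with the stated operator.

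Item~(\mref{it:2}) goes the same way. $\IIm f$ is an $A$-submodule of $M'$, and for $m'=f(m)$ we have $\N_{M'}(m')=f(\N_M(m))\in \IIm f$. The identity Eq.~(\mref{eq:lnm}) restricts as before, and Lemma~\mref{lem:quotmod} again yields the quotient $(M'/\IIm f,\N_{M'/\IIm f})$.

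For item~(\mref{it:3}), define
\[
\bar f: M/\Ker f\to \IIm f,\qquad m+\Ker f\mapsto f(m).
\]
By the classical first isomorphism theorem, $\bar f$ is a well-defined bijective left $A$-module homomorphism. The one remaining check is compatibility with the operators:
\[
\bar f\big(\N_{M/\Ker f}(m+\Ker f)\big)=\bar f(\N_M(m)+\Ker f)=f(\N_M(m))=\N_{M'}(f(m))=\N_{M'}|_{\IIm f}\big(\bar f(m+\Ker f)\big).
\]
Hence $\bar f$ is an isomorphism of left $(A,\N)$-modules. There is no real obstacle here; the only point needing care is the operator-stability of $\Ker f$ and $\IIm f$, which is exactly what makes Lemma~\mref{lem:quotmod} applicable.
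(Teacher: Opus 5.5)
Your proposal is correct and follows essentially the same route as the paper. The shared steps are: checking stability of $\Ker f$ and $\IIm f$ under the operators via $f\circ \N_M=\N_{M'}\circ f$, applying Lemma~\ref{lem:quotmod} to get the quotients, and verifying that the canonical $A$-module isomorphism $\bar f$ intertwines $\N_{M/\Ker f}$ with $\N_{M'}|_{\IIm f}$. Your remark that the restricted operator still satisfies Eq.~(\ref{eq:lnm}) is a small but welcome extra point of care that the paper leaves implicit.
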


\begin{proof} 

(\mref{it:1}) Recall that  $\Ker f$ is a left $A$-submodule of $M$. To verify that $(\Ker f, \N_{M}|_{\Ker f})$ is a left $(A,\N)$-submodule of $(M,\N_M)$, we only need to show that $\N_M(\Ker f) \subseteq \Ker f$. For any $m \in \Ker f$,
\[
f(\N_M(m))=\N_{M'}(f(m))=\N_{M'}(0)=0,
\]
thus $\N_M(m) \in \Ker f$ and $\N_M(\Ker f) \subseteq \Ker f$ as desired. By Lemma~\mref{lem:quotmod}, $(M/\Ker f, \N_{M/\Ker f})$ is a left $(A,\N)$-module.

(\mref{it:2})  Similarly, $\IIm f$ is a left $A$-submodule of $M'$. To prove that $(\IIm f, \N_{M'}|_{\IIm f})$ is a left $(A,\N)$-submodule of $(M',\N_{M'})$, We only need to confirm that $\N_{M'}(\IIm f) \subseteq \IIm f$. For any $m' \in \IIm f$, assume $f(m)=m'$ with $m \in M$. It follows that:
\[
\N_{M'}(m')=\N_{M'}(f(m))=f(\N_M(m)).
\]
Thus $\N_{M'}(m') \in \IIm f$, which means $\N_{M'}(\IIm f) \subseteq \IIm f$. Again by Lemma~\mref{lem:quotmod}, $(M'/\IIm f, \N_{M'/\IIm f})$ is indeed  a left $(A,\N)$-module. 

(\mref{it:3}) The canonical map $\overline{f}:M/\Ker f \to \IIm f, m+\Ker f \mapsto f(m)$ is an isomorphism of left A-modules. Furthermore, for any coset $m+\Ker f\in  M/\Ker f,$ we compute:
\[
\overline{f}(\N_{M/\Ker f}(m+\Ker f))=\overline{f}(\N_M(m)+\Ker f)=f(\N_M(m))=\N_{M'}(f(m))=\N_{M'} (\overline{f}(m+\Ker f)).
\]
This shows that $\overline{f} \circ \N_{M/\Ker f}= \N_{M'}|_{\IIm f} \circ \overline{f}$, so $\overline{f}$ is an isomorphism of left $(A,\N)$-modules.
\end{proof}

Given two left $(A,\N)$-modules $(M,\NM)$ and $(M',\N_{M'})$, let $\Hom_{(A,\N)}((M,\NM), (M',\N_{M'}))$ denote the set of $(A,\N)$-homomorphisms from $(M,\NM)$ to $(M',\N_{M'})$. Then we have

\begin{prop}\mlabel{prop:hom}
Let $(A,\N)$ be a Nijenhuis algebra and let $(M,\NM), (M',\N_{M'})$ be two left $(A,\N)$-modules. Then $\Hom_{(A,\N)}((M,\NM),(M',\N_{M'}))$ forms an abelian subgroup of $\Hom_A(M,M')$.
\end{prop}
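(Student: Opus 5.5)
We prove the statement with the subgroup criterion inside the abelian group $\Hom_A(M,M')$, whose operation is pointwise addition $(f+g)(m)=f(m)+g(m)$ and whose zero is the zero map. By definition, every left $(A,\N)$-module homomorphism is in particular a left $A$-module homomorphism, so $\Hom_{(A,\N)}((M,\NM),(M',\N_{M'}))$ is a subset of $\Hom_A(M,M')$. It therefore suffices to check three things: this subset contains the zero map, it is closed under differences, and the induced group is abelian.

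First, the zero map $0:M\to M'$ is $A$-linear. It also satisfies $0\circ\NM=0=\N_{M'}\circ 0$, because $\N_{M'}$ is linear and so $\N_{M'}(0)=0$. Hence the zero map belongs to the subset, which is therefore nonempty.

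Next, take $f,g\in\Hom_{(A,\N)}((M,\NM),(M',\N_{M'}))$. Since $\Hom_A(M,M')$ is a group, $f-g$ is $A$-linear. For any $m\in M$, the linearity of $\N_{M'}$ together with $f\circ\NM=\N_{M'}\circ f$ and $g\circ\NM=\N_{M'}\circ g$ gives
\begin{align*}
(f-g)(\NM(m))=f(\NM(m))-g(\NM(m))=\N_{M'}(f(m))-\N_{M'}(g(m))=\N_{M'}\big((f-g)(m)\big).
\end{align*}
Thus $(f-g)\circ\NM=\N_{M'}\circ(f-g)$, so $f-g$ lies in the subset. The subgroup criterion now shows that the subset is a subgroup, and it is abelian because $\Hom_A(M,M')$ is.

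No step is a genuine obstacle. The only point to keep in view is that $\NM$ and $\N_{M'}$ are assumed to be linear, since this linearity is what makes the compatibility condition $f\circ\NM=\N_{M'}\circ f$ preserved under subtraction.
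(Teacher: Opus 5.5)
Your proof is correct and follows the paper's argument essentially step for step. Both show that the zero map lies in the subset and that $f-g$ intertwines $N_M$ and $N_{M'}$ by linearity of $N_{M'}$, then apply the subgroup criterion inside the abelian group $\Hom_A(M,M')$.
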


\begin{proof}
 First, it is clear that  $\Hom_{(A,\N)}((M,\NM),(M',\N_{M'})) \subseteq \Hom_A(M,M')$ and the zero map is a left $(A,\N)$-homomorphism from $(M,\NM)$ to $(M',\N_{M'})$ (hence belongs to this subset). Let $f,g \in \Hom_{(A,\N)}((M,\NM),(M',\N_{M'}))$  and take an arbitrary element  $m \in M$. We have
\begin{align*}
&\ ((f-g) \circ \NM)(m)=(f-g)(\NM(m))=f(\NM(m))-g(\NM(m))\\
=&\ \N_{M'}(f(m))-\N_{M'}(g(m))=\N_{M'}(f(m)-g(m))=(\N_{M'} \circ (f-g))(m),
\end{align*}
which implies $f - g \in \Hom_{(A,\N)}((M,\N_M), (M',\N_{M'}))$. Therefore, $\Hom_{(A,\N)}((M,\N_M), (M',\N_{M'}))$ is closed under subtraction and contains the identity element (zero map) of $\Hom_A(M,M')$. Since $\Hom_A(M,M')$ is an abelian group under pointwise addition, it follows that $\Hom_{(A,\N)}((M,\N_M), (M',\N_{M'}))$ is an abelian subgroup of $\Hom_A(M,M')$. 
\end{proof} 

\begin{remark}
Combining Example~\mref{exam:njm} with Propositions~\mref{prop:kernel} and \mref{prop:hom}, we conclude that the category $_{(A,\N)}{\bf Mod}$ is an abelian category. Note that every $A$-module $M$ is an $(A,\N)$-module $(M,0)$ (with the zero Nijenhuis operator), which defines a full embedding of the category $_A {\bf Mod}$ (of left $A$-modules) into $_{(A,\N)}{\bf Mod}$. Moreover, composing this embedding with the forgetful functor $_{(A,\N)}{\bf Mod} \to  _A {\bf Mod}$ yields the identity functor on $_A {\bf Mod}$.
\end{remark} 

Recall~\mcite{Das24} that for a Nijenhuis algebra $(\A,\N)$, the underlying space $A$ inherites a new associative algebra structure with the multiplication 
\[
a \cdot_N b:=N(a)b+aN(b)-N(ab), \, \text{for $a,b \in \A$.}
\]
Denote this new algebra by $A_N$. Moreover, we have 

\begin{prop}~\mcite{CGM}
Let $(\A,\N)$ be a Nijenhuis algebra. Then

\begin{enumerate}
\item For each $k \geq 0$, $(A,\N^k)$ is also a Nijenhuis algebra.
\item For any $k, \ell \geq 0$, $(A_{N^k}, \N^{\ell})$ is also a Nijenhuis algebra.
\end{enumerate}
\end{prop}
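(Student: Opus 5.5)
We prove both parts from one tool: the Nijenhuis torsion of $N$,
\[
T_N(a,b):=N(a)N(b)-N\big(N(a)b+aN(b)\big)+N^2(ab),
\]
which vanishes by hypothesis. Write $a\cdot_{N} b:=N(a)b+aN(b)-N(ab)$ as above. The Nijenhuis identity then says $N(a\cdot_N b)=N(a)N(b)$, so $N:A_N\to A$ is an algebra homomorphism. We will repeatedly use the fact that $N$ commutes with its own powers.

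For part (1), we argue by induction on $k$; the cases $k=0,1$ are trivial. The key step is the identity, valid for all $k\geq 1$,
\begin{align}
N^k(a)\,b+a\,N^k(b)-N^k(ab)=\sum_{i=0}^{k-1} N^{i}\big(N^{k-1-i}(a)\cdot_N b\big) - (\text{correction terms}),\notag
\end{align}
or, more cleanly, the classical statement that $N^k(a)N^k(b)=N^k\big(N^k(a)b+aN^k(b)-N^k(ab)\big)$. We would prove the cleaner statement by first establishing the two-index relation
\[
N^k(a)N^\ell(b)=N^\ell\big(N^k(a)b\big)+N^k\big(aN^\ell(b)\big)-N^{k+\ell}(ab).
\]
This relation is proved by induction on $k$ and $\ell$. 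The base case $k=\ell=1$ is exactly the Nijenhuis identity. For the inductive step $(k,\ell)\to(k+1,\ell)$, we write $N^{k+1}(a)=N^k(N(a))$ and apply the induction hypothesis with $a$ replaced by $N(a)$. This produces the term $N^{k}\big(N(a)N^\ell(b)\big)$. We then expand $N(a)N^\ell(b)$ using the case $(1,\ell)$, and the resulting terms telescope. The case $(1,\ell)$ itself is obtained by induction on $\ell$, by symmetric reasoning from the base identity.

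Setting $\ell=k$ in the two-index relation gives exactly that $N^k$ is a Nijenhuis operator. The main obstacle is the bookkeeping in the telescoping step: after substitution one must check that the terms $N^{k+\ell+1}(ab)$ and the mixed terms cancel correctly. We would handle this by carefully expanding $N^k(N(a)N^\ell(b))$ and comparing it term by term.

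For part (2), we use the same two-index relation to show that $N^\ell$ satisfies the Nijenhuis identity with respect to $\cdot_{N^k}$. The cleanest route is through the torsion formalism. For commuting operators, the deformed product satisfies
\[
N^\ell(a\cdot_{N^k}b)=N^\ell(a)\cdot_{N^k} b + a\cdot_{N^k}N^\ell(b)-\big(\text{the }N^\ell\text{-deformation}\big).
\]
Concretely, we would verify
\[
N^\ell(a)\cdot_{N^k}N^\ell(b)=N^\ell\big(N^\ell(a)\cdot_{N^k}b+a\cdot_{N^k}N^\ell(b)-N^\ell(a\cdot_{N^k}b)\big)
\]
by expanding each $\cdot_{N^k}$ into its three terms. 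This gives nine terms on each side. Each product of the form $N^p(x)N^q(y)$ appearing there is rewritten via the two-index relation, which moves all the operators outside. After this rewriting, both sides become linear combinations of terms $N^r(N^s(a)N^t(b))$, or more simply of $N^{r}(N^s(a)b)$, $N^r(aN^s(b))$ and $N^r(ab)$, and we would check that the coefficients match. Alternatively, part (2) follows from part (1) applied to the Nijenhuis algebra $(A,N^{\gcd})$, together with the general fact that for a Nijenhuis operator $N$ the polynomials in $N$ are pairwise compatible Nijenhuis operators. We expect this coefficient matching to be the laborious but routine step.
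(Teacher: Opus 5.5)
The paper gives no proof of this proposition; it is quoted from [CGM]. Your argument therefore has to stand on its own, and it does. The two-index relation
\[
N^k(a)N^\ell(b)=N^\ell\big(N^k(a)b\big)+N^k\big(aN^\ell(b)\big)-N^{k+\ell}(ab)
\]
holds, and both of your inductions go through as described:
\begin{itemize}
\item for $(1,\ell)\to(1,\ell+1)$, apply the Nijenhuis identity to $N(a)N\big(N^\ell(b)\big)$ and then the hypothesis inside $N$;
\item for $(k,\ell)\to(k+1,\ell)$, apply the hypothesis to $N^k\big(N(a)\big)N^\ell(b)$ and then the case $(1,\ell)$ inside $N^k$.
\end{itemize}
Each time exactly one pair of terms cancels, and setting $\ell=k$ gives (1).

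For (2), the coefficient check you defer does succeed. After rewriting with the two-index relation, both $N^\ell(a)\cdot_{N^k}N^\ell(b)$ and $N^\ell\big(N^\ell(a)\cdot_{N^k}b+a\cdot_{N^k}N^\ell(b)-N^\ell(a\cdot_{N^k}b)\big)$ collapse to $N^\ell\big(N^{k+\ell}(a)b+aN^{k+\ell}(b)-N^{k+\ell}(ab)\big)$. You should write this out rather than call it routine. You should also note that $A_{N^k}$ is associative, which follows from (1) and the fact recalled just before the proposition. The first display, with unspecified ``correction terms'', should simply be deleted.

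Your ``alternative'' for (2) is not an argument as stated. Applying (1) to $N^d$ with $d=\gcd(k,\ell)$ only yields statements about powers of $N^d$ acting on $A$ with its original product, never about $A_{N^k}$. You also give no lemma turning compatibility into the Nijenhuis property on a deformed product.

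The property that actually drives (2) is commutation, and it gives a shorter proof. Let $R$ be a Nijenhuis operator on $A$ and $P$ any linear map with $PR=RP$. Expand $R(a)\cdot_P R(b)-R\big(R(a)\cdot_P b+a\cdot_P R(b)-R(a\cdot_P b)\big)$. Then apply the Nijenhuis identity of $R$ to $PR(a)\,R(b)=R\big(P(a)\big)R(b)$, to $R(a)R\big(P(b)\big)$, and to $R(a)R(b)$ inside $P$. The resulting eighteen terms cancel in pairs. With $R=N^\ell$ and $P=N^k$, this gives (2) directly from (1), so the two-index relation is needed only for part (1).
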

Now for a left Nijenhuis module $(M,\NM)$ of $(\A,\N)$, define a new action of $A$ on $M$ by 
\[
a \ast_N m :=\N(a)m+a \NM(m)-\NM(am), \, \text{for $a \in A$ and $m \in M$}.
\]
Then similar to the above, we get that $M$ is a left $A_N$-module. Denote it by $M_{\NM}$. Moreover, we also have 
\begin{prop}
Let $(A,\N)$ be a Nijenhuis algebra and $(M,\NM)$ a left Nijenhuis module of $(A,\N)$. Then
\begin{enumerate}
\item For any $k \geq 0$, $(M,\NM^k)$ is a left $(A,\N^k)$-module.
\item For any $k, \ell \geq 0$, $(M_{\NM^k}, \NM^\ell)$ is a left $(A_{\N^k}, \N^{\ell})$-module.
\end{enumerate}
\end{prop}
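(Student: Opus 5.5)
The plan is to reduce both statements to the cited result of \mcite{CGM} on powers of Nijenhuis operators, using a semidirect-product trick. Given a left $A$-module $M$, I would equip $A\oplus M$ with the product
\[
(a,m)(b,n):=(ab,\,an),
\]
which is associative because $((a,m)(b,n))(c,p)=(abc,\,abp)=(a,m)((b,n)(c,p))$. I would also define $\hat\N:=\N\oplus\NM$ on $A\oplus M$.

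\emph{Key lemma.} $(M,\NM)$ is a left $(A,\N)$-module if and only if $(A\oplus M,\hat\N)$ is a Nijenhuis algebra. To see this, expand both sides of the Nijenhuis identity for $(a,m),(b,n)$.
\begin{itemize}
\item The left side is $(\N(a)\N(b),\ \N(a)\NM(n))$.
\item The right side has first component $\N(\N(a)b+a\N(b)-\N(ab))$ and second component $\NM(\N(a)n+a\NM(n)-\NM(an))$.
\end{itemize}
The first components agree because $(A,\N)$ is Nijenhuis. Equality of the second components, for all $a$ and $n$, is exactly Eq.~(\mref{eq:lnm}). Both directions follow, since $m$ and $b$ play no role in the $M$-component.

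\emph{Part (1).} By the key lemma, $(A\oplus M,\hat\N)$ is a Nijenhuis algebra. By part (1) of the proposition from \mcite{CGM} quoted above, $(A\oplus M,\hat\N^k)$ is also Nijenhuis. Since $\hat\N^k=\N^k\oplus\NM^k$, applying the key lemma in reverse (with $\N^k$ in place of $\N$) shows that $(M,\NM^k)$ is a left $(A,\N^k)$-module.

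\emph{Part (2).} First I would compute the deformed product on $(A\oplus M)_{\hat\N^k}$. Its $A$-component is $a\cdot_{\N^k}b$. Its $M$-component is
\[
\N^k(a)n+a\NM^k(n)-\NM^k(an)=a\ast_{\N^k}n,
\]
where $\ast_{\N^k}$ is the action defining $M_{\NM^k}$. So $(A\oplus M)_{\hat\N^k}$ is precisely the semidirect product $A_{\N^k}\oplus M_{\NM^k}$ built from the action $\ast_{\N^k}$. Its associativity, which follows from the quoted fact that $(A\oplus M)_{\hat\N^k}$ is an associative algebra, also re-proves that $M_{\NM^k}$ is a left $A_{\N^k}$-module. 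Part (2) of the proposition from \mcite{CGM} then says that $\hat\N^\ell=\N^\ell\oplus\NM^\ell$ is a Nijenhuis operator on this algebra. The reverse direction of the key lemma, applied over $(A_{\N^k},\N^\ell)$, gives that $(M_{\NM^k},\NM^\ell)$ is a left $(A_{\N^k},\N^\ell)$-module.

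\emph{Main obstacle.} The semidirect product $A\oplus M$ is not unital. I must therefore check that the results of \mcite{CGM} invoked here (Nijenhuis property of $\N^k$, and of $\N^\ell$ on $A_{\N^k}$) use only associativity and the Nijenhuis identity, not a unit. I expect this to hold, since those proofs are formal identity manipulations. If it were in doubt, my fallback would be to prove the module statements directly by induction on $k$ and $\ell$, mimicking the algebra proofs with the module identity (\mref{eq:lnm}). That route is routine but longer.
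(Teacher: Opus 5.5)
The paper gives no proof of this proposition. It is stated, like the preceding claim that $M_{\NM}$ is a left $A_\N$-module, only as the module analogue of the result quoted from \mcite{CGM}. The implicit intended argument is therefore the direct one you keep as a fallback: rerun the algebra-level computations with Eq.~(\mref{eq:lnm}) in place of the Nijenhuis identity.

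Your semidirect-product reduction is correct and genuinely different. The key lemma is right: the $M$-component of the Nijenhuis identity for $\hat\N$ on $A\oplus M$ is exactly Eq.~(\mref{eq:lnm}), and it does not involve $m$ or $b$. The identification of $(A\oplus M)_{\hat\N^k}$ with the semidirect product of $A_{\N^k}$ and $M_{\NM^k}$ is also right. What your approach buys is that both parts come out of the algebra-level statements with no new computation. So does the fact, only asserted in the paper, that $M_{\NM^k}$ is a left $A_{\N^k}$-module. The direct route buys only independence from \mcite{CGM}.

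The obstacle you flag is harmless. The facts you use are all derived purely by expanding the Nijenhuis identity and never use a unit:
\begin{enumerate}
\item $\hat\N^k$ is Nijenhuis on $A\oplus M$;
\item $\hat\N^\ell$ is Nijenhuis for the product $\cdot_{\hat\N^k}$;
\item $\cdot_{\hat\N^k}$ is associative, which needs only the associativity of $A\oplus M$.
\end{enumerate}

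If you prefer not to audit this, unitize. For $B=A\oplus M$, set $B^+=\bfk\oplus B$ and $\tilde\N(\lambda,x)=(0,\hat\N(x))$.
\begin{enumerate}
\item The Nijenhuis identity for $\tilde\N$ reduces to that for $\hat\N$, because the terms $\mu\hat\N(x)$ and $\lambda\hat\N(y)$ cancel.
\item $\tilde\N^k$ is the zero extension of $\hat\N^k$.
\item In $(B^+)_{\tilde\N^k}$ the summand $\bfk$ multiplies trivially, and $B_{\hat\N^k}$ is a $\tilde\N^\ell$-stable subalgebra.
\end{enumerate}
So the unital statements for $(B^+,\tilde\N)$ restrict to exactly what you use.

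Non-unital algebras are unavoidable here anyway. $A_{\N^k}$ is in general non-unital; for $\N=0$ its product is zero. So ``left $A_{\N^k}$-module'' in part (2) can only mean an associative bilinear action, which is precisely what your associativity argument supplies.
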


\subsection{Free operated modules and free Nijenhuis modules}

We recall from~\mcite{Guop} that an {\bf operated algebra} is an algebra $A$ equipped with a linear operator $P:A\to A$.

\begin{defn}\mcite{QGG}
Let $(A, P)$ be an operated algebra.
\begin{enumerate}
\item A {\bf left operated $A$-module} is a pair $(M,P_M)$ consisting of a left $A$-module $M$ and a linear map $P_M:M \to M$.

\item Let $(M_1,P_{M_1})$ and $(M_2,P_{M_2})$ be two left operated $A$-modules. A {\bf left operated $A$-module homomorphism} from $(M_1,P_{M_1})$ to $(M_2,P_{M_2})$ is a left $A$-module homomorphism $f:M_1 \to M_2$ such that $f\circ P_{M_1} = P_{M_2} \circ f$.
\end{enumerate}
\end{defn}

For example, left Rota-Baxter modules in~\mcite{QGG} and left Nijenhuis modules are left operated modules. Now we recall the concept of free left operated modules and the construction of free left operated modules in~\mcite{QGG}.

\begin{defn}
Let $(A,P)$ be an operated algebra and X a set. A left operated $A$-module $(M(X), P_X)$ together with a set embedding $j_X: X \to M(X)$ is {\bf the free left operated $A$-module generated by $X$} if for any left operated $A$-module $(M,P_M)$ and any set map $f:X \to M$, there is a unique left operated $A$-module homomorphism $\overline{f}: M(X) \to M$ such that the following diagram commutes  
$$
\xymatrix{
X\ar@{^{(}->}[rr]^{j_X} \ar^{f}[drr]&&(M(X),P_X)\ar@{-->}^{\overline{f}}[d]&\\
&&(M,P_M). }
$$
\end{defn}

The free left operated $A$-module $(M(X),P_X)$ is constructed as follows~\mcite{QGG}. Denote
\begin{align*}
M(X):= AX \oplus (A\ot A)X \oplus \cdots = \oplus_{n\geq 1}(A^{\otimes n}X)= (\oplus_{n\geq 1}A^{\otimes n})X,
\end{align*}
where $A^{\ot n}X=A^{\ot n}\ot \bfk X$ with $\bfk X$ being the free $\bfk$-module on $X$. The left $A$-module structure of $M(X)$ is given by the action of $A$ on the left most tensor factor of $A^{\ot n}X$. Define a linear operator $P_X: M(X)\to M(X)$ by 
$$(a_1\ot \cdots \ot a_n)x\mapsto (1_A\ot a_1\ot \cdots \ot a_n) x, \ \text{for }  a_1, \cdots, a_n\in A, x\in X$$
and extend it by additivity. Define the map $j_X: X \to M(X), x \mapsto 1_A \ot x$, then by Proposition~2.10 in~\mcite{QGG}, we have
\begin{prop}
The pair $(M(X),P_X)$ with the map $j_X: X \to M(X)$ is the free left operated $(A,P)$-module generated by $X$. \mlabel{prop:freeo}
\end{prop}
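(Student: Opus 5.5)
We verify the three parts of the definition directly for the construction $M(X)=\bigoplus_{n\geq 1}A^{\ot n}X$. First, $M(X)$ is a left $A$-module, with $A$ acting on the leftmost tensor factor. $P_X$ is a well-defined linear map because it is specified on the $\bfk$-spanning set of pure tensors $(a_1\ot\cdots\ot a_n)x$ by a formula that is multilinear in the $a_i$. So it comes from the linear map $A^{\ot n}\ot \bfk X\to A^{\ot (n+1)}\ot\bfk X$, $u\ot x\mapsto 1_A\ot u\ot x$. Hence $(M(X),P_X)$ is a left operated $A$-module, and $j_X$ is a set map.

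For existence, fix a left operated $A$-module $(M,P_M)$ and a set map $f:X\to M$. We define $\overline{f}$ recursively on the degree $n$. On $A^{\ot 1}X=A\ot\bfk X$ we set $\overline{f}(a\ot x)=af(x)$. For $n\geq 2$ we set
\[
\overline{f}\big((a_1\ot a_2\ot\cdots\ot a_n)x\big)=a_1P_M\big(\overline{f}((a_2\ot\cdots\ot a_n)x)\big).
\]
By induction on $n$, the right-hand side is $\bfk$-multilinear in $(a_1,\dots,a_n)$ and in $x$. It therefore descends to a linear map on $A^{\ot n}\ot\bfk X$, and we extend additively over the direct sum. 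We then check three things:
\begin{itemize}
\item[(i)] $\overline{f}(a\cdot w)=a\overline{f}(w)$, which holds because $A$ acts on the first factor and $a(a_1 m)=(aa_1)m$;
\item[(ii)] $\overline{f}\circ j_X=f$, since $\overline{f}(1_A\ot x)=f(x)$;
\item[(iii)] $\overline{f}\circ P_X=P_M\circ\overline{f}$, since $\overline{f}((1_A\ot a_1\ot\cdots\ot a_n)x)=1_A P_M(\overline{f}((a_1\ot\cdots\ot a_n)x))$, using unitality of $M$.
\end{itemize}

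For uniqueness, let $g$ be any operated module homomorphism with $g\circ j_X=f$. Every pure tensor decomposes as
\[
(a_1\ot\cdots\ot a_n)x=a_1\,P_X\big((a_2\ot\cdots\ot a_n)x\big)
\quad\text{and}\quad
(a)x=a\,j_X(x).
\]
Hence $g$ is forced, by induction on $n$, to satisfy the same recursion as $\overline{f}$. So $g=\overline{f}$ on a spanning set, and therefore everywhere.

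The only delicate point is well-definedness of the recursive definition on tensor products, which requires the multilinearity check above. It is routine but must be done before the homomorphism properties make sense. Everything else is direct computation. This is exactly Proposition~2.10 of~\mcite{QGG}, so we may alternatively simply cite it.
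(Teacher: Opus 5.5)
Your proof is correct and takes essentially the same approach as the source: the paper gives no argument of its own and simply cites Proposition~2.10 of \mcite{QGG}, and your recursion, which unwinds to $\overline{f}((a_1\otimes\cdots\otimes a_n)x)=a_1P_M(a_2P_M(\cdots P_M(a_nf(x))\cdots))$, together with uniqueness via $(a_1\otimes\cdots\otimes a_n)x=a_1P_X((a_2\otimes\cdots\otimes a_n)x)$, is the standard verification behind that citation. The one point you leave implicit is that $j_X$ is injective, since the paper's definition asks for a set embedding; this holds because $\{1_A\otimes x\}_{x\in X}$ is an $A$-basis of $A\otimes\bfk X$ (for $A\neq 0$).
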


Now we give the concept of free Nijenhuis modules and apply free operated modules to construct free Nijenhuis modules.

\begin{defn}
Let $(A,\N)$ be a Nijenhuis algebra and $X$ a set. A {\bf free left $(A,\N)$-module generated by $X$} is a left $(A,\N)$-module $(F(X),\N_{F(X)})$ together with a set map $j: X \to F(X)$
satisfying that for any left $(A,\N)$-module $(M,\N_M)$ and any set map $f: X \to M$, there exists a unique
left $(A,\N)$-module homomorphism $\widetilde{f}: F(X) \to M$ such that $\widetilde{f} \circ j= f$.
\end{defn}


Let $I_X$ denote the left operated submodule of $M(X)$ generated by the subset
$$
\{ \N(a)P_X(m) - P_X(\N(a)m+ aP_X(m)-P_X(am))~|~a\in A, m \in M(X)\}.
$$
Denote by $M(X)/I_X$ the quotient operated module of $M(X)$ by $I_X$ and define
$$\overline{P}_X: M(X)/I_X \to M(X)/I_X,\ m+ I_X \mapsto P_X(m)+I_X$$
to be the operator on the quotient $M(X)/I_X$ induced by $P_X$. For $a \in A$ and $m \in M(x)$, we have
\begin{align*}
& \N(a) \overline{P}_X( m+ I_X)=\N(a)(P_X(m)+I_X)=\N(a)P_X(m)+I_X\\
=& P_X(\N(a)m+ a P_X(m)-P_X(am))+I_X\\
=& P_X(\N(a)m+ I_X)+ P_X(a P_X(m)+I_X)-P_X(P_X(am)+I_X)\\
=& \overline{P}_X(\N(a)(m+I_X))+\overline{P}_X(a \overline{P}_X(m+I_X))-\overline{P}_X (\overline{P}_X(a(m+I_X)) ),
\end{align*}
hence $(M(X)/I_X, \overline{P}_X)$ is a left Nijenhuis $(A,\N)$-module.

\begin{theorem}
Let $(A,\N)$ be a Nijenhuis algebra and $X$ a set.
Then $(M(X)/I_X, \overline{P}_X)$ with the natural map $j:= \pi\circ j_X: X \to M(X) \to M(X)/I_X $ is the free left $(A,\N)$-module generated by $X$.
\mlabel{thm:freem}
\end{theorem}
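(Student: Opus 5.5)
We verify the universal property directly, using the universal property of the free operated module in Proposition~\mref{prop:freeo} and passing to the quotient. The paragraph preceding the statement already shows that $(M(X)/I_X,\overline{P}_X)$ is a left $(A,\N)$-module, and $\overline{P}_X$ is well defined because $I_X$ is an operated submodule, so $P_X(I_X)\subseteq I_X$ (this is Lemma~\mref{lem:quotmod} applied in the operated setting). It remains to prove existence and uniqueness of the induced homomorphism.

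\emph{Existence.} Let $(M,\N_M)$ be a left $(A,\N)$-module and $f:X\to M$ a set map. Every left $(A,\N)$-module is in particular a left operated $A$-module. Proposition~\mref{prop:freeo} therefore gives a unique left operated $A$-module homomorphism $\overline{f}:M(X)\to M$ with $\overline{f}\circ j_X=f$.

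We next show $I_X\subseteq\Ker\overline{f}$. By Proposition~\mref{prop:kernel}, whose proof uses only $A$-linearity and compatibility with the operators, $\Ker\overline{f}$ is an operated submodule of $M(X)$. Hence it suffices to check that each generator of $I_X$ lies in it. For $a\in A$ and $m\in M(X)$, using that $\overline{f}$ is $A$-linear and satisfies $\overline{f}\circ P_X=\N_M\circ\overline{f}$, we get
\[
\overline{f}\big(\N(a)P_X(m)-P_X(\N(a)m+aP_X(m)-P_X(am))\big)=\N(a)\N_M(u)-\N_M\big(\N(a)u+a\N_M(u)-\N_M(au)\big),
\]
where $u=\overline{f}(m)$. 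The right-hand side vanishes by Eq.~(\mref{eq:lnm}) for $(M,\N_M)$.

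Consequently $\overline{f}$ factors through the quotient as $\widetilde{f}:M(X)/I_X\to M$, given by $m+I_X\mapsto\overline{f}(m)$. This map is $A$-linear and satisfies $\widetilde{f}\circ\overline{P}_X=\N_M\circ\widetilde{f}$, since $\widetilde f(P_X(m)+I_X)=\overline f(P_X(m))=\N_M(\overline f(m))$. Moreover $\widetilde{f}\circ j=\overline{f}\circ j_X=f$.

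\emph{Uniqueness.} Suppose $g:M(X)/I_X\to M$ is another $(A,\N)$-module homomorphism with $g\circ j=f$. Then $g\circ\pi$ is an operated module homomorphism $M(X)\to M$, because $\pi\circ P_X=\overline{P}_X\circ\pi$, and it satisfies $g\circ\pi\circ j_X=f$. By the uniqueness in Proposition~\mref{prop:freeo}, $g\circ\pi=\overline{f}=\widetilde{f}\circ\pi$. Since $\pi$ is surjective, $g=\widetilde{f}$.

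The only step requiring care is the inclusion $I_X\subseteq\Ker\overline{f}$. Its two ingredients are that the kernel is closed under the operator, so that checking generators suffices, and the displayed computation reducing the generators to the Nijenhuis module identity (\mref{eq:lnm}). The remaining steps are formal.
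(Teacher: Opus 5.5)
Your proof is correct and follows the same route as the paper. Both lift $f$ to the operated homomorphism $\overline{f}$ from Proposition~\ref{prop:freeo}, show that $\overline{f}$ kills the generators of $I_X$ and hence all of $I_X$ (its kernel is closed under $P_X$), and descend to $M(X)/I_X$. Your uniqueness argument, via $g\circ\pi$, the uniqueness in Proposition~\ref{prop:freeo}, and the surjectivity of $\pi$, spells out a step the paper only asserts.
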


\begin{proof}
We need to prove that for any left $(A,\N)$-module $(M,\NM)$ and any set map $f: X\to M$, there is a unique left $(A,\N)$-module homomorphism $\tilde{f}: (M(X)/I_X, \overline{P}_X) \to (M, \NM)$ such that $\tilde{f} \circ j=f$.

Note that $(A,\N)$ is also an operated algebra and $(M,\NM)$ is a left operated $A$-module. By Proposition~\mref{prop:freeo}, there is a unique left operated $A$-module homomorphism
$\overline{f}: (M(X),P_X) \to (M, \NM)$ such that $\overline{f} \circ j_X = f$,
as showing in the following diagram:
$$
\xymatrix{
X\ar^{j_X}[rr] \ar_{f}[drr] &&(M(X), P_X)\ar^{\pi}[rr]  \ar^{\overline{f}}[d]
&& (M(X)/I_X, \overline{P}_X)\ar@{-->}^{\tilde{f}}[dll] \\
&&(M,\NM).&&
}
$$
Now we show that $\overline{f}$ vanishes on the generators of $I_X$. For $a \in A$ and $m\in M(X)$, we have
\begin{align*}
& \overline{f}(\N(a)P_X(m) - P_X(\N(a)m+a P_X(m) -P_X(am)))\\
=& \N(a)\NM(\overline{f}(m)) - \NM(\N(a)\overline{f}(m)+a \NM(\overline{f}(m)) - \NM(a\overline{f}(m)))\\
=& 0.
\end{align*}
Since $\overline{f}$ vanishes on the generators of $I_X$ 
and $\overline{f}$ is a left operated $A$-module homomorphism, $\overline{f}(I_X)=0$. Thus $\overline{f}$ induces a unique left operated $A$-module homomorphism
$$\tilde{f}:(M(X)/I_X,\overline{P}_X) \to (M,\NM)$$
such that $\tilde{f}\circ \pi = \overline{f}$.

Since $(M(X)/I_X, \overline{P}_X)$ and $(M, \NM)$ are both left $(A,\N)$-modules and $\tilde{f} \circ \overline{P}_X= \NM \circ \tilde{f}$, $\tilde{f}$ is the unique left $(A,\N)$-module homomorphism such that
\begin{align*}
\tilde{f}\circ j = \overline{f}\circ \pi\circ j_X = \overline{f}\circ j_X = f,
\end{align*}
as required. 
\end{proof}

Analogous to the case of modules, we have
\begin{coro}
\begin{enumerate}
\item Every left Nijenhuis module is a quotient of a free left Nijenhui module.  \mlabel{it:quof}

\item Every finitely generated left Nijenhuis module is a quotient of a finitely generated free left Nijenhuis module. \mlabel{it:quff}
\end{enumerate}
\mlabel{coro:quof}
\end{coro}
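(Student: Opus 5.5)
The plan is to apply the universal property of Theorem~\mref{thm:freem} to a generating set of the given module. For part~(\mref{it:quof}), let $(M,\NM)$ be any left $(A,\N)$-module and take $X$ to be the underling set of $M$ (or any subset of $M$ generating $M$ as a left $(A,\N)$-module). Let $f: X\to M$ be the inclusion of sets. By Theorem~\mref{thm:freem}, there is a unique left $(A,\N)$-module homomorphism $\tilde f:(M(X)/I_X,\overline{P}_X)\to (M,\NM)$ with $\tilde f\circ j=f$.

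The next step is to check that $\tilde f$ is surjective. When $X=M$ this is immediate: every $m\in M$ equals $f(m)=\tilde f(j(m))$. When $X$ is only a generating set, observe that $\IIm\tilde f$ is a left $(A,\N)$-submodule of $(M,\NM)$ by Proposition~\mref{prop:kernel}(\mref{it:2}). It contains $f(X)=X$, so it equals $M$, since $M$ is the smallest submodule closed under the $A$-action and $\NM$ that contains $X$. Then Proposition~\mref{prop:kernel}(\mref{it:3}) gives
\[
(M,\NM)\cong \big((M(X)/I_X)/\Ker\tilde f,\ \overline{P}_{X}\text{ induced}\big),
\]
which is a quotient Nijenhuis module in the sense of Lemma~\mref{lem:quotmod}. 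This proves~(\mref{it:quof}).

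For part~(\mref{it:quff}), suppose $(M,\NM)$ is finitely generated, meaning generated as a left $(A,\N)$-module by a finite subset $X\subseteq M$. Run the same argument with this finite $X$. The free module $(M(X)/I_X,\overline{P}_X)$ is generated by the finite set $j(X)$, because $M(X)$ is generated as an operated module by $j_X(X)$ (each $(a_1\ot\cdots\ot a_n)x$ equals $a_1P_X(a_2P_X(\cdots a_nP_X^{0}(1\ot x)))$ suitably). Hence it is a finitely generated free left Nijenhuis module, and surjectivity follows exactly as above.

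The only point needing care is the surjectivity when $X$ is merely a generating set. This rests on knowing that the image of a Nijenhuis homomorphism is closed under both the $A$-action and the operator, which is exactly Proposition~\mref{prop:kernel}(\mref{it:2}). Everything else is a direct application of the universal property.
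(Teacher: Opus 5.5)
Your proposal is correct and follows the argument the paper leaves implicit: the paper states the corollary without proof, as an immediate consequence of Theorem~\ref{thm:freem} ``analogous to the case of modules''. That argument maps the free module on the underlying set, or on a finite generating set, onto $(M,\NM)$ via the universal property and uses that the image is an $(A,\N)$-submodule containing the generators. Your further check that $M(X)/I_X$ is generated by $j(X)$, via $(a_1\ot\cdots\ot a_n)x=a_1P_X(a_2P_X(\cdots P_X(a_n j_X(x))))$, is correct and is a detail the paper does not spell out.
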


\subsection{Free modules as free Nijenhuis modules} 
In this subsection, we give a restricted condition under which a free module is a free Nijenhuis module. Then we show that a Nijenhuis algebra is a free Nijenhuis module under the restricted condtion.

Let $(A,\N)$ be a Nijenhuis algebra and $(M,\NM)$ a left $(A,\N)$-module. Define the set of {\bf module constant} of $M$ by
\begin{align*}
MC(M):=\{m\in M \mid \NM(am)=\N(a)m, \, \text{ for all } a\in A\}.
\end{align*}

\begin{defn}
Let $X$ be a set. The {\bf restricted free left $(A,\N)$-module} generated by $X$  is a left $(A,\N)$-module $(F(X), \N_{F(X)})$ together with a set map $j_X : X \rightarrow F(X)$ satisfying that for any left $(A,\N)$-module $(M,\NM)$ and any set map $f:X \to M$ with  $\IIm(f) \subseteq MC(M)$, there exists a unique left $(A,\N)$-module homomorphism $\overline{f}: (F(X), \N_{F(X)}) \to (M,\NM)$ such that $\overline{f} \circ j_X=f$.
\end{defn}

Let $\rf(X)$ be the free left $\A$-module generated by $X$:
\begin{align*}
\rf(X):=\left\{\sum_{x\in X}a_{x}x~\big|~a_{x}\in A \right\} .
\end{align*}
Define a linear operator
\begin{align*}
\N_{\rf(X)}: \rf(X) \to \rf(X),\ \sum_{x\in X}a_{x}x \mapsto \sum_{x\in X}\N(a_{x})x.
\end{align*}

\begin{theorem} 
Let $(A,\N)$ be a Nijenhuis algebra and $X$ a set. Then
\begin{enumerate}
\item $(\rf(X),\N_{\rf(X)})$ is a left $(A,\N)$-module. \mlabel{it:rrbm}

\item   $(\rf(X),\N_{\rf(X)})$ together with the natural embedding map $j_X: X\to \rf(X)$,
is the  restricted free left $(A,\N)$-module generated by $X$.
\mlabel{it:frrbm}
\end{enumerate}
\mlabel{thm:fmrbm}
\end{theorem}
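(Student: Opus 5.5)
Part (\mref{it:rrbm}) is a componentwise check. Since $\rf(X)=\bigoplus_{x\in X}Ax$ as a left $A$-module and $\N_{\rf(X)}$ acts on each summand $Ax\cong A$ as $\N$ acts on $A$, it suffices, by linearity in $m$, to verify Eq.~(\mref{eq:lnm}) for $m=bx$ with $b\in A$, $x\in X$. Then $\N_{\rf(X)}(bx)=\N(b)x$ and $\N_{\rf(X)}(abx)=\N(ab)x$, so the right-hand side of Eq.~(\mref{eq:lnm}) becomes
\[
\N_{\rf(X)}\big((\N(a)b+a\N(b)-\N(ab))x\big)=\N\big(\N(a)b+a\N(b)-\N(ab)\big)x=\N(a)\N(b)x ,
\]
using the Nijenhuis identity in $A$. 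This equals the left-hand side $\N(a)\N_{\rf(X)}(bx)$. Equivalently, $(\rf(X),\N_{\rf(X)})$ is the direct sum of copies of $(A,\N)$ indexed by $X$, generalizing Example~\mref{exam:njm}(1),(5).

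For part (\mref{it:frrbm}), first note that $j_X(X)\subseteq MC(\rf(X))$, since $\N_{\rf(X)}(ax)=\N(a)x$. This is consistent with the restricted universal property. Now let $(M,\NM)$ be a left $(A,\N)$-module and $f:X\to M$ a set map with $\IIm f\subseteq MC(M)$. Since $\rf(X)$ is the free left $A$-module on $X$, there is a unique $A$-module map $\overline{f}:\rf(X)\to M$ with $\overline{f}(\sum a_x x)=\sum a_x f(x)$. It extends $f$. Uniqueness among $(A,\N)$-homomorphisms is immediate, since any such homomorphism is in particular an $A$-module map agreeing with $f$ on the basis $X$.

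The remaining, and only substantive, step is to check that $\overline{f}$ commutes with the operators. This is exactly where the module-constant hypothesis is needed. For a generator $ax$ we compute
\[
\overline{f}(\N_{\rf(X)}(ax))=\overline{f}(\N(a)x)=\N(a)f(x)=\NM(af(x))=\NM(\overline{f}(ax)),
\]
where the third equality is precisely $f(x)\in MC(M)$. Additivity then gives $\overline{f}\circ\N_{\rf(X)}=\NM\circ\overline{f}$, which completes the proof. No real obstacle is expected; the point to be careful about is that the constant condition is used in the direction $\NM(am)=\N(a)m$ for $m=f(x)$.
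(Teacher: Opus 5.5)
Your proposal is correct and follows essentially the same route as the paper. In part (\mref{it:rrbm}) you do the same generator-wise computation, reducing Eq.~(\mref{eq:lnm}) to the Nijenhuis identity in $A$. In part (\mref{it:frrbm}) you take the $A$-linear extension $\overline{f}$ given by the free $A$-module and check $\overline{f}\circ\N_{\rf(X)}=\NM\circ\overline{f}$ on generators $ax$ via $f(x)\in MC(M)$, which is exactly what the paper does, except that you also spell out the uniqueness argument the paper leaves implicit.
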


\begin{proof}
(\mref{it:rrbm}) We need to show that $\N_{\rf(X)}$ satisfies Eq.~(\mref{eq:lnm}). For any $a\in A$ and $a_x x \in \rf(X)$, we have
\begin{align*}
& \N(a)\N_{\rf(X)}( a_{x}x) = \N(a)( \N(a_{x})x)=\N(a)\N(a_{x})x \\
=&  \N(\N (a)a_{x})x +\N(a \N(a_{x}))x -\N (\N(aa_{x}))x \\
=& \N_{\rf(X)}(\N(a) (a_x x)) + \N_{\rf(X)}(a \N_{\rf(X)}(a_x x))-\N_{\rf(X)}(\N_{\rf(X)}(a(a_x x))),
\end{align*}
as required.

(\mref{it:frrbm}) 
For any left $(A,\N)$-module $(M,\NM)$ and any set map $f: X \to M$ with $\IIm(f) \subseteq MC(M)$, we show that there exists a unique left $(A,\N)$-module homomorphism $\overline{f}: (\rf(X), \N_{\rf(X)}) \to (M, \NM)$ such that $\overline{f} \circ j_X=f$.

By the universal property of free $A$-module $\rf(X)$, there is a unique left $A$-module homomorphism
\begin{align}
\overline{f}: \rf(X) \to M, \sum_{x\in X}a_{x}x \mapsto \sum_{x\in X}a_{x}f(x) \mlabel{eq:freemodule}
\end{align}
such that $\overline{f} \circ j_X=f$. Furthermore,
\begin{align*}
& (\overline{f} \circ \N_{\rf(X)}) (a_{x}x)  = \bar{f} (\N_{\rf(X)}( a_{x}x))\\
=& \overline{f} ( \N(a_{x})x)= \N(a_{x})f(x) \quad \text{(by Eq.~(\mref{eq:freemodule}))}\\
=& \NM(a_{x}f(x)) \quad \text{(by $\IIm(f) \subseteq MC(M)$)}\\
=& \NM(\overline{f}( a_{x}x)) \quad \text{(by Eq.~(\mref{eq:freemodule}))}\\
=& (\NM \circ \overline{f})( a_{x}x),
\end{align*}
hence $\overline{f}$ is the unique required left $(A,\N)$-module homomorphism.
\end{proof}

Taking $X=\{ x\}$ to be a singleton set, we know that the map sending $a$ to $ax$ is an isomorphism from $(A,\N)$ to $(\rf(X), \N_{\rf(X)}$ as left $(A,\N)$-modules. Hence

\begin{coro}
$(A,\N)$ is the restricted free left $(A,\N)$-module generated by a singleton set.
\end{coro}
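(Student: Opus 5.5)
The plan is to deduce the corollary from Theorem~\mref{thm:fmrbm}(\mref{it:frrbm}) with $X=\{x\}$. First I will check that the map $\phi: A\to \rf(\{x\})$, $a\mapsto ax$, is an isomorphism of left $(A,\N)$-modules. It is clearly a bijective left $A$-module map, because $\rf(\{x\})$ is the free $A$-module of rank one. It also intertwines the operators, since $\N_{\rf(X)}(\phi(a))=\N_{\rf(X)}(ax)=\N(a)x=\phi(\N(a))$. Its inverse is then automatically an $(A,\N)$-module homomorphism.

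Next I will transport the universal property along $\phi$. The structure map on the $A$ side will be $j: \{x\}\to A$, $x\mapsto 1_A$, so that $\phi\circ j=j_X$. Let $(M,\NM)$ be a left $(A,\N)$-module and $f:\{x\}\to M$ a map with $f(x)\in MC(M)$. Theorem~\mref{thm:fmrbm} gives a unique $(A,\N)$-homomorphism $\overline{f}:\rf(X)\to M$ with $\overline{f}\circ j_X=f$. Then $\overline{f}\circ\phi$ is an $(A,\N)$-homomorphism satisfying $(\overline{f}\circ\phi)\circ j=\overline{f}\circ j_X=f$. Concretely it is $a\mapsto af(x)$, and the operator condition $\NM(af(x))=\N(a)f(x)$ is exactly the module-constant hypothesis.

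For uniqueness, take any $(A,\N)$-homomorphism $g:A\to M$ with $g(1_A)=f(x)$. Then $g\circ\phi^{-1}$ satisfies $(g\circ\phi^{-1})\circ j_X=f$, so it equals $\overline{f}$ by the uniqueness part of the theorem, and hence $g=\overline{f}\circ\phi$. More directly, $g(a)=a g(1_A)$ because $g$ is $A$-linear, so $g$ is determined by $f$.

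There is no real obstacle here. The only point needing care is that the embedding into $A$ is $x\mapsto 1_A$, which uses that $A$ is unital, and that the inverse of a bijective $(A,\N)$-homomorphism is again an $(A,\N)$-homomorphism.
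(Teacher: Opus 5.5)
Your proposal is correct and is essentially the paper's argument. The paper likewise observes that $a\mapsto ax$ is an isomorphism of left $(A,\N)$-modules from $(A,\N)$ onto $(\rf(\{x\}),\N_{\rf(\{x\})})$ and deduces the corollary from Theorem~\mref{thm:fmrbm}; you additionally spell out the transport of the universal property along this isomorphism, with structure map $x\mapsto 1_A$, which the paper leaves implicit.
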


\section{The ring of Nijenhuis operators and its characterizations}
\smallskip
\mlabel{sec:ringCon}

In this section, we introduce the notion of a ring of Nijenhuis operators and establish an equivalence between the modules of the ring of Nijenhuis operators and Nijenhuis modules. Moreover, the structure of the ring of Nijenhuis operators is characterized through a general construction.

\subsection{The ring of Nijenhuis operators and Nijenhuis modules} 
\mlabel{subsec:ringN}

Inspired by the ring of Rota-Baxter operators~\mcite{GL}, we define the ring of Nijenhuis operators as follows. 

\begin{defn}
Given a Nijenhuis algebra $(A,\N)$ and a polynomial algebra $ \bfk[Q]$ with variable $Q$, let $\bfk\langle A,\bfk[Q]\rangle$ be the free product (also called the coproduct) of $(A,\N)$ and $ \bfk[Q]$.  The {\bf ring of Nijenhuis operators on $(A,N)$}, denoted by $U_{\N}(A)$, is defined to be  the quotient
\[ U_{\N}(A)=\bfk\langle A,\bfk[Q]\rangle /I_{A,Q},\]
where $I_{A,Q}$ is the two-sided ideal of $\bfk\langle A, \bfk[Q]\rangle $ given by
\begin{equation}
I_{A,Q}=\langle QaQ-\N(a)Q+Q \N(a)-Q^2 a \, |\,  a\in A \rangle.
\mlabel{eq:NO}
\end{equation}

\end{defn}

Recall from~\mcite{GL} that an associative algebra $A$ equipped with a specific element $p \in A$ is called a {\bf pointed associative algebra}, denoted by $(A, p)$. A homomorphism $f: (A, p) \rightarrow (A', p')$ between such algebras is an algebra homomorphism $f: A \rightarrow A'$ that preserves the distinguished elements, i.e., $f(p) = p'$. Consequently, the pair $(U_{\N}(A), Q)$ forms a pointed associative algebra.

The definition of $U_{N}(A)$ is characterized by the following universal property. 

\begin{prop} \mlabel{prop:universal} Let $i:A\to \bfk\langle \A,\bfk[Q]\rangle$ be the natural embedding and $\pi:\bfk\langle \A,\bfk[Q]\rangle  \to U_{N}(\A)$ be the quotient homomorphism. For any pointed associative algebra $(A', p')$  and any algebra homomorphism $\phi: \A \rightarrow A'$ satisfying
\begin{equation}
\phi( \N(a))p'=p' \phi(a)p'+p' \phi( \N(a))-p'^2\phi(a), \, \text{ for all } a\in \A,
\mlabel{eq:Nuniv}
\end{equation}
there exists a unique pointed algebra homomorphism $\eta: (U_{N}(\A), Q) \rightarrow (A', p')$ such that $\phi = \eta \circ (\pi \circ i)$, which is obtained by an intermediate homomorphism $\xi: (\mathbf{k}\langle \A,\mathbf{k}[Q]\rangle, Q) \rightarrow (A', p')$ of  pointed algebras, as indicated in the following commutative diagram: 
\[\xymatrix{
\A \ar[r]^{i\quad \quad}\ar[dr]_{\phi} &\bfk\langle \A,\bfk[Q]\rangle\ar[r]^{\quad \pi}\ar[d]^{\xi}&U_N(\A)\ar[dl]^{\eta}\\
&(A',p')& 
}\]
\end{prop}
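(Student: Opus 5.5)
The plan is to use the two universal properties in turn: first the coproduct property of $\bfk\langle A,\bfk[Q]\rangle$, then the quotient property of $\pi$.

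\emph{Step 1: construct $\xi$.} Since $\bfk[Q]$ is the free commutative algebra on one variable, there is a unique algebra homomorphism $\psi:\bfk[Q]\to A'$ with $\psi(Q)=p'$. The free product $\bfk\langle A,\bfk[Q]\rangle$ is the coproduct in the category of associative algebras. Hence the pair $(\phi,\psi)$ determines a unique algebra homomorphism
\[
\xi:\bfk\langle A,\bfk[Q]\rangle\to A'
\]
with $\xi\circ i=\phi$ and $\xi(Q)=p'$. In particular, $\xi$ is a homomorphism of pointed algebras.

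\emph{Step 2: show that $\xi$ kills $I_{A,Q}$.} This is the only real computation, and the point to watch is matching the sign and ordering conventions of \eqref{eq:NO} with \eqref{eq:Nuniv}. For $a\in A$ we have
\[
\xi(QaQ-\N(a)Q+Q\N(a)-Q^2a)=p'\phi(a)p'-\phi(\N(a))p'+p'\phi(\N(a))-p'^2\phi(a).
\]
By \eqref{eq:Nuniv}, $\phi(\N(a))p'$ equals $p'\phi(a)p'+p'\phi(\N(a))-p'^2\phi(a)$, so the right-hand side vanishes. Since $\xi$ is an algebra homomorphism and $I_{A,Q}$ is the two-sided ideal generated by these elements, it follows that $\xi(I_{A,Q})=0$.

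\emph{Step 3: construct $\eta$.} Because $\xi$ vanishes on $I_{A,Q}$, it factors uniquely through the quotient: there is an algebra homomorphism $\eta:U_N(A)\to A'$ with $\eta\circ\pi=\xi$. Then
\[
\eta(Q)=\eta(\pi(Q))=\xi(Q)=p',
\]
so $\eta$ is pointed, and $\eta\circ\pi\circ i=\xi\circ i=\phi$.

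\emph{Step 4: uniqueness.} Suppose $\eta'$ is another pointed homomorphism with $\eta'\circ\pi\circ i=\phi$. Then $\eta'\circ\pi$ agrees with $\xi$ on $i(A)$ and on $Q$. These generate $\bfk\langle A,\bfk[Q]\rangle$ as an algebra, so the uniqueness part of the coproduct property gives $\eta'\circ\pi=\xi=\eta\circ\pi$. Finally, $\pi$ is surjective, so $\eta'=\eta$.
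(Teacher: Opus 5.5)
Your proposal is correct and follows the same route as the paper: build $\xi$ from the coproduct property with $Q\mapsto p'$, check via Eq.~\eqref{eq:Nuniv} that $\xi$ kills the generators $QaQ-\N(a)Q+Q\N(a)-Q^2a$ of $I_{A,Q}$, and factor through $\pi$. Your Step 4 spells out something the paper leaves implicit in the uniqueness of the factorization $\xi=\eta\circ\pi$. It shows that $\eta$ is unique among pointed homomorphisms with $\phi=\eta\circ\pi\circ i$, using that $i(A)$ and $Q$ generate the free product and that $\pi$ is surjective.
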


\begin{proof} 
Define an algebra homomorphism $\varphi: \bfk[Q] \rightarrow A'$ by sending $Q$ to $p'$. By the universal property of the free product $\bfk \langle A, \bfk[Q] \rangle$, there is a unique algebra homomorphism  $\xi:\bfk \langle \A, \bfk[Q]\rangle\rightarrow A'$ such that $\xi(a)=\phi(a)$ for all $a \in A$ and $\xi(Q)=p'$. Then
\begin{align*}
&\ \xi\Big(QaQ-\N(a)Q+Q\N(a)-Q^2 a\Big)\\
=&\ \xi(Q) \xi(a) \xi(Q)-\xi(\N(a)) \xi(Q)+\xi(Q) \xi(\N(a))-\xi(Q)^2 \xi(a) \\
=&\ p' \phi(a) p'-\phi(\N(a))p'+p'\phi(\N(a))-p'^2\phi(a)=0 \quad (\text{by Eq.~(\mref{eq:Nuniv}}). 
\end{align*}
This implies that the ideal $I_{\A, Q}$ is in the kernel of this algebra homomorphism $\xi:\bfk \langle \A, \bfk[Q]\rangle\rightarrow A'$, i.e., $\Ker \pi \subseteq \Ker \xi$. Hence we have a unique pointed algebra homomorphism $\eta:U_\N(\A) \to A'$ satisfying the property $\xi=\eta\circ \pi$. Consequently, the required property $\phi=\xi\circ i=\eta\circ(\pi \circ i)$ follows.
\end{proof}

\begin{prop}
Let $(A,N)$ be a Nijenhuis algebra, and let $U_N(A)$ be the ring of Nijenhuis operators on $(A,N).$ Then the structure of a left $(A, N)$-module is precisely equivalent to that of a left $U_N(A)$-
module extending the underlying $A$-module structure under the following correspondence.
\begin{enumerate}
    \item \mlabel{eq:ia1}
If $(M,\NM)$ is a left $(A,N)$-module, then $A$-module $M$ is also a left $U_N(A)$-module with the
action $Q\cdot m:=N_M(m), m\in M$.
    \item \mlabel{eq:ia2}
  If $M$ is a left $U_N(A)$-
module, then $(M,\NM)$ is a left $(A,N)$-module with the
operator $\NM:M\rightarrow M, m\mapsto Q\cdot m$.
\end{enumerate}
\end{prop}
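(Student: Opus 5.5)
The plan is to reduce both directions to the universal property in Proposition~\mref{prop:universal}. The key observation is that, once $Q$ acts as $\NM$, the defining generator $QaQ-\N(a)Q+Q\N(a)-Q^2a$ of $I_{A,Q}$ in Eq.~(\mref{eq:NO}) encodes exactly Eq.~(\mref{eq:lnm}). Indeed, Eq.~(\mref{eq:lnm}) can be rewritten, as an identity of operators on $M$, as
\[
\N(a)\NM=\NM\,\N(a)+\NM\, a\,\NM-\NM^2\, a,
\]
where $a$ and $\N(a)$ denote left multiplication on $M$.

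For (\mref{eq:ia1}), I would take the pointed algebra $(A',p'):=(\End_\bfk(M),\NM)$. Let $\phi:A\to \End_\bfk(M)$ be the algebra homomorphism $a\mapsto(m\mapsto am)$ giving the $A$-module structure. Condition Eq.~(\mref{eq:Nuniv}) reads
\[
\phi(\N(a))\NM=\NM\phi(a)\NM+\NM\phi(\N(a))-\NM^2\phi(a).
\]
Evaluated at $m\in M$, this is precisely Eq.~(\mref{eq:lnm}). So Proposition~\mref{prop:universal} yields a unique algebra homomorphism $\eta:U_\N(A)\to\End_\bfk(M)$ with $\eta(Q)=\NM$ and $\eta\circ\pi\circ i=\phi$. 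This $\eta$ makes $M$ a left $U_\N(A)$-module on which $Q\cdot m=\NM(m)$ and whose restriction along $\pi\circ i$ is the original $A$-action. The uniqueness in Proposition~\mref{prop:universal} shows that this is the only $U_\N(A)$-module structure extending the $A$-module structure with $Q$ acting by $\NM$.

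For (\mref{eq:ia2}), let $M$ be a left $U_\N(A)$-module. Restricting along the algebra map $\pi\circ i:A\to U_\N(A)$ gives the underlying $A$-module, and $\NM(m):=Q\cdot m$ is $\bfk$-linear. Since $\pi$ kills the generators of $I_{A,Q}$, the relation $QaQ-\N(a)Q+Q\N(a)-Q^2a=0$ holds in $U_\N(A)$. Applying this relation to $m$ and rearranging gives Eq.~(\mref{eq:lnm}), so $(M,\NM)$ is a left $(A,\N)$-module.

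Finally, I would check that the two constructions are mutually inverse. Going from $(M,\NM)$ to a $U_\N(A)$-module and back clearly returns the same $A$-action and the same operator. In the other direction, $U_\N(A)$ is generated as an algebra by $\pi i(A)$ and $Q$, since $\bfk\langle A,\bfk[Q]\rangle$ is generated by $A$ and $Q$. Hence a $U_\N(A)$-action is determined by the actions of $A$ and $Q$, and the uniqueness part of Proposition~\mref{prop:universal} also gives this. I expect no serious obstacle. The only points needing care are matching the signs in the generator of $I_{A,Q}$ with Eq.~(\mref{eq:lnm}) and noting that unitality is respected, since the unit of $A$ is identified with the unit of the free product. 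I would also remark that a $\bfk$-linear map commutes with the $A$-actions and with $\NM$ if and only if it commutes with the $U_\N(A)$-action. This will later give the isomorphism of categories.
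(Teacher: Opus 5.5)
Your proposal is correct and follows the same route as the paper. Direction (1) applies Proposition~\ref{prop:universal} to the pointed algebra $(\End_{\bfk}(M),N_M)$, with Eq.~(\ref{eq:Nuniv}) being exactly Eq.~(\ref{eq:lnm}), and direction (2) reads Eq.~(\ref{eq:lnm}) off the defining relation of $I_{A,Q}$ acting on $M$; your restriction along $\pi\circ i$ instead of the later Lemma~\ref{lem:NUsum}, and your explicit check that the two constructions are mutually inverse, are harmless refinements of the paper's argument.
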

\begin{proof}
(\mref{eq:ia1}) Given a left $(\A,\N)$-module $(M, \NM)$, we have a pointed algebra $(\End_{\bfk}(M), \NM)$. The action of $A$ on $M$ defines an algebra homomorphism $\phi: \A \to \End_{\bfk}(M)$. Moreover, by Eq.~(\mref{eq:lnm}) we know that Eq.~(\mref{eq:Nuniv}) holds where $p'=\NM$. Hence by Proposition~\mref{prop:universal}, there is a unique pointed algebra homomorphism $\eta:(U_{\N}(\A), Q) \to (\End_{\bfk}(M), \NM)$ with $\NM(m)=Q(m), m\in M$, giving $M$ a left $U_{\N}(\A)$-module structure.

(\mref{eq:ia2})  The left $U_{N}(\A)$-module structure of $M$ gives an algebra homomorphism $\eta: U_{\N}(\A) \to \End_{\bfk}(M)$. Also applying the restriction of $\eta$ to the subalgebra $\A \subseteq U_{\N}(\A)$ as showed in Lemma~\mref{lem:NUsum}, we obtain a left $\A$-module structure on $M$. Furthermore, by Eq.~\eqref{eq:Nuniv}, the linear map $\NM:= \eta(Q):M\rightarrow M$ defines a left $(\A,\N)$-module on $M$. 
 
\end{proof}  
Moreover, for left $(\A,\N)$-modules $(M, \NM)$ and $(M', \N_{M'})$, an $\A$-module homomorphism $ f: M\rightarrow M'$ is an $(\A,\N)$-module homomorphism if and only if $f$ is a $ U_{\N}(\A)$-module homomorphism. Thus we have the required isomorphism of categories as follows.    
\begin{theorem} 
The category of left $(\A,\N)$-modules is isomorphic to the category of left $U_{\N}(\A)$-modules.
\mlabel{thm:NM}
\end{theorem}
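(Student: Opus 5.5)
We will construct two functors $F: {}_{(\A,\N)}{\bf Mod} \to {}_{U_\N(\A)}{\bf Mod}$ and $G: {}_{U_\N(\A)}{\bf Mod} \to {}_{(\A,\N)}{\bf Mod}$ and check that they are mutually inverse; nothing beyond Proposition~\mref{prop:universal} and the preceding proposition on the module correspondence is needed.

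\textbf{The functors on objects.} For a left $(\A,\N)$-module $(M,\NM)$, let $F(M,\NM)$ be $M$ with the $U_\N(\A)$-action of part~(\mref{eq:ia1}) of the preceding proposition. This action is the unique pointed algebra homomorphism $\eta:(U_\N(\A),Q)\to(\End_\bfk(M),\NM)$ supplied by Proposition~\mref{prop:universal}, with $\phi:\A\to\End_\bfk(M)$ the given $\A$-action. For a left $U_\N(\A)$-module $M$, let $G(M)=(M,\NM)$, where the $\A$-action is obtained by restricting along $\pi\circ i:\A\to U_\N(\A)$ and $\NM(m)=Q\cdot m$, as in part~(\mref{eq:ia2}). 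To see that Eq.~(\mref{eq:lnm}) holds in $G(M)$, apply the action to the relation $QaQ-\N(a)Q+Q\N(a)-Q^2a=0$ in $U_\N(\A)$ and evaluate at $m$:
\[
\N(a)\NM(m)=\NM(am)\cdot 0+\NM(a\NM(m))+\NM(\N(a)m)-\NM(\NM(am)).
\]
Here I am reading the relation as $\N(a)Q = QaQ+Q\N(a)-Q^2a$, so the stray term should simply be absent; this gives exactly Eq.~(\mref{eq:lnm}).

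\textbf{Mutual inverseness on objects.} Since $U_\N(\A)$ is generated as an algebra by the image of $\A$ together with $Q$, any $U_\N(\A)$-action is determined by the induced $\A$-action and the operator $m\mapsto Q\cdot m$. Hence $FG=\id$. Conversely, $GF(M,\NM)$ recovers the original $\A$-action, because $\eta\circ\pi\circ i=\phi$, and it recovers $\NM$, because $\eta(Q)=\NM$. So $GF=\id$ on objects.

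\textbf{Morphisms.} Both functors are the identity on underlying maps, so it remains to check that the Hom-sets agree. Let $f:M\to M'$ be $\bfk$-linear. If $f$ is $U_\N(\A)$-linear, then it commutes with the action of $\A$ and with the action of $Q$, i.e.\ $f\circ\NM=\N_{M'}\circ f$; thus $f$ is an $(\A,\N)$-homomorphism. Conversely, if $f$ is $\A$-linear and intertwines $\NM$ with $\N_{M'}$, then the set
\[
\{u\in U_\N(\A)\mid f(um)=uf(m)\ \text{for all}\ m\in M\}
\]
is a subalgebra of $U_\N(\A)$ containing the generators $\pi(i(\A))$ and $Q$, so it is all of $U_\N(\A)$. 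Functoriality, namely preservation of identities and composition, is then trivial.

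\textbf{Main obstacle.} The only subtle point is the generation claim: that $U_\N(\A)$ is generated by $\pi(i(\A))$ and $Q$, so that actions and intertwiners are determined by these generators. This follows because the free product $\bfk\langle \A,\bfk[Q]\rangle$ is spanned by alternating words in elements of $\A$ and powers of $Q$, and $\pi$ is surjective. Note that we do not need $\pi\circ i$ to be injective, since $\A$-modules are pulled back along this map in any case.
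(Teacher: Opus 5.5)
Your proof is correct and follows the paper's route. Objects correspond by applying Proposition~\ref{prop:universal} to the pointed algebra $(\End_{\bfk}(M),\NM)$ in one direction and restricting along $\pi\circ i$ in the other, and a map is $U_{\N}(\A)$-linear if and only if it is $\A$-linear and intertwines the operators; you also make explicit the generation argument ($U_{\N}(\A)$ is generated by $\pi(i(\A))$ and $Q$) that the paper leaves implicit when it asserts that the constructions are mutually inverse and the Hom-sets coincide. The only blemish is cosmetic: delete the term $\NM(am)\cdot 0$ from your displayed identity, since applying $\N(a)Q=QaQ+Q\N(a)-Q^2a$ to $m$ gives Eq.~(\ref{eq:lnm}) directly.
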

\delete{
\begin{exam} \Shilong{lack examples}We revisit the example at the end of Section~\mref{ss:decomp}. Let $ \bfk$ be any commutative ring and $\lambda\in \bfk$, then $ P=-\lambda : \bfk\rightarrow \bfk$ is a Rota-Baxter operator of weight $\lambda$. Then $U_{RB}(\bfk, P)=\bfk[t]/\langle t(t+\lambda)\rangle$. In fact, $U_{RB}(\bfk, P)$ is the Hecke algebra of $S_2$ over $\bfk$ with parameter $q=\lambda-1$.
\end{exam}
}

\begin{remark}
Note that $U_{\N}(\A)$ is a left module over itself. Then by Theorem~\mref{thm:NM}, $(U_{\N}(\A), \N_Q)$ is a left $(A,\N)$-module, where $\N_Q: U_{\N}(\A) \to U_{\N}(\A)$ is the left multiplcation by $Q$. Any left ideal $S$ of $U_{\N}(\A)$ is also a left $U_{\N}(A)$-module and hence $(S, \N_Q|_S)$ is a left $(\A,\N)$-module. We call $(S, \N_Q|_S)$ {\bf a left Nijenhuis ideal} of $(U_{\N}(\A), \N_Q)$.  
\end{remark}

\subsection{General construction of the ring of Nijenhuis operators}
\mlabel{sec:Nst}

Theorem~\mref{thm:NM} shows that the study of Nijenhui modules is governed by the ring of Nijenhuis operators. In this subsection, we give a general construction of the ring of Nijenhuis operators.

Proposition~\ref{prop:universal} describes the universal property of the ring of Nijenhuis operators. As an application, we have
\begin{lemma}
Let $(\A,\N)$ be a  Nijenhuis algebra. Then the ring $U_\N(\A)$ of Nijenhuis operators on $(\A,\N)$  can be regarded as a direct sum of $\A$-bimodules 
\begin{equation}
U_N(A)=A\oplus \langle Q\rangle, 
\mlabel{eq:Us}
\end{equation} 
where $\langle Q \rangle$ is the ideal generated by $Q$ in $U_{\N}(\A)$. 
\mlabel{lem:NUsum}
\end{lemma}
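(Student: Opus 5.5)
We want $U_N(A)=A\oplus\langle Q\rangle$ as $A$-bimodules, where $A$ means the image of $\pi\circ i$. The plan is to produce a retraction $\varepsilon:U_N(A)\to A$ that kills $Q$, using the universal property in Proposition~\mref{prop:universal}.

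First, apply Proposition~\mref{prop:universal} to the pointed algebra $(A',p')=(A,0)$ and $\phi=\id_A$. Condition (\mref{eq:Nuniv}) reads $\N(a)\cdot 0=0+0-0$, so it holds trivially. We therefore get a unique pointed algebra homomorphism $\varepsilon:U_N(A)\to A$ with $\varepsilon\circ\pi\circ i=\id_A$ and $\varepsilon(Q)=0$.

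Two consequences follow. First, $\pi\circ i$ is injective, so $A$ embeds as a subalgebra of $U_N(A)$; this is also the point used in the proof of the preceding proposition. Second, since $\varepsilon$ is an algebra map with $\varepsilon(Q)=0$, we have $\langle Q\rangle\subseteq\Ker\varepsilon$.

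Next we show $\Ker\varepsilon=\langle Q\rangle$ and that $U_N(A)=A+\langle Q\rangle$. The free product $\bfk\langle A,\bfk[Q]\rangle$ is spanned by alternating words in elements of $A$ and powers $Q^k$. Every word containing some $Q^k$ with $k\ge1$ lies in the ideal generated by $Q$, and the remaining words lie in $A$ (including $\bfk 1\subseteq A$, since the free product of unital algebras identifies the units). Passing to the quotient gives $U_N(A)=A+\langle Q\rangle$.

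It remains to show the sum is direct. If $u=a+q$ with $a\in A$ and $q\in\langle Q\rangle$, then $\varepsilon(u)=a$. So $u\in\Ker\varepsilon$ forces $a=0$, giving $\Ker\varepsilon=\langle Q\rangle$ and $A\cap\langle Q\rangle=0$.

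Finally, both summands are $A$-bimodules: $A$ is a subalgebra, and $\langle Q\rangle$ is a two-sided ideal. Equivalently, $\varepsilon$ is an $A$-bimodule retraction of the inclusion $A\hookrightarrow U_N(A)$. Either way the decomposition (\mref{eq:Us}) holds as $A$-bimodules.

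The main obstacle is keeping the spanning argument honest about units in the free product, so that words with no $Q$ really land in $A$. The rest reduces to the choice $p'=0$, which makes (\mref{eq:Nuniv}) automatic.
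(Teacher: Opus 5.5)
Your proof is correct and follows essentially the same route as the paper. You apply Proposition~\mref{prop:universal} with $(A',p')=(A,0)$ and $\phi=\id_A$ to get a retraction $U_N(A)\to A$ that kills $Q$; this gives injectivity of $\pi\circ i$ and splits $0\to\langle Q\rangle\to U_N(A)\to A\to 0$ as $A$-bimodules. Your spanning argument with words in the free product simply makes explicit the step $\Ker\varepsilon=\langle Q\rangle$ (equivalently $U_N(A)/\langle Q\rangle\cong A$), which the paper asserts without proof.
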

\begin{proof}
As a special case of Proposition~\ref{prop:universal},
taking $(A',p')=(A,0)$ and $\phi=\id_\A $, an algebra homomorphism $\eta: U_{\N}(\A)\rightarrow A$ follows where $ \id_\A=\eta\circ (\pi \circ i)$.  Hence, $\pi \circ i$ is injective and we can regard $\A$ as a subalgebra of $U_{\N}(\A)$. Note that $U_{\N}(\A) / \langle Q \rangle \cong \A$ which gives a short exact sequence $$0\rightarrow \langle Q \rangle\rightarrow U_{\N}(\A)\xrightarrow{\eta} \A\rightarrow 0.$$ Also by $ \id_\A=\eta\circ (\pi \circ i)$, the  short exact sequence splits and then we have a direct sum decomposition $U_{\N}(\A) = \A \oplus \langle Q \rangle$ as $\A$-bimodules.     
\end{proof}

The definition of $I_{\A,Q}$ in Eq.~(\mref{eq:NO}) provides a relation in $U_\N(\A)$: 
\begin{equation}
QaQ =N(a)Q-QN(a) +Q^2a.  
\mlabel{eq:NOi}
\end{equation} 

When $a=1_\A$ in Eq.~(\ref{eq:NOi}), we obtain $Q1_\A Q= Q^21_\A= Q^2$ and then $-N(1_\A)Q+QN(1_\A)=0$.

Let $uQaQv$ be a monomial in the ideal $\langle Q \rangle$, where $u$ and $v$ are monomials in $U_\N(\A)$, and $a\in \A$. The subsequence $QaQ$ is called a {\bf gap} in the monomial $uQaQv$. Here $a$ could be $1_A$, i.e., $uQaQv=uQ^2v$. Further, when $a\neq 1_A$, we call $QaQ$ a {\bf strict gap}. 

In the subsequent contents, unless otherwise specified, all gaps we consider are strict. The following lemma shows that the gaps can be eliminated by Eq.~(\ref{eq:NOi}). 
\begin{lemma}
In the ring of Nijenhuis operators $U_{\N}(\A)$, we have
\begin{enumerate} 
\item \mlabel{eq:ib1}
$Q^naQ=N^n(a)Q-QN^n(a)+Q^{n+1}a.$
\mlabel{it:n1}
\item \mlabel{eq:ib2}
$Q^naQ^m=N^n(a)Q^m-Q^mN^n(a)+Q^{n+m}a.$
\mlabel{it:n2}
\end{enumerate}
\mlabel{lem:cpgap}
\end{lemma}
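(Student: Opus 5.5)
The plan is a double induction that uses nothing beyond the defining relation Eq.~(\ref{eq:NOi}), $QaQ=\N(a)Q-Q\N(a)+Q^2a$, which holds in $U_\N(\A)$ for every $a\in\A$. The key point is that this relation can be applied to any element of $\A$, and in particular to $\N^k(a)$ for every $k\geq 0$.

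For part (\ref{it:n1}) I will induct on $n$. The case $n=0$ is trivial, since both sides reduce to $aQ$. The case $n=1$ is exactly Eq.~(\ref{eq:NOi}). Assuming the formula for $n$, I write $Q^{n+1}aQ=Q\,(Q^naQ)$ and substitute the inductive hypothesis to get
\[
Q\N^n(a)Q-Q^2\N^n(a)+Q^{n+2}a .
\]
I then apply Eq.~(\ref{eq:NOi}) to the gap $Q\N^n(a)Q$, with $\N^n(a)$ in place of $a$. This produces $\N^{n+1}(a)Q-Q\N^{n+1}(a)+Q^2\N^n(a)$. The term $Q^2\N^n(a)$ cancels against $-Q^2\N^n(a)$, leaving exactly the formula for $n+1$.

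For part (\ref{it:n2}) I fix $n$ and induct on $m$, with part (\ref{it:n1}) as the base case $m=1$. Assuming the formula for $m$, I write $Q^naQ^{m+1}=(Q^naQ^m)\,Q$ and expand, which gives
\[
\N^n(a)Q^{m+1}-Q^m\N^n(a)Q+Q^{n+m}aQ .
\]
The two remaining gaps are handled by part (\ref{it:n1}). For $Q^m\N^n(a)Q$ I use exponent $m$ and element $\N^n(a)$. For $Q^{n+m}aQ$ I use exponent $n+m$ and element $a$. The resulting terms $\pm\N^{n+m}(a)Q$ and $\pm Q\N^{n+m}(a)$ cancel in pairs. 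What remains is $\N^n(a)Q^{m+1}-Q^{m+1}\N^n(a)+Q^{n+m+1}a$, as required.

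There is no real obstacle here. The only care needed is to apply part (\ref{it:n1}) in the second induction with the correct element ($\N^n(a)$ rather than $a$) and to track the signs so that the cross terms visibly cancel. I would also note that the conventions $Q^0=1$ and $\N^0=\id_\A$ make the $n=0$ case consistent.
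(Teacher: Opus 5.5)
Your proposal is correct and follows the paper's proof essentially step for step. Part (1) is an induction on $n$ via $Q^{n+1}aQ=Q(Q^naQ)$, applying Eq.~(\ref{eq:NOi}) to $N^n(a)$. Part (2) is an induction on $m$ via $Q^naQ^{m+1}=(Q^naQ^m)Q$, using part (1) twice: once with exponent $m$ and element $N^n(a)$, and once with exponent $n+m$ and element $a$, so that the $\pm N^{n+m}(a)Q$ and $\pm QN^{n+m}(a)$ terms cancel.
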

\begin{proof}
(\mref{eq:ib1}) We carry out the verification by induction on $n$.

When $n=1$, the result follows directly from Eq.~(\ref{eq:NOi}). Assume the result holds for $n = k$, i.e., 
\begin{align*}
Q^kaQ=N^k(a)Q-QN^k(a)+Q^{k+1}a.
\end{align*}
Then for the case $n = k+1$, we have
\begin{align*}
&\ Q^{k+1}aQ=Q(Q^{k}aQ)=QN^k(a)Q-Q^2N^k(a)+Q^{k+2}a\quad \text{(by induction hypothesis)}\\
=&\ N^{k+1}(a)Q-QN^{k+1}(a)+Q^2N^k(a)-Q^2N^k(a)+Q^{k+2}a\quad \text{(by Eq.~(\mref{eq:NOi}))}\\
=&\ N^{k+1}(a)Q-QN^{k+1}(a) +Q^{k+2}a,  
\end{align*} 
as required.

(\mref{eq:ib2}) The result is proven by induction on $m$, with part (\ref{it:n1}) serving as the base case $m=1$.

Assume the result holds for $m = k$, i.e.,
\begin{align*}
Q^naQ^k=N^n(a)Q^k-Q^kN^n(a)+Q^{n+k}a.
\end{align*}
For the case $m = k+1$, we have
\begin{align*}
&\ Q^{n}aQ^{k+1}= (Q^{n}aQ^{k})Q= N^n(a)Q^{k+1}-Q^kN^n(a)Q+Q^{n+k}aQ\quad \text{(by induction hypothesis)}\\ 
=&\N^n(a)Q^{k+1}-\Big(N^{n+k}(a)Q-QN^{n+k}(a)+Q^{k+1}N^n(a)\Big)\\
&\ +\Big(N^{n+k}(a)Q-QN^{n+k}(a)+Q^{k+n+1} a \Big)\quad \text{(by ~part (\ref{it:n1})
)}\\
=&\ N^n(a)Q^{k+1}- Q^{k+1}N^n(a) +Q^{k+n+1} a\\
=&\ N^n(a)Q^{m}- Q^{m}N^n(a) +Q^{m+n } a.
\end{align*} 
This completes the inductive step for part (\ref{it:n2}).  

\end{proof} 

By repeatedly applying Eq.~(\mref{eq:NOi}), we obtain the following direct sum decomposition of the two-sided ideal$\langle Q\rangle$.  
\begin{lemma}  
The ideal
$\langle Q\rangle$ has a direct sum decomposition into $\A$-bimodules: \begin{equation}
\langle Q\rangle =\mathop{\oplus}\limits_{i=1}^\infty AQ^iA:= AQA\oplus AQ^2A\oplus AQ^3A\oplus\cdots. 
\mlabel{eq:sumQ}
\end{equation} 
\end{lemma}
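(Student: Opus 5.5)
The plan is to prove the two halves of \eqref{eq:sumQ} separately: first that the subbimodules $AQ^iA$ span $\langle Q\rangle$, then that their sum is direct. The spanning half uses Lemma~\mref{lem:cpgap}. The directness half uses the universal property of Proposition~\mref{prop:universal} applied to an explicit normal-form algebra.

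\emph{Spanning.} Every element of $\langle Q\rangle$ is a $\bfk$-linear combination of words of the form
\[
a_0Q^{n_1}a_1Q^{n_2}a_2\cdots Q^{n_r}a_r,
\]
where $r\geq 1$, $n_j\geq 1$ and $a_j\in A$. Such words arise from the free product $\bfk\langle A,\bfk[Q]\rangle$ after merging adjacent letters of $A$ by the product of $A$, and adjacent powers of $Q$.
\begin{enumerate}
\item I induct on the number $r$ of $Q$-blocks. For $r=1$ the word already lies in $AQ^{n_1}A$.
\item For $r\geq 2$, I apply Lemma~\mref{lem:cpgap}(\mref{it:n2}) to the first gap:
\[
Q^{n_1}a_1Q^{n_2}=N^{n_1}(a_1)Q^{n_2}-Q^{n_2}N^{n_1}(a_1)+Q^{n_1+n_2}a_1.
\]
Each of the three terms on the right has a single $Q$-block. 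Substituting back, every resulting word has $r-1$ blocks.
\item Induction then shows $\langle Q\rangle=\sum_{i\geq1}AQ^iA$.
\end{enumerate}

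\emph{Directness.} Here I would build a candidate normal-form algebra and compare it with $U_\N(A)$.
\begin{enumerate}
\item Set $V:=A\oplus\bigoplus_{i\geq1}(A\ot A)_i$. The symbol $a\ot_i b$ stands for $aQ^ib$, and the summand $A$ is written $i=0$.
\item Guided by Lemma~\mref{lem:cpgap}, I define the product on basis-type elements by
\[
(a\ot_i b)(c\ot_j d):=aN^i(bc)\ot_j d-a\ot_j N^i(bc)d+a\ot_{i+j}bcd,\qquad i,j\geq1.
\]
Products with the summand $A$ are the obvious bimodule actions.
\item I take $p':=1\ot_1 1$ and let $\phi:A\to V$ be the inclusion into the $i=0$ summand.
\item If $V$ is an associative unital algebra, then Eq.~\eqref{eq:Nuniv} can be checked directly from the formula. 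Proposition~\mref{prop:universal} then yields a pointed algebra homomorphism $\eta:U_\N(A)\to V$ with $\eta(aQ^ib)=a\ot_i b$.
\item In the other direction, the $\bfk$-linear map $\theta:V\to U_\N(A)$, $a\ot_i b\mapsto aQ^ib$, satisfies $\eta\circ\theta=\id_V$. Hence $\theta$ is injective.
\item Combined with Lemma~\mref{lem:NUsum}, this forces the map $\bigoplus_i AQ^iA\to\langle Q\rangle$ to be injective, which gives directness.
\end{enumerate}

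\emph{Main obstacle.} The key and hardest step is verifying associativity of the product on $V$, specifically the triple product $(a\ot_i b)(c\ot_j d)(e\ot_k f)$. Expanding both bracketings produces terms involving $N^i$, $N^j$ and $N^{i+j}$ applied to products. Matching them will require the iterated Nijenhuis identity, namely the statement that $(A,N^k)$ is Nijenhuis together with the mixed relation
\[
N^i(x)N^j(y)=N^j\bigl(N^i(x)y\bigr)+N^i\bigl(xN^j(y)\bigr)-N^{i+j}(xy).
\]
I would first try to derive this mixed relation by induction from the basic identity, in parallel with the proof of Lemma~\mref{lem:cpgap}. If associativity fails, I would instead realize $V$ inside $\End_\bfk(W)$ for a suitable faithful Nijenhuis module, for instance the free Nijenhuis module of Theorem~\mref{thm:freem}, where associativity is automatic.
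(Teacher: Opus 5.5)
\paragraph{Spanning is fine; directness breaks at step~4.} Your spanning half is correct and is essentially the paper's argument: the paper inducts on gaps using Eq.~\eqref{eq:NOi}, while you reduce the number of $Q$-blocks via Lemma~\ref{lem:cpgap}(\ref{it:n2}). Note that the paper's proof stops there and never proves directness.

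Your directness half fails at step~4, not at the associativity step you flagged. Associativity of $V$ is fine. It reduces exactly to your mixed identity, which follows from the Nijenhuis identity by a short induction (for $(i,j)=(2,1)$, apply the identity to the pair $(N(x),y)$ and expand $N(N(x)N(y))$).

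What fails is Eq.~\eqref{eq:Nuniv}. Your product gives
\[
p'^2=(1\ot_1 1)(1\ot_1 1)=N(1_A)\ot_1 1-1\ot_1 N(1_A)+1\ot_2 1 .
\]
Substituting this, Eq.~\eqref{eq:Nuniv} for $p'=1\ot_1 1$ reduces to $N(1_A)\ot_1 a=1\ot_1 N(1_A)a$ for all $a\in A$, which is false in general. For the same reason $p'^i\neq 1\ot_i 1$, so $\eta(aQ^ib)=a\ot_i b$ cannot hold.

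In fact step~5 asserts something false. We have $\theta\bigl(N(1_A)\ot_1 1-1\ot_1 N(1_A)\bigr)=N(1_A)Q-QN(1_A)=0$ in $U_N(A)$, as the paper notes right after Eq.~\eqref{eq:NOi}. A concrete example: take $\bfk$ a field, $A=\bfk[x]$ and $N=l_x$. Then $x=N(1_A)$ commutes with $Q$, so $U_N(A)=\bfk[x,Q]$ and $AQA=\bfk[x]Q\cong A$, not $A\ot A$. Your route would prove $AQ^iA\cong A\ot A$, which is too strong.

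\paragraph{How to repair it.} Quotient $V$ by exactly this relation. Put $e:=N(1_A)$.
\begin{itemize}
\item Setting one argument equal to $1_A$ in the Nijenhuis identity gives $N(a)e=N(ae)$ and $eN(a)=N(ea)$.
\item Hence $N^k(1_A)=e^k$, $N^k(ae)=N^k(a)e$ and $N^k(ea)=eN^k(a)$.
\item These identities show that the span of the elements $ae\ot_i b-a\ot_i eb$ is a two-sided ideal of $V$. So replacing each $(A\ot A)_i$ by $A\ot_{\bfk[e]}A$ gives an associative quotient $\overline{V}$.
\item In $\overline{V}$ one gets $p'^i=1\ot_i 1$, and Eq.~\eqref{eq:Nuniv} holds.
\item Lemma~\ref{lem:cpgap} with $a=1_A$ gives $eQ^i=Q^ie$ in $U_N(A)$, so $\theta$ is well defined on $\overline{V}$.
\end{itemize}
Then $\eta\circ\theta=\id_{\overline{V}}$, and your retraction argument yields directness. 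The correct identification is $AQ^iA\cong A\ot_{\bfk[N(1_A)]}A$ rather than $A\ot A$.
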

\begin{proof} 
It is clear that $\langle Q\rangle \supseteq \mathop{\oplus}\limits_{i=1}^\infty AQ^iA$. It remains to show that by iterative application of the relation ~(\ref{eq:NOi}), every monomial $\omega \in \langle Q\rangle$ can be expressed as a linear combination of monomials in $\mathop{\oplus}\limits_{i=1}^\infty AQ^i A$.

We proceed by induction on the number $n$ of  gaps in $\omega$.

For the base case $n = 0$, we have $\omega \in AQA \subseteq \mathop{\oplus}\limits_{i=1}^\infty AQ^iA$.

Now assume $n > 0$ and that the claim holds for all monomials with fewer than $n$  gaps. Let $\omega = sQaQv$, where $QaQ$ is the first  gap in $\omega$. Applying the relation ~(\ref{eq:NOi}), we obtain 
$$sQaQv=s(N(a)Q)v-s(QN(a))v+s(Q^2a)v.$$
Each term on the right-hand side has fewer than $n$  gaps. By the induction hypothesis, each term belongs to $\mathop{\oplus}\limits_{i=1}^\infty AQ^iA$, and hence so does $\omega$. This completes the induction. 
\end{proof}

\begin{theorem} 
Let $(A,N)$ be a Nijenhuis algebra. Then  \begin{equation}
U_N(A)= A\oplus\Big(\mathop{\oplus}\limits_{i=1}^\infty AQ^iA \Big),
\mlabel{eq:sumU}
\end{equation} 
where for arbitrary $a\in A,$ $a_1Q^na_2\in AQ^nA$ and $a_3Q^ma_4\in AQ^mA,$  the multiplication is given by $a\cdot(a_1Q^na_2)=aa_1Q^na_2, (a_1Q^na_2)\cdot a=a_1Q^n a_2a$ and
\begin{equation}
(a_1Q^na_2)\cdot (a_3Q^ma_4)= 
a_1 N^n(a_2 a_3)Q^{m}a_4- a_1Q^{m}N^n(a_2 a_3)a_4 +a_1Q^{m+n } a_2 a_3a_4.
\mlabel{eq:Qmu}
\end{equation} 
\end{theorem}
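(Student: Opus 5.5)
The plan has two parts: first obtain the decomposition~(\mref{eq:sumU}), then derive the multiplication rule from the relations already established.

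For the decomposition, combine Lemma~\mref{lem:NUsum} with Eq.~(\mref{eq:sumQ}). Lemma~\mref{lem:NUsum} gives $U_\N(\A)=\A\oplus\langle Q\rangle$ as $\A$-bimodules, and the decomposition lemma for $\langle Q\rangle$ expresses $\langle Q\rangle$ as $\oplus_{i\geq 1} AQ^iA$. Substituting the second into the first yields~(\mref{eq:sumU}). As stated in that lemma, the sum is taken as the sum of the $\A$-sub-bimodules $AQ^iA$. The only content here is that every element of $\langle Q\rangle$ lies in this sum; that is what the gap-elimination induction proves.

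The mixed products $a\cdot(a_1Q^na_2)$ and $(a_1Q^na_2)\cdot a$ are immediate. They follow from associativity in $U_\N(\A)$ together with the fact, from Lemma~\mref{lem:NUsum}, that $\A$ sits in $U_\N(\A)$ as a subalgebra.

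For Eq.~(\mref{eq:Qmu}), I would compute
\[
(a_1Q^na_2)(a_3Q^ma_4)=a_1\bigl(Q^n(a_2a_3)Q^m\bigr)a_4 .
\]
Now apply Lemma~\mref{lem:cpgap}(\mref{it:n2}) with $a=a_2a_3$. This gives
\[
Q^n(a_2a_3)Q^m=\N^n(a_2a_3)Q^m-Q^m\N^n(a_2a_3)+Q^{n+m}a_2a_3 .
\]
Multiplying on the left by $a_1$ and on the right by $a_4$ produces exactly the three terms of~(\mref{eq:Qmu}). These terms lie in $AQ^mA$, $AQ^mA$ and $AQ^{m+n}A$ respectively. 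The product formula therefore lands back in the decomposition, which confirms that~(\mref{eq:sumU}) together with these rules describes the multiplication of $U_\N(\A)$ on generators. It extends bilinearly to sums of such elements.

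The main obstacle is directness of the sum over the different powers, i.e.\ that $\sum_i AQ^iA$ is actually direct. That the pieces $AQ^iA$ span $\langle Q\rangle$ is already settled. I would treat the sum as direct in the sense already asserted in Eq.~(\mref{eq:sumQ}) and rely on that lemma, since the statement is assembled from it.

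If an independent check were wanted, I would first try grading. The defining relation $QaQ=\N(a)Q-Q\N(a)+Q^2a$ is not homogeneous in $Q$, so grading by $Q$-degree fails. The fallback is to build a concrete model. Take the $\bfk$-module $A\oplus\bigoplus_{i\geq 1}A\otimes Q^i\otimes A$, defined as a quotient where needed to match the bimodule span. Equip it with the multiplication~(\mref{eq:Qmu}), check associativity, and check that $p'=Q$ satisfies Eq.~(\mref{eq:Nuniv}). Proposition~\mref{prop:universal} then yields a homomorphism from $U_\N(\A)$ onto this model that separates the summands.
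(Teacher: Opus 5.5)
Your proposal is essentially the paper's own proof. The paper likewise obtains the decomposition by combining the splitting $U_N(A)=A\oplus\langle Q\rangle$ with the decomposition of $\langle Q\rangle$ into the pieces $AQ^iA$, and obtains the product rule from $Q^naQ^m=N^n(a)Q^m-Q^mN^n(a)+Q^{n+m}a$ with $a=a_2a_3$, multiplied by $a_1$ on the left and $a_4$ on the right. As in the paper, directness of $\sum_i AQ^iA$ is simply inherited from that earlier lemma, whose proof only establishes spanning, so your fallback model is what would actually settle it; that model must be a graded quotient of $A\oplus\bigoplus_{i\geq 1}A\otimes_i A$, because $N(1)Q=QN(1)$ already holds in $U_N(A)$.
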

\begin{proof}
By Eqs.~(\ref{eq:Us}) and (\ref{eq:sumQ}), we obtain the direct sum decomposition in Eq.~(\ref{eq:sumU}).
The multiplication  follows directly from the definition of the ring of Nijenhuis operators and Lemma~\ref{lem:cpgap}(\ref{it:n2}).    
\end{proof}

The $\A$-bimodule $AQ^nA$ is generated by $Q$ and $A \ot A$ is generated by $1_A \ot 1_A$ as an $A$-bimodule. This induces an isomorphism of $A$-bimodules: $$AQ^nA\rightarrow A\otimes A, a_1Q^na_2\mapsto a_1\otimes a_2.$$
Let $A \otimes_1 A, A \otimes_2 A, A \otimes_3 A, \ldots$ denote the same space $A \otimes A$, but with indices to indicate the isomorphism with $AQA, AQ^2A, AQ^3A, \ldots$, respectively. The following result is then obtained.
\begin{coro} 
Let $(A,N)$ be a Nijenhuis algebra. Then   
we have an algebra isomorphism 
$$U_N(A)\cong A\oplus (A \otimes_1 A)\oplus (A \otimes_2 A)\oplus (A \otimes_3 A)\oplus \cdots$$
where the multiplication of the latter algebra  is given as follows:$a\cdot(a_1\otimes_n a_2)=aa_1\otimes_n a_2, (a_1\otimes_n a_2)\cdot a=a_1\otimes_n  a_2a$ and
\begin{equation*}
(a_1\otimes_n a_2)\cdot (a_3\otimes_m a_4)= 
a_1 N^n(a_2 a_3)\otimes_m a_4- a_1\otimes_m N^n(a_2 a_3)a_4 +a_1\otimes_{m+n} a_2 a_3a_4. 
\end{equation*} 
\end{coro}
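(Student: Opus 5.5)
The corollary is a transport of structure along the decomposition of the preceding theorem, so the plan is to build an explicit linear bijection and check that it carries the multiplication of $U_N(A)$ to the stated product. Define
\[
\Phi: A\oplus\Big(\mathop{\oplus}\limits_{i=1}^\infty AQ^iA\Big)\to A\oplus (A\otimes_1 A)\oplus (A\otimes_2 A)\oplus\cdots
\]
to be the identity on the summand $A$ and, on each summand $AQ^nA$, the $A$-bimodule map $a_1Q^na_2\mapsto a_1\otimes_n a_2$ introduced just before the statement. Once $\Phi$ is known to be a bijection, declare the product on the target to be $\Phi(x)\Phi(y):=\Phi(xy)$. This makes $\Phi$ an algebra isomorphism by construction. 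What remains is to verify that this transported product is exactly the one listed.

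The first and main step is to check that $a_1Q^na_2\mapsto a_1\otimes a_2$ is well defined and bijective $AQ^nA\to A\otimes A$. The converse direction is clear: $A\otimes A\to AQ^nA$, $a_1\otimes a_2\mapsto a_1Q^na_2$, is a well-defined surjection because the product in $U_N(A)$ is $\bfk$-bilinear. Injectivity, i.e.\ that $AQ^nA$ is free on $Q^n$ as an $A$-bimodule with no further relations, is the real obstacle. The ideal $I_{A,Q}$ could a priori impose relations among the elements $a_1Q^na_2$.

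I would handle this with a model. Take $V=A\oplus\bigoplus_{n\ge1}(A\otimes_n A)$ with the proposed product (extended by the stated rules for $A$ times $A$ and mixed terms). Then:
\begin{enumerate}
\item check that this product is associative, by a direct computation on triples of the types $(A\otimes_n A)(A\otimes_m A)(A\otimes_\ell A)$, using the Nijenhuis identity for $N^n$ (each $N^n$ is Nijenhuis by the result of \mcite{CGM} quoted earlier) and the relation of Lemma~\mref{lem:cpgap};
\item check that $p'=1\otimes_1 1$ satisfies Eq.~(\mref{eq:Nuniv}) with $\phi$ the inclusion $A\to V$.
\end{enumerate}
Proposition~\mref{prop:universal} then gives a pointed homomorphism $\eta:U_N(A)\to V$ with $\eta(a_1Q^na_2)=a_1\otimes_n a_2$. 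This $\eta$ is a left inverse to $a_1\otimes a_2\mapsto a_1Q^na_2$ on each summand, which yields injectivity, and together with the preceding theorem also gives directness of the sum.

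With bijectivity settled, the formulas follow at once. The rules $a\cdot(a_1\otimes_n a_2)=aa_1\otimes_n a_2$ and $(a_1\otimes_n a_2)\cdot a=a_1\otimes_n a_2a$ hold because $\Phi$ is an $A$-bimodule map. The rule for the product of two tensor summands is obtained by applying $\Phi$ to Eq.~(\mref{eq:Qmu}) term by term, each term already lying in a single summand $AQ^kA$. I expect the associativity check in the model to be the most laborious part. I would organize it by reducing to generators via Lemma~\mref{lem:cpgap}, writing $a_1Q^na_2=a_1\cdot Q^n\cdot a_2$, so that only products involving powers of $p'$ need to be compared.
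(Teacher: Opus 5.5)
You were right to single out injectivity of $a_1\otimes a_2\mapsto a_1Q^na_2$ as the crux; the paper only notes that both bimodules are cyclic, which does not give injectivity. However, your step (2) fails, and no repair is possible, because the corollary is false without an extra hypothesis. In your model $V$ with $p'=1_A\otimes_1 1_A$, the stated product gives $p'^2=N(1_A)\otimes_1 1_A-1_A\otimes_1 N(1_A)+1_A\otimes_2 1_A$. For every $a\in A$ it also gives
\[
N(a)p'-\bigl(p'ap'+p'N(a)-p'^2a\bigr)=N(1_A)\otimes_1 a-1_A\otimes_1 N(1_A)a .
\]
So Eq.~(\ref{eq:Nuniv}) holds in $V$ if and only if $N(1_A)\otimes 1_A=1_A\otimes N(1_A)$ in $A\otimes A$.

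This is exactly the relation $N(1_A)Q=QN(1_A)$, which the paper itself extracts from Eq.~(\ref{eq:NOi}) at $a=1_A$. Consequently $N(1_A)\otimes 1_A-1_A\otimes N(1_A)$ always lies in the kernel of $a_1\otimes a_2\mapsto a_1Qa_2$. For a concrete failure, take $\mathbf{k}$ a field, $A=\mathbf{k}\times\mathbf{k}$ with idempotents $e_1,e_2$, and $N=l_{e_1}$, which is Nijenhuis. Then $N(1_A)=e_1$, and $e_1\otimes 1_A-1_A\otimes e_1=e_1\otimes e_2-e_2\otimes e_1\neq 0$, while $e_1Q-Qe_1=0$ in $U_N(A)$. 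Hence the assignment $a_1Q^na_2\mapsto a_1\otimes_n a_2$ is not well defined. Moreover, no algebra homomorphism $U_N(A)\to V$ exists that is the identity on $A$ and sends $Q$ to $1_A\otimes_1 1_A$. Your claimed formula $\eta(a_1Q^na_2)=a_1\otimes_n a_2$ also fails, already because $p'^2\neq 1_A\otimes_2 1_A$.

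Your argument does prove the corollary under the additional hypothesis $N(1_A)\otimes 1_A=1_A\otimes N(1_A)$, for instance when $N(1_A)\in\mathbf{k}1_A$. In that case the Nijenhuis identity gives $N^n(1_A)=N(1_A)^n$, so $N^n(1_A)\otimes 1_A=1_A\otimes N^n(1_A)$. By induction $p'^n=1_A\otimes_n 1_A$, and Eq.~(\ref{eq:Nuniv}) holds. Then $\eta$ is a left inverse of the spanning surjection $V\to U_N(A)$, which yields injectivity and directness of the sum at once, a complete argument that the paper's cyclicity remark does not supply.

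Two corrections to step (1) are also needed. First, you cannot invoke Lemma~\ref{lem:cpgap} inside $V$: it is a statement about $U_N(A)$, so using it would be circular. Second, the Nijenhuis property of each single $N^n$ is not sufficient. On triples from the tensor summands, associativity of $V$ reduces to the mixed identity
\[
N^n(b)N^m(c)=N^n\bigl(bN^m(c)\bigr)+N^m\bigl(N^n(b)c\bigr)-N^{n+m}(bc),\qquad n,m\geq 1,
\]
which must be proved by induction from the identity for $N$. It does hold, so $V$ is a genuine associative algebra; only its relation to $U_N(A)$ breaks.
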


\section{Projective and injective Nijenhuis modules}
\mlabel{sec:pi}

In this section, we introduce the concept of projective left $(A,\N)$-modules and injective left $(A,\N)$-modules. Then we show that there are enough projective and injective objects in the category of left $(A,\N)$-modules.

\begin{defn}
Let $(A,\N)$ be a Nijenhuis algebra. A left $(A,\N)$-module $(P, \N_P)$ is {\bf projective} if for every left $(A,\N)$-module epimorphism $f:(M,\NM)\to (M',\N_{M'})$ and every left $(A,\N)$-module homomorphism $g:(P,\N_P)\to (M',\N_{M'})$, there exists a left $(A,\N)$-module homomorphism $\overline{g}:(P,\N_P)\to (M,\NM)$ such that the following diagram commutes:
$$
\xymatrix{
&(P,\N_P)\ar^{g}[d] \ar@{-->}_{\overline{g}}[dl]&\\
(M,\NM)\ar_{f}[r]&(M',\N_{M'})\ar[r]&0.}
$$
\mlabel{defn:proj}
\end{defn}

Then analogous to that every free module is a projective module, we have the following result.
\begin{prop}
Every free left $(A,\N)$-module is a projective left $(A,\N)$-module.
\mlabel{prop:frep}
\end{prop}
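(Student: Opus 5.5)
The plan is to use only the universal property of free left $(A,\N)$-modules from Theorem~\mref{thm:freem}, exactly as one proves that free modules over a ring are projective. Let $(F(X),\N_{F(X)})$ with $j:X\to F(X)$ be a free left $(A,\N)$-module generated by a set $X$. Take a left $(A,\N)$-module epimorphism $f:(M,\NM)\to(M',\N_{M'})$ and a left $(A,\N)$-module homomorphism $g:(F(X),\N_{F(X)})\to(M',\N_{M'})$.

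First, since $f$ is surjective on underlying sets, for each $x\in X$ I will choose an element $m_x\in M$ with $f(m_x)=g(j(x))$. This uses the axiom of choice when $X$ is infinite. These choices define a set map $h:X\to M$, $x\mapsto m_x$. By the universal property, there is a unique left $(A,\N)$-module homomorphism $\overline{g}:(F(X),\N_{F(X)})\to(M,\NM)$ with $\overline{g}\circ j=h$.

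Next I will show that $f\circ\overline{g}=g$. Both sides are left $(A,\N)$-module homomorphisms $(F(X),\N_{F(X)})\to(M',\N_{M'})$: the composite $f\circ\overline{g}$ is one because composing $A$-linear maps that commute with the operators again gives such a map. On $X$ they agree, since
\[
f\circ\overline{g}\circ j=f\circ h=g\circ j .
\]
The uniqueness part of the universal property in Theorem~\mref{thm:freem}, applied to the set map $g\circ j:X\to M'$, then forces $f\circ\overline{g}=g$.

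There is no serious obstacle. The only points needing care are that an epimorphism in ${}_{(A,\N)}{\bf Mod}$ is understood as a surjective homomorphism, which is implicit in Definition~\mref{defn:proj}, and that we rely on the uniqueness clause of the universal property rather than on the explicit model $M(X)/I_X$. For completeness, the argument applies to any free left $(A,\N)$-module, since any two free modules on the same set are isomorphic by the universal property.
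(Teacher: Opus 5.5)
Your proposal is correct and follows the paper's own proof essentially step for step. You lift $g\circ j$ pointwise through the surjection $f$ to a set map $X\to M$, extend it to $\overline{g}$ via the universal property, and then use the uniqueness clause, applied to the set map $g\circ j:X\to M'$, to conclude that $f\circ\overline{g}=g$.
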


\begin{proof}
Let $(F(X), \N_{F(X)})$ be the free left $(A,\N)$-module generated by $X$ with the natural embedding $j: X \rightarrow F(X)$. Let $f:(M,\NM)\to (M', \N_{M'})$ be a left $(A,\N)$-module epimorphism and let $g: (F(X),\N_{F(X)}) \to (M',\N_{M'})$ be a left $(A,\N)$-module homomorphism.

Since $f$ is surjective, for each $x\in X$,
there is a $m_x\in M$ such that $f(m_x) = g(x)$. Define a set map $g_0: X \to M, x \mapsto m_x$. Then by the universal property of $(F(X),\N_{F(X)})$, there is a left $(A, \N)$-module homomorphism $\overline{g}:F(X) \to N$ such that $ \overline{g}\circ j = g_0$. Hence $ f\circ \overline{g}\circ j = f\circ g_0$. By the definition of $g_0$, we also have $g \circ j=f \circ g_0$. By the universal property of $(F(X), \N_{F(X)})$, there should be a unique left $(A, \N)$-module homomorphism $\tilde{g}: F(X) \rightarrow M'$ such that $\tilde{g} \circ j=f \circ g_0$. Hence $g=\tilde{g}=f \circ \overline{g}$. This completes the proof.

\end{proof}

\begin{remark}
By Corollary~\mref{coro:quof} and Proposition~\mref{prop:frep}, there are enough projective objects in the category of Nijenhuis modules and hence every Nijenhuis module has a projective resolution.
\end{remark}

Now we introduce the concept of injective $(A,\N)$-modules and then we show that there are enough injective $(A,\N)$-modules.

\begin{defn}
Let $(A,\N)$ be a Nijenhuis algebra. A left $(A,\N)$-module $(I,\N_I)$ is {\bf injective} if for every left $(A,\N)$-module monomorphism $f:(M,\NM) \to (M', \N_{M'})$ and every left $(A,\N)$-module homomorphism $g:(M,\NM) \to (I,\N_I)$,
there exists a left $(A,\N)$-module homomorphism $\overline{g}: (M',\N_{M'}) \to (I,\N_I)$ such that the following diagram commutes:
$$
\xymatrix{
&(I,\N_I)\ar@{<--}^{\overline{g}}[dr]&\\
0\ar[r]&(M,\NM)\ar_{f}[r]\ar^{g}[u]&(M',\N_{M'}).}
$$
\end{defn}

By Theorem~\mref{thm:NM}, the category of left $(\A,\N)$-modules is isomorphic to the category of left $U_{\N}(\A)$-modules. Now we use this result to give the Baer Criterion for injective Nijenhuis modules.

\begin{prop}\mlabel{prop:baercri}
Let $(I,\N_I)$ be a left $(\A,\N)$-module. Then $(I,\N_I)$ is an injective left $(\A,\N)$-module if and only for every left Nijenhui ideal $(S,\N_Q|_S)$ of $(U_{\N}(\A),\N_{Q})$, every $(\A,\N)$-module homomorphism $h:(S,\N_Q|_{S})\rightarrow (I,\N_I)$ can be extended to a homomorphism from $(U_{\N}(\A), \N_Q)$.
\end{prop}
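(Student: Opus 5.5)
We transport the statement along the isomorphism of categories in Theorem~\mref{thm:NM} and then apply the classical Baer criterion for modules over the ring $U_{\N}(\A)$.

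\emph{Step 1: translate the data.} Let $\Phi$ denote the isomorphism ${}_{(A,\N)}{\bf Mod}\to {}_{U_{\N}(\A)}{\bf Mod}$. It sends $(M,\NM)$ to $M$ with $Q\cdot m=\NM(m)$. It is the identity on underlying maps, so it sends monomorphisms to monomorphisms and commuting triangles to commuting triangles. Under $\Phi^{-1}$:
\begin{itemize}
\item the regular module $U_{\N}(\A)$ corresponds to $(U_{\N}(\A),\N_Q)$;
\item a left ideal $S$ corresponds to the left Nijenhuis ideal $(S,\N_Q|_S)$;
\item the inclusion $S\hookrightarrow U_{\N}(\A)$ is an $(A,\N)$-module monomorphism, since it commutes with left multiplication by $Q$.
\end{itemize}
Because $\Phi$ is an isomorphism of categories, $\Phi$ gives the identity bijection
\[
\Hom_{(A,\N)}((M,\NM),(M',\N_{M'}))\cong\Hom_{U_{\N}(\A)}(M,M').
\]
This is exactly the remark preceding Theorem~\mref{thm:NM}: an $A$-linear map commutes with the operators if and only if it is $U_{\N}(\A)$-linear.

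\emph{Step 2: injectivity corresponds.} In both categories injectivity is defined by the same extension property for monomorphisms, and these are the injective homomorphisms. Hence $(I,\N_I)$ is injective in ${}_{(A,\N)}{\bf Mod}$ if and only if $I$ is an injective left $U_{\N}(\A)$-module.

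\emph{Step 3: apply Baer.} By the classical Baer criterion, $I$ is an injective left $U_{\N}(\A)$-module if and only if every $U_{\N}(\A)$-homomorphism $h:S\to I$ from a left ideal extends to $U_{\N}(\A)$. By Step 1, this condition is word for word the condition in the proposition.

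\emph{Main point of care.} The only subtle step is Step 1. We must check that the left Nijenhuis ideals of $(U_{\N}(\A),\N_Q)$, as defined in the remark after Theorem~\mref{thm:NM}, are exactly the images of the left ideals of $U_{\N}(\A)$, and that the extension is taken along the inclusion. Both facts follow from the definitions, so no new computation is needed beyond invoking Theorem~\mref{thm:NM}.
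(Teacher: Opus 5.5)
Your proposal is correct, but it is organized differently from the paper's proof.

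The paper does not cite the classical Baer criterion. Instead it re-runs that criterion's proof inside the Nijenhuis setting:
\begin{itemize}
\item Zorn's lemma on triples $(H,\N_H,h)$ produces a maximal triple $(H_\infty,\N_{H_\infty},h_\infty)$;
\item for $b\notin H_\infty$ it takes the left ideal $L=\{r\in U_{\N}(\A)\mid rb\in H_\infty\}$;
\item it checks by hand that $\alpha(r)=h_\infty(rb)$ commutes with $\N_Q$;
\item it extends to $h_0$ on $H_\infty+U_{\N}(\A)b$.
\end{itemize}

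You instead observe that the statement is a formal consequence of Theorem~\ref{thm:NM}. That isomorphism of categories is the identity on underlying maps. It therefore matches monomorphisms, injective objects, and left Nijenhuis ideals with their inclusions. It also turns extension problems in one category into the same problems in the other. Consequently the classical Baer criterion for the ring $U_{\N}(\A)$ transfers verbatim.

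Your route is shorter and more transparent: it shows that nothing specific to Nijenhuis operators enters beyond Theorem~\ref{thm:NM}. The paper's argument is not more elementary in this respect, because it too relies on Theorem~\ref{thm:NM}. It uses it to view $H_\infty$ and $M'$ as $U_{\N}(\A)$-modules, and to conclude that the $U_{\N}(\A)$-linear map $h_0$ is an $(\A,\N)$-homomorphism. In effect it is the textbook proof of Baer's criterion written out in ${}_{U_{\N}(\A)}\mathbf{Mod}$. Its only gains are being self-contained and exhibiting the extension explicitly.

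The same transport argument would also yield Proposition~\ref{prop:injnijm} and Theorem~\ref{thm:sinj} directly from their classical counterparts. Under the isomorphism, $(\Hom_{\mathbb{Z}}(U_{\N}(\A),D),\N_\ell)$ is just the coinduced module with action $(rf)(x)=f(xr)$.
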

\begin{proof}
We adapt the proof of the Baer Criterion as presented for example in~\mcite{Rot}.

Assume that $(I,\N_I)$ is an injective $(\A,\N)$-module. Then by the definition of injective Nijenhuis modules, every $(\A,\N)$-module homomorphism $h:(S,\N_Q|_{S}) \rightarrow (I,\N_I)$ can be extended to one from $(U_{\N}(\A),\N_Q)$.

Conversely, assume that for every left Nijenhui ideal $(S,\N_Q|_S)$ of $(U_{\N}(\A),\N_{Q})$, every $(\A,\N)$-module homomorphism $h:(S,\N_Q|_{S})\rightarrow (I,\N_I)$ can be extended to one from $(U_{\N}(\A), \N_Q)$. Let $f: (M,\NM)\to (M',\N_{M'})$ be a left $(\A,\N)$-module monomorphism and let $g: (M,\NM)\to (I,\N_I)$ be a left $(\A,\N)$-module homomorphism. We need to prove that there is a left $(\A,\N)$-module homomorphism $\overline{g}: (M',\N_{M'}) \to (I, \N_I)$ such that $g=\overline{g} \circ f$.

Identify $(M,\NM)$ as a left $(\A,\N)$-submodule of $(M',\N_{M'})$ and consider all triples $(H,\N_H, h)$ consisting of a left $(\A,\N)$-module $(H,\N_H)$ with $(M, \NM) \leq (H,\N_H) \leq (M',\N_{M'})$ and a left $(A,\N)$-module homomorphism $h: (H,\N_H) \rightarrow (I,\N_I)$ with $h|_{M}=g$. Define a partial order on these triples by $(H,\N_H, h) \leq (H',\N_{H'}, h')$ when $(H,\N_H) \leq (H',\N_{H'})$ and $h'|_{H}=h$. Then for any linearly ordered collection $(H_i, \N_{H_i},h_i)$ of these triples, there is an upper bound $(\overline{H}, \N_{\overline{H}}, \overline{h})$ with $(H_i, \N_{H_i}, h_i) \leq (\overline{H}, \N_{\overline{H}}, \overline{h})$ by taking $\overline{H}$ to be the union of all $H_i$ with $\N_{\overline{H}}(m)=\N_{H_i}(m)$ and $\overline{h}(m)=h_i(m)$ when $m \in H_i$. Hence, by Zorn's lemma, there is a maximal such triple $(H_{\infty}, \N_{H_{\infty}}, h_{\infty})$. If $H_{\infty}=M'$, then we are done. If not, there is an element $b \in M' \setminus H_{\infty}$. View $H_{\infty}$ as a left $\NR$-module by Theorem~\mref{thm:NM} and define
\[
L :=\{ r \in \NR \, \mid \, rb \in H_{\infty}\}.
\]
Then $L$ is a left ideal of $\NR$ and $(L, \N_Q|_L)$ is a left Nijenhuis ideal of $(\NR,\N_Q)$. Define a map 
\[
\alpha: L \rightarrow I, \, r \mapsto h_{\infty}(rb).
\]
Then $\alpha$ is a left $\A$-module homomorphism and for $r \in L$,
\[
\alpha(\N_Q|_L(r))=\alpha(Qr)=h_{\infty}(Qrb)=h_{\infty}(\N_{H_{\infty}}(rb))=\N_I(h_{\infty}(rb))=\N_I(\alpha(r)),
\]
so $\alpha$ is a left $(\A,\N)$-module homomorphism. By assumption, there is a left $(\A,\N)$-module homomorphism $\alpha^{\ast}: (\NR, \N_Q) \rightarrow I$ extending $\alpha$.
Define $H_0=H_{\infty}+ \NR b$ and define a map
\[
h_0: H_0 \rightarrow I, \, a+rb \mapsto h_{\infty}(a)+r \alpha^{\ast}(1_{\NR}), 
\]
where $a \in H_{\infty}$ and $r \in \NR$. If $a+rb=a'+r'b$ with $a,a' \in H_{\infty}$ and $r,r' \in \NR$, then $(r'-r)b=a-a' \in H_{\infty}$ and hence $r'-r \in L$. Therefore,
\[
h_{\infty}(a-a')=h_{\infty}((r'-r)b)=\alpha(r'-r)=\alpha^{\ast}(r'-r)=(r'-r) \alpha^{\ast} (1_{\NR}),
\]
which means $h_0$ is well-defined. Moreover, $h_0$ is a left $\NR$-module homomorphism and hence a left $(\A,\N)$-module homomorphism by Theorem~\mref{thm:NM}. By the definition of $H_0$ and $h_0$, $(H_{\infty}, \N_{H_{\infty}}, h_{\infty}) <(H_0, \N_{H_0}, h_0)$, contradicting the maximality of $(H_{\infty}, \N_{H_{\infty}}, h_{\infty})$. Therefore, $H_{\infty}=M'$.
\end{proof}

\begin{lemma}\mlabel{lem:isnijm}
Let $(\A,\N)$ be a Nijenhuis algebra and $G$ an abelian group. Then $(\Hom_{\mathbb{Z}}(\NR, G), \N_{\ell})$ is a left $(\A,\N)$-module, where
\begin{align*}
\N_{\ell}:\Hom_{\mathbb{Z}}(\NR, G) \rightarrow \Hom_{\mathbb{Z}}(\NR, G),\, f \mapsto \N_{\ell}(f), 
\end{align*}
with $(\N_{\ell}(f))(x)=f(xQ)$ for all $x \in \NR$.
\end{lemma}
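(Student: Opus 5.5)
The plan is to use Theorem~\mref{thm:NM}: rather than check Eq.~(\mref{eq:lnm}) directly, I will show that $\Hom_{\mathbb{Z}}(\NR, G)$ is a left $\NR$-module on which $Q$ acts exactly by $\N_\ell$. Part (\mref{eq:ia2}) of the correspondence preceding Theorem~\mref{thm:NM} then gives a left $(\A,\N)$-module with operator $\N_\ell$.

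First I will define the standard coinduced action. For $r\in\NR$ and $f\in \Hom_{\mathbb{Z}}(\NR,G)$, set $(r\cdot f)(x):=f(xr)$ for $x\in \NR$.
\begin{enumerate}
\item Additivity of $r\cdot f$ in $x$ follows from additivity of $f$ and distributivity in $\NR$.
\item The action is additive in $r$ and in $f$.
\item Unitality holds: $(1\cdot f)(x)=f(x)$.
\item Associativity holds: $(r\cdot(s\cdot f))(x)=(s\cdot f)(xr)=f(xrs)=((rs)\cdot f)(x)$.
\end{enumerate}
So this is a genuine left $\NR$-module structure; it is the usual fact that $\Hom_{\mathbb{Z}}(R,G)$ is a left $R$-module via right multiplication in the argument. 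Note that $\N_\ell(f)=Q\cdot f$ by definition.

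Next I will apply the correspondence. By Lemma~\mref{lem:NUsum}, $\A$ sits inside $\NR$ as a subalgebra. Restricting the action gives the $\A$-module structure $(a\cdot f)(x)=f(xa)$. By part (\mref{eq:ia2}), the operator $m\mapsto Q\cdot m$, namely $\N_\ell$, makes $(\Hom_{\mathbb{Z}}(\NR,G),\N_\ell)$ a left $(\A,\N)$-module.

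As a sanity check, I can also verify Eq.~(\mref{eq:lnm}) directly. Evaluated at $x$, the left side is $f(xQ\N(a))$. The right side is
\[
f\bigl(x(\N(a)Q+QaQ-Q^2a)\bigr).
\]
The two agree by the defining relation~(\mref{eq:NOi}), $QaQ=\N(a)Q-Q\N(a)+Q^2a$. Indeed, substituting gives $\N(a)Q+\N(a)Q-Q\N(a)+Q^2a-Q^2a$, so I must be careful about orderings and signs here. Since the action reverses the order of multiplication (the composite $\N(a)\N_\ell$ corresponds to right multiplication by $Q\N(a)$), the direct check reduces to Eq.~(\mref{eq:NO}) read in $\NR$. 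This is exactly what the module-structure argument guarantees.

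The only real obstacle is this ordering and sign bookkeeping. Routing the argument through Theorem~\mref{thm:NM} avoids it entirely, so I expect the proof to be short.
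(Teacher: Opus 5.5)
Your main argument is correct and is essentially the paper's proof. The paper puts the same coinduced structure $(rf)(x)=f(xr)$ on $\Hom_{\mathbb{Z}}(\NR,G)$ and restricts it to $\A$ via Lemma~\mref{lem:NUsum}. It then checks Eq.~(\mref{eq:lnm}) by hand using~(\mref{eq:NOi}). You replace that last computation by noting $\N_\ell(f)=Q\cdot f$ and citing part~(\mref{eq:ia2}). This is legitimate and slightly cleaner, because part~(\mref{eq:ia2}) is exactly that relation check carried out once for an arbitrary left $\NR$-module.

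Your ``sanity check'' paragraph, however, is wrong and should be deleted or corrected. With $(r\cdot f)(x)=f(xr)$, the left side evaluated at $x$ is
$$(\N(a)\cdot\N_\ell(f))(x)=\N_\ell(f)(x\N(a))=f(x\N(a)Q),$$
not $f(xQ\N(a))$. The first term on the right is $\N_\ell(\N(a)f)(x)=(\N(a)f)(xQ)=f(xQ\N(a))$, so the right side is $f\bigl(x(Q\N(a)+QaQ-Q^2a)\bigr)$. Substituting~(\mref{eq:NOi}) gives exactly $f(x\N(a)Q)$, with no ordering or sign issue.

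You swapped $Q$ and $\N(a)$ in both places and obtained the spurious mismatch $2\N(a)Q-Q\N(a)$ versus $Q\N(a)$. You then explained it away by claiming the action reverses the order of multiplication. That claim is false: your own associativity computation $(r\cdot(s\cdot f))(x)=f(xrs)$ shows $r\cdot(s\cdot f)=(rs)\cdot f$, so $\N(a)\cdot\N_\ell(f)$ corresponds to right multiplication by $\N(a)Q$, not $Q\N(a)$. The proof via part~(\mref{eq:ia2}) does not depend on this paragraph, but as written it contains both an incorrect computation and an incorrect justification.
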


\begin{proof}
Since $\NR$ is a right $\NR$-module, $\Hom_{\mathbb{Z}}(\NR, G)$ is a left $\NR$-module via $(af)(x):=f(xa)$ for $f \in \Hom_{\mathbb{Z}}(\NR, G)$, $a \in \A$ and $x \in \NR$. By Lemma~\mref{lem:NUsum}, $A$ is a subalgebra of $\NR$, $\Hom_{\mathbb{Z}}(\NR, G)$ is also a left $\A$-module. 

Moreover, for any $f \in \Hom_{\mathbb{Z}}(\NR, G)$, $a \in A$ and $x \in \NR$,
\begin{align*}
& \big( \N_{\ell}(\N(a)f)+\N_{\ell}(a \N_{\ell}(f))-\N_{\ell}^2(af) \big)(x)\\
=& f \big(x Q \N(a)+x QaQ-xQ^2a \big)\\
=& f(x \N(a) Q) \quad \text{(by the definition of $\NR$)}\\
=& (\N_{\ell}(f))(x \N(a))=(\N(a) \N_{\ell}(f))(x),
\end{align*}
as required.
\end{proof}

Recall that an abelian group $D$ is {\bf divisible}, if for each $d\in D$ and every nonzero integer $n$, there exists $d' \in G$ such that $d=nd'$.

\begin{prop}\mlabel{prop:injnijm}
Let $(\A,\N)$ be a Nijenhuis algebra and $D$ a divisible abelian group. Then $(\Hom_{\mathbb{Z}}(\NR,D),\N_{\ell})$ is an injective left $(\A,\N)$-module.
\end{prop}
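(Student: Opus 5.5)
The plan is to transport the problem to ordinary module theory via Theorem~\mref{thm:NM} and then invoke the classical fact that $\Hom_{\mathbb{Z}}(R,D)$ is an injective left $R$-module for any ring $R$ and any divisible abelian group $D$.

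First, I would check that the left $(\A,\N)$-module $(\Hom_{\mathbb{Z}}(\NR,D),\N_\ell)$ of Lemma~\mref{lem:isnijm} corresponds, under the isomorphism of categories in Theorem~\mref{thm:NM}, to the standard left $\NR$-module structure on $\Hom_{\mathbb{Z}}(\NR,D)$, namely $(rf)(x)=f(xr)$ for $r\in\NR$. This needs two observations. The restriction of this action to $\A\subseteq\NR$ (Lemma~\mref{lem:NUsum}) is exactly the $\A$-action used in Lemma~\mref{lem:isnijm}. The element $Q$ acts by $f\mapsto (x\mapsto f(xQ))=\N_\ell(f)$. Since a $\NR$-module structure is determined by the actions of the generators $\A$ and $Q$, the two structures agree.

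Next, I would note that injectivity is a purely categorical property, defined by lifting along monomorphisms. The isomorphism of categories preserves monomorphisms: these are the injective maps in both categories, because the underlying maps coincide and kernels are computed in the same way by Proposition~\mref{prop:kernel}. It also preserves morphisms. Hence an $(\A,\N)$-module is injective exactly when the corresponding $\NR$-module is injective. So it suffices to show that $\Hom_{\mathbb{Z}}(\NR,D)$ is an injective left $\NR$-module.

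For that, I would use the adjunction isomorphism
\[
\Hom_{\NR}(M,\Hom_{\mathbb{Z}}(\NR,D))\cong \Hom_{\mathbb{Z}}(M,D),\qquad \phi\mapsto (m\mapsto \phi(m)(1)),
\]
whose inverse sends $\psi$ to $m\mapsto (x\mapsto\psi(xm))$. This isomorphism is natural in $M$. Since $D$ is divisible, it is an injective $\mathbb{Z}$-module by Baer's criterion. Therefore $\Hom_{\mathbb{Z}}(-,D)$ is exact, and so is $\Hom_{\NR}(-,\Hom_{\mathbb{Z}}(\NR,D))$. Concretely, given a monomorphism $f:M\to M'$ and $g:M\to \Hom_{\mathbb{Z}}(\NR,D)$, I would extend the $\mathbb{Z}$-map $m\mapsto g(m)(1)$ from $M$ to $M'$ using injectivity of $D$, and then transport it back through the adjunction to get $\overline g$ with $\overline g\circ f=g$.

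The main point needing care is the identification in the first step. There, one must confirm that the $\A$-action and $\N_\ell$ together genuinely assemble into the $\NR$-action, so that Theorem~\mref{thm:NM} applies verbatim. The remaining steps are standard.
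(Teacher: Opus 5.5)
Your proof is correct, but it takes a different route from the paper's.

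The paper does not use the adjunction. Instead it verifies the hypothesis of its Nijenhuis Baer criterion, Proposition~\ref{prop:baercri}. Given a left Nijenhuis ideal $(S,N_Q|_S)$ and an $(A,N)$-homomorphism $h\colon S\to \Hom_{\mathbb{Z}}(U_N(A),D)$, it sets $\phi(s)=h(s)(1)$. It then extends $\phi$ to $\psi\colon U_N(A)\to D$ using injectivity of $D$ over $\mathbb{Z}$, and puts $\overline{h}(x)(y)=\psi(yx)$. This is exactly your inverse adjunction map, but applied only to ideal inclusions $S\hookrightarrow U_N(A)$. The passage to arbitrary monomorphisms is left to the Zorn's-lemma argument inside Proposition~\ref{prop:baercri}.

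You apply the same construction to an arbitrary monomorphism $f\colon M\to M'$ directly. So you need neither the Baer criterion nor Zorn's lemma; your argument is the standard textbook proof transported through Theorem~\ref{thm:NM}.

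Your first step also supplies something the paper glosses over. To conclude $\overline{h}\circ i=h$, the paper only evaluates at $y=a\in A$ and at $y=Q$. That does not suffice on its own, because $h(s)$ is merely additive on $U_N(A)$. The complete check is $\psi(ys)=h(ys)(1)=(y\,h(s))(1)=h(s)(y)$ for all $y\in U_N(A)$, using $ys\in S$. This requires knowing that the $U_N(A)$-action on $\Hom_{\mathbb{Z}}(U_N(A),D)$ coming from Theorem~\ref{thm:NM} is $(yf)(x)=f(xy)$, which is precisely the identification you make.

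The paper's route has the merit of putting to use the criterion it has just proved. Yours is shorter, more self-contained, and closes that verification gap explicitly.
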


\begin{proof}
By Lemma~\mref{lem:isnijm}, $(\Hom_{\mathbb{Z}}(\NR,D),\N_{\ell})$ is a left $(\A,\N)$-module. Now we show that for any Nijenhuis ideal $(S, \N_Q|_S)$ of $(\NR, \N_Q)$ and any left $(A,\N)$-module homomorphis $h:(S, \N_Q|_S) \rightarrow (\Hom_{\mathbb{Z}}(\NR,D),\N_{\ell})$, there is a left $(\A,\N)$-module homomorphism $\overline{h}: (\NR, \N_Q) \rightarrow (\Hom_{\mathbb{Z}}(\NR,D),\N_{\ell})$ such that the following diagram commutes: 
$$
\xymatrix{
&(\Hom_{\mathbb{Z}}(\NR,\N_Q), \N_{\ell})\ar@{<--}^{\overline{h}}[dr]&\\
0\ar[r]&(S, \N_Q|_{S})\ar_{i}[r]\ar^{h}[u]&(\NR, \N_Q).}
$$

Define a map $\phi: S\rightarrow D$ by $\phi(s)=h(s)(1_{\NR})$. Then $\phi$ is a $\mathbb{Z}$-module homomorphism.  Since an abelian group is an injective $\mathbb{Z}$-module if and only if it is divisible~\mcite{Rot}, $D$ is an injective $\mathbb{Z}$-module. Then there is a $\mathbb{Z}$-module homomorphism $\psi: \NR \rightarrow D$ such that $\phi=\psi \circ i$.

Then define a map $\overline{h}: (\NR, \N_Q) \rightarrow (\Hom_{\mathbb{Z}}(\NR,D),\N_{\ell})$ by
\[
\overline{h}(x)(y)= \psi(yx), \, \text{for $x, y \in \NR$.}
\]
For $a \in \A$, we have 
\[
\overline{h}(ax)(y)=\psi(y(ax))=\psi((ya)x)=\overline{h}(x)(ya)=(a \overline{h}(x))(y),
\]
hence $\overline{h}$ is an $\A$-module homomorphism. Moreover,
\[
\big( \overline{h} (\N_Q(x)) \big)(y)=\overline{h}(Qx)(y)=\psi(yQx)=(\overline{h}(x))(yQ)=\N_{\ell}(\overline{h}(x))(y),
\]
hence $\overline{h}: (\NR, \N_Q) \rightarrow (\Hom_{\mathbb{Z}}(\NR,D),\N_Q)$ is an $(\A,\N)$-module homomorphism.

Let $s \in S$. For $x=a \in \A$, we have
\begin{align*}
\big( (\overline{h} \circ i)(s) \big)(a)=(\overline{h}(s))(a)=\psi(as)=\phi(as)=(h(as))(1_{\NR})=(ah(s))(1_{\NR})=(h(s))(a),
\end{align*}
and for $x=Q$, we have 
\begin{align*}
& \big((\overline{h} \circ i) (s)\big) (Q)=(\overline{h}(s))(Q)=\psi(Qs)=\phi(Qs)=(h(Qs))(1_{\NR})\\
=& (h(\N_Q(s)))(1_{\NR})=(\N_{\ell}(h(s)))(1_{\NR})=h(s)(Q),
\end{align*}
which implies $\overline{h} \circ i=h$. Thus $(\Hom_{\mathbb{Z}}(\NR,D),\N_{\ell})$ is an injective left $(\A,\N)$-module by Proposition~\mref{prop:baercri}
\end{proof}

\begin{theorem}
Let $(\A,\N)$ be a Nijenhuis algebra and $(M,\NM)$ a left $(\A,\N)$-module. Then $(M,\NM)$ can be embedded into an injective left $(\A,\N)$-module.
\mlabel{thm:sinj}
\end{theorem}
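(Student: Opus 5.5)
The plan follows the classical argument that every module embeds in an injective one, using Proposition~\mref{prop:injnijm} and the isomorphism of categories in Theorem~\mref{thm:NM}.

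First, view $M$ as a left $\NR$-module, with $Q\cdot m=\NM(m)$. Forgetting structure, $M$ is an abelian group, and every abelian group embeds into a divisible group. So choose a divisible abelian group $D$ and an injective group homomorphism $\iota: M\to D$; for instance, write $M$ as a quotient of a free abelian group and embed it into the corresponding quotient of a $\mathbb{Q}$-vector space.

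Next, define
\[
\Phi: M\to \Hom_{\mathbb{Z}}(\NR,D),\quad \Phi(m)(x)=\iota(xm),\ x\in \NR .
\]
Each $\Phi(m)$ is additive in $x$, and $\Phi$ is additive in $m$. For $a\in A$,
\[
\Phi(am)(x)=\iota(xam)=\Phi(m)(xa)=(a\Phi(m))(x),
\]
using the action from Lemma~\mref{lem:isnijm}. So $\Phi$ is a left $A$-module homomorphism, and in fact a $\NR$-module homomorphism. For the operator,
\[
\Phi(\NM(m))(x)=\iota(xQm)=\Phi(m)(xQ)=\N_{\ell}(\Phi(m))(x),
\]
so $\Phi\circ\NM=\N_{\ell}\circ\Phi$. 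Hence $\Phi$ is a left $(A,\N)$-module homomorphism. For injectivity, suppose $\Phi(m)=0$. Evaluating at $x=1_{\NR}$ gives $\iota(m)=0$, hence $m=0$ because $\iota$ is injective.

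Finally, Proposition~\mref{prop:injnijm} says $(\Hom_{\mathbb{Z}}(\NR,D),\N_{\ell})$ is injective. So $\Phi$ embeds $(M,\NM)$ into an injective left $(A,\N)$-module. By Proposition~\mref{prop:kernel}, $(M,\NM)$ is isomorphic to the submodule $\IIm\Phi$.

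The only step that is not formal is the embedding of an abelian group into a divisible group. This is a standard fact, and I would cite~\mcite{Rot} for it. Everything else is a direct check of the two identities above, which use only associativity in $\NR$ and the definition of $\N_\ell$.
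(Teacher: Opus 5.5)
Your proof is correct and is essentially the paper's argument. The paper first maps $m\mapsto\varphi_m\in\Hom_{\mathbb{Z}}(U_N(A),M)$ with $\varphi_m(x)=xm$, then composes with $(i_M)_{\ast}$, where $i_M\colon M\to D$ embeds $M$ into a divisible group; your $\Phi$ is exactly this composite $(i_M)_{\ast}\circ f$, and doing it in one step lets you check injectivity directly at $1_{U_N(A)}$, whereas the paper leaves the injectivity of $(i_M)_{\ast}$ implicit.
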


\begin{proof}
By Theorem~\mref{thm:NM}, $M$ is a left $\NR$-module. For each $m \in M$, define a map 
\[
\varphi_m: \NR \rightarrow M, \, x \mapsto xm. 
\]
Then $\varphi_m \in \Hom_{\mathbb{Z}}(\NR, M)$ and define 
\[
f: (M,\NM) \rightarrow (\Hom_{\mathbb{Z}}(\NR, M), \N_{\ell}), \, m \mapsto \varphi_m.
\]
For any $a \in \A$, $x\in \NR$ and $m\in M$, we have
\begin{align*}
f(am)(x)= \varphi_{am}(x)=x(am)=(xa)m=\varphi_{m}(xa)=(a\varphi_{m})(x)=(af(m))(x),
\end{align*}
so $f$ is an $\A$-module homomorphism. Moreover,
\begin{align*}
& (f(\NM(m)))(x)=\varphi_{\NM(m)}(x)=x \NM(m)=x(Qm)=(xQ)m\\
=& \varphi_{m}(xQ)=(\N_{\ell}(\varphi_m))(x)=(\N_{\ell}(f(m)))(x),
\end{align*}
hence $f \circ \NM=\N_{\ell} \circ f$ and $f$ is an $(\A,\N)$-module homomorphism. 

Now we show that $f$ is a monomorphism. For any $m_1, m_2\in M$, if $f(m_1)=f(m_2)$, i.e. $\varphi_{m_1}=\varphi_{m_2}$, then $xm_1=\varphi_{m_1}(x)=\varphi_{m_2}(x)=xm_2$ for all $x\in \NR$. In particular, taking $x =1_{\NR}$ yields $m_1=m_2$.

Since every abelian group can be embedded into a divisible abelian group~\mcite{Rot}, there exists an embedding map $i_M: M \rightarrow D$ with $D$ a divisible abelian group.
Then we have 
\[(i_M)_{\ast}: (\Hom_{\mathbb{Z}}(\NR,M), \N_{\ell}) \rightarrow (\Hom_{\mathbb{Z}}(\NR,D), \N_{\ell}), \, \varphi \mapsto i_M \circ \varphi.
\]
For $a \in A$ and $x \in \NR$, we have 
\begin{align*}
& ((i_M)_{\ast}(a \varphi))(x)=(i_M \circ (a \varphi))(x)=i_M((a\varphi)(x))\\
=& i_M(\varphi(xa))=(i_M \circ \varphi)(xa)=(a(i_M \circ \varphi))(x)=(a((i_M)_{\ast}(\varphi)))(x),
\end{align*}
so $(i_M)_{\ast}(a \varphi)=a((i_M)_{\ast}(\varphi))$ and $(i_M)_{\ast}$ is an $\A$-module homomorphism. Moreover,
\begin{align*}
& ((i_M)_{\ast}(\N_{\ell}(\varphi)))(x)=(i_M \circ (\N_{\ell}(\varphi)))(x)=i_M((\N_{\ell}(\varphi))(x))=i_M(\varphi(xQ))\\
=& (i_M \circ \varphi)(xQ)=((i_M)_{\ast}(\varphi))(xQ)=(\N_{\ell}((i_M)_{\ast}(\varphi)))(x),
\end{align*}
so $(i_M)_{\ast}(\N_{\ell}(\varphi))=\N_{\ell}((i_M)_{\ast}(\varphi))$ and $(i_M)_{\ast}$ is an $(\A,\N)$-module homomorphism. Then $(i_M)_{\ast} \circ f: (M, \NM) \rightarrow (\Hom_{\mathbb{Z}}(\NR,D), \N_{\ell})$ is an $(\A,\N)$-module homomorphism and by Proposition~\mref{prop:injnijm}, $(\Hom_{\mathbb{Z}}(\NR,D), \N_{\ell})$ is an injective $(\A,\N)$-module. This completes the proof.
\end{proof}

\section{Flat Nijenhuis modules}
\mlabel{sec:flat}

In this section, we introduce the tensor product of Nijenhuis modules and then we study flat Nijenhuis modules.

\subsection{Tensor product of Nijenhuis modules}
To define the tensor product of Nijenhuis modules, we first introduce the concept of right Nijenhuis modules.

\begin{defn}
Let $(A,\N)$ be a Nijenhuis algebra. A {\bf right (Nijenhuis) $(A,\N)$-module} $(M,\NM)$ is a
right $A$-module $M$ together with a linear operator $\NM: M \to M$ such that
\begin{align}
\NM(m) \N(a)=\NM( \NM(m)a + m\N(a)-\NM (ma)), \, \text{for any } a\in A, m\in M. \mlabel{eq:rnm}
\end{align}
\end{defn}

The definition of right $(A,\N)$-module homomorphisms is analogous to that of left $(A,\N)$-module homomorphisms and is omitted here.
Denote by ${\bf Mod}_{(A,\N)}$ the category of right $(A,\N)$-modules.

\begin{defn}
Let $(\A,\N)$ be a Nijenhuis algebra, $(M_1,\N_{M_1})$  a right $(\A,\N)$-module and $(M_2,\N_{M_2})$ a left $(\A,\N)$-module.
\begin{enumerate}
\item
Let $G$ be an (additive) abelian group. A map $f: M_1 \times M_2 \rightarrow G$ is called $(\A,\N)$-{\bf bilinear} if for all $m_1, m'_1\in M$, $m_2, m'_2\in M_2$ and $a \in \A$, we have
\begin{align*}
f(m_1+m'_1, m_2)&=f(m_1, m_2)+f(m'_1, m_2),\\
f(m_1, m_2+m'_2)&=f(m_1, m_2)+f(m_1, m'_2),\\
f(m_1 a, m_2)&=f(m_1, am_2),\\
f(\N_{M_1}(m_1), m_2)&=f(m_1, \N_{M_2}(m_2)).
\end{align*}
\item
The ${\bf tensor~product}$ of  $(M_1,\N_{M_1})_{(\A,\N)}$ and $(_{(\A,\N)}M_2,\N_{M_2})$ over $(\A,\N)$, denoted by $M_1 \ot_{(\A,\N)} M_2$, is an abelian group together with
an $(\A,\N)$-bilinear map
$$ \iota: M_1 \times M_2 \rightarrow M_1\ot_{(\A,\N)} M_2$$ satisfying that for every abelian group $G$ and every $(\A,\N)$-bilinear map $f: M_1 \times M_2 \rightarrow G$,
there exists a unique abelian group homomorphism $\widetilde{f}: M_1 \otimes_{(\A,\N)} M_2 \rightarrow G$ such that the following diagram commutes
$$
\xymatrix{
&M_1\times M_2 \ar^{\iota}[rr] \ar_{f}[dr]&&M_1 \otimes_{(\A,\N)} M_2.\ar@{-->}^{\widetilde{f}}[dl]\\
&&G&}
$$
\end{enumerate}
\mlabel{def:rbmt}
\end{defn}

The following result gives a construction of the tensor product of Nijenhuis modules.

\begin{theorem}
Let $(\A,\N)$ be a Nijenhuis algebra and let $(M_1,\N_{M_1})$ be a right $(A,\N)$-module, $(M_2,\N_{M_2})$ a left $(A, \N)$-module.
Let $F$ be the free abelian group on the set $M_1 \times M_2$ and $I$ the subgroup of $F$ generated by  elements of the form
\begin{align*}
& (m_1+m'_1, m_2) - (m_1, m_2)-(m'_1, m_2),\\
& (m_1, m_2+m'_2)- (m_1, m_2) - (m_1, m'_2),\\
& (m_1a, m_2)- (m_1, am_2), \\ 
& (\N_{M_1}(m_1), m_2)-(m_1, \N_{M_2}(m_2)),
\end{align*}
for all $m_1, m'_1\in M_1, m_2, m'_2\in M_2$ and $a\in \A$.
Then $F/I$ with the natural map $\iota: M_1 \times M_2 \rightarrow F \rightarrow F/I$ is the tensor product of $(M_1,\N_{M_1})$ and $(M_2,\N_{M_2})$ over $(\A,\N)$
\mlabel{thm:rbmt}
\end{theorem}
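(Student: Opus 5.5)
We prove the statement by the standard construction of the tensor product of modules over a ring, adding the fourth family of relations. It has two parts: first, that $\iota$ is $(\A,\N)$-bilinear; second, that the pair $(F/I,\iota)$ has the universal property of Definition~\mref{def:rbmt}. Write $m_1\ot m_2:=\iota(m_1,m_2)=(m_1,m_2)+I$.

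\emph{Bilinearity of $\iota$.} Each of the four defining identities of an $(\A,\N)$-bilinear map is, after applying $\iota$, exactly the image in $F/I$ of one of the four families of generators of $I$. For example,
\[
\iota(\N_{M_1}(m_1),m_2)-\iota(m_1,\N_{M_2}(m_2))=\big((\N_{M_1}(m_1),m_2)-(m_1,\N_{M_2}(m_2))\big)+I=0 .
\]
The other three identities follow in the same way, so $\iota$ is $(\A,\N)$-bilinear.

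\emph{Existence of $\widetilde f$.} Let $G$ be an abelian group and $f:M_1\times M_2\to G$ an $(\A,\N)$-bilinear map. Since $F$ is free abelian on the set $M_1\times M_2$, there is a unique group homomorphism $\hat f:F\to G$ with $\hat f(m_1,m_2)=f(m_1,m_2)$. We check that $\hat f$ kills each generator of $I$. This holds because each generator is the formal difference of the two sides of one of the four bilinearity identities, and $f$ satisfies all four. The fourth family is handled by the condition $f(\N_{M_1}(m_1),m_2)=f(m_1,\N_{M_2}(m_2))$. Hence $I\subseteq \Ker \hat f$. Therefore $\hat f$ factors through the quotient as a homomorphism $\widetilde f:F/I\to G$ with $\widetilde f\circ\iota=f$.

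\emph{Uniqueness of $\widetilde f$.} The elements $m_1\ot m_2$, for $(m_1,m_2)\in M_1\times M_2$, generate $F/I$ as an abelian group, because their preimages $(m_1,m_2)$ form a basis of $F$. Any group homomorphism $g:F/I\to G$ with $g\circ\iota=f$ is determined by its values $g(m_1\ot m_2)=f(m_1,m_2)$ on these generators. So $g=\widetilde f$.

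No step is a real obstacle. The only point that needs care is confirming that the additional relation involving $\N_{M_1}$ and $\N_{M_2}$ is preserved both when defining $\iota$ and when factoring $\hat f$. It causes no difficulty, since it is imposed as a generator of $I$ and appears as an axiom of bilinearity in the same form.
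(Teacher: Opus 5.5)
Your proposal is correct and follows the paper's proof: extend $f$ to $F$ by freeness, note that the extension kills the four families of generators of $I$, and pass to the quotient, with uniqueness because the pure tensors $\iota(m_1,m_2)$ generate $F/I$. You also verify explicitly that $\iota$ is itself $(\A,\N)$-bilinear, which the definition requires and the paper leaves implicit.
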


\begin{proof}
For $(m_1,m_2)\in M_1\times M_2$, denote the pure tensor by $m_1 \ot_{(\A,\N)} m_2:= \iota((m_1,m_2))$. Then every element of $F/I$ is a finite sum of pure tensors. 

We need to prove that for any abelian group $G$ and for any $(\A,\N)$-bilinear map $f: M_1\times M_2 \rightarrow G$, there is a unique abelian group homomorphism $\tilde{f}: F/I \to G$ such that $f=\tilde{f} \circ \iota$.

Since $F$ is the free abelian group on the set $M_1 \times M_2$, the map $f$ extends uniquely to an abelian group homomorphism $\overline{f}: F \to G$. By $f$ being $(A,\N)$-bilinear, $\overline{f}$ vanishes on the generators of $I$, hence $\overline{f}$ induces a unique abelian group homomorphism $\tilde{f}: F/I \to G$. For $m_1 \in M_1$ and $m_2 \in M_2$, we have 
\[
(\tilde{f} \circ \iota)((m_1,m_2))=\tilde{f} (m_1 \ot_{(\A,\N)} m_2)=\overline{f}((m_1,m_2))=f((m_1,m_2)),
\]
so $\tilde{f}: F/I \to G$ is the unique abelian group homomorphism such that $f=\tilde{f} \circ \iota$, as required. 
\end{proof}

Denote the category of the abelian groups by {\bf Ab}.

\begin{prop}
Let $(\A,\N)$ be a Nijenhuis algebra.
\begin{enumerate}
\item Let $(M',\N_{M'})$ be a right $(A,\N)$-module. Then 
there is an additive functor $F_{M'}:  {\bf _{(A,\N)} Mod} \to {\bf Ab}$ defined by
\begin{align*}
F_{M'}(M) = M \ot_{(A,\N)} M' , \quad F_{M'}(g) = \id_{M'} \ot_{(A,\N)} g,
\end{align*}
where $(M, \NM), (M_1, \N_{M_1}) \in  {\bf _{(A,\N)} Mod}$ and $g:(M,\N_{M}) \to (M_1,\N_{M_1})$ is a left $(A,\N)$-module homomorphism.
\mlabel{it:tc1}

\item Let $(M,\NM)$ be a left $(A,\N)$- module. Then
there is an additive functor $G_{M}: {\bf Mod_{(A,\N)}} \to {\bf Ab}$ defined by
\begin{align*}
G_{M}(M') = M \ot_{(A,\N)} M', \quad G_M(g) = g \ot_{(A,\N)} \id_M,
\end{align*}
where $(M',\N_{M'}), (M'_1, \N_{M'_1}) \in {\bf Mod_{(A,\N)}}$ and $g:(M',\N_{M'}) \to (M'_1,\N_{M'_1})$ is a right $(A, \N)$-module homomorphism.
\mlabel{it:tc2}
\end{enumerate}
\mlabel{prop:tc}
\end{prop}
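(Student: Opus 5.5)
Note first a notational wrinkle in the statement. Since $M_1\ot_{(A,\N)}M_2$ was defined with the right module in the first slot, I read $F_{M'}(M)$ as $M'\ot_{(A,\N)}M$ with $F_{M'}(g)=\id_{M'}\ot g$. Symmetrically, I read $G_M(M')$ as $M'\ot_{(A,\N)}M$ with $G_M(g)=g\ot\id_M$. The proof is the same whichever order is used. I will treat part (\mref{it:tc1}) in detail; part (\mref{it:tc2}) is identical with the roles of the factors swapped.

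\emph{Step 1: define $F_{M'}(g)$.} Let $g:(M,\NM)\to(M_1,\N_{M_1})$ be a morphism. Consider
\[
\beta: M'\times M\to M'\ot_{(A,\N)}M_1,\qquad \beta(m',m)=m'\ot g(m).
\]
I will check that $\beta$ is $(A,\N)$-bilinear. Additivity in each variable follows from additivity of $g$ and of $\iota$. The $A$-balanced condition holds because
\[
m'a\ot g(m)=m'\ot a\,g(m)=m'\ot g(am),
\]
using that $g$ is $A$-linear. The operator condition holds because
\[
\N_{M'}(m')\ot g(m)=m'\ot \N_{M_1}(g(m))=m'\ot g(\NM(m)).
\]
This last check is the only place where the compatibility $g\circ\NM=\N_{M_1}\circ g$ is used, and it is the one step specific to the Nijenhuis setting. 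By the universal property of Theorem~\mref{thm:rbmt}, $\beta$ induces a unique group homomorphism $\id_{M'}\ot g$ sending $m'\ot m$ to $m'\ot g(m)$.

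\emph{Step 2: functoriality.} Pure tensors generate the tensor product, by the construction in Theorem~\mref{thm:rbmt}, and the maps produced by the universal property are unique. So it suffices to compare maps on pure tensors. Both $\id\ot\id_M$ and $\id$ send $m'\ot m$ to $m'\ot m$, hence they agree. For composable $g$ and $h$, both $\id\ot(hg)$ and $(\id\ot h)(\id\ot g)$ send $m'\ot m$ to $m'\ot h(g(m))$, hence they agree.

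\emph{Step 3: additivity.} Morphism sets are abelian groups by Proposition~\mref{prop:hom}. For morphisms $g,g'$, both $\id\ot(g+g')$ and $\id\ot g+\id\ot g'$ send $m'\ot m$ to $m'\ot g(m)+m'\ot g'(m)$, using additivity of $\iota$ in the second variable. Uniqueness then gives $\id\ot(g+g')=\id\ot g+\id\ot g'$.

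None of these steps presents a real obstacle; the only substantive verification is the operator compatibility in Step 1.
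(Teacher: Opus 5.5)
Your proposal is correct and follows essentially the same route as the paper, which also checks preservation of identities, composition, and sums by evaluating on pure tensors. The one addition is your Step 1: you verify via the universal property of Theorem~\ref{thm:rbmt} that $m'\ot m\mapsto m'\ot g(m)$ is well defined, which is exactly where $g\circ\NM=\N_{M_1}\circ g$ enters; the paper leaves this step implicit, and your fixed factor order also cleans up the paper's inconsistent $M\ot_{(A,\N)}M'$ versus $\id_{M'}\ot g$ notation.
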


\begin{proof}
(\mref{it:tc1}) For left $(A,\N)$-module homomorphism $g_1:(M_1,\N_{M_1}) \to (M_2,\N_{M_2})$ with $(M_2, \N_{M_2}) \in {\bf _{(A,\N)} Mod}$,
\begin{align*}
F_{M'}(g\circ g_1) = \id_{M'} \ot_{(A,\N)} (g\circ g_1) = (\id_{M'} \ot_{(A,\N)} g)\circ(\id_{M'} \ot_{(A,\N)} g_1) = F_{M'}(g)\circ F_{M'}(g_1). 
\end{align*}
And $F_{M'}(\id_M) = \id_{M} \ot_{(A,\N)} \id_{M'}$, hence $F_{M'}$ is a functor. Moreover, for left $(A,\N)$-module homomorphisms $g,h: (M,\NM) \to (M_1, \N_{M_1})$ and $m \ot_{(A,\N)} m' \in M \ot_{(A,\N)} M'$, we have
\begin{align*}
& F_{M'}(g+h)(m \ot_{(A,\N)} m') = m \ot_{(A,\N)} ((g+h)(m')) \\
&= m \ot_{(A,\N)} (g(m') + h(m')) \\
&= m\ot_{(A,\N)} g(m') + m \ot_{(A,\N)} h(m') \\
&= ( F_{M'}(g) + F_{M'}(h)) (m\ot_{(A,\N)} m'),
\end{align*}
hence $F_{M'}(g+h)=F_{M'}(g)+F_{M'}(h)$. This completes the proof.

(\mref{it:tc2}) The proof is similar to Item~(\mref{it:tc1}).
\end{proof}

Let $(A,\N)$ and $(A',\N_{A'})$ be two Nijenhuis algebras. We call $(M,\NM)$ a {\bf $(A,\N)$-$(A',\N_{A'})$-bimodule} if it is a $A$-$A'$-bimodule, a left $(A,\N)$-module and a right $(A',\N_{A'})$-module.

\begin{prop}\label{TP-m}
Let $(A,\N)$ and $(A',\N')$ be two Nijenhuis algebras.
\begin{enumerate}
\item If $(M,\NM)$ is a $(A,\N)$-$(A',\N_{A'})$-bimodule and $(M', \N_{M'})$ is a left $(A',\N_{A'})$-module,
then $(M \ot_{(A,\N)} M',\N_{\ell})$ is a left $(A,\N)$-module, where
\begin{align*}
a(m \ot_{(A,\N)} m'):= (am) \ot_{(A,\N)} m',\quad
\N_{\ell} (m\ot_{(A,\N)} m'):= \NM(m) \ot_{(A,\N)} m',
\end{align*}
for $a \in A$, $m \in M$ and $m' \in M'$.
\mlabel{it:es1}
\item If $(M,\NM)$ is a right $(A,\N)$-module and $  (M',\N_{M'})$ is a $(A,\N)$-$(A',\N_{A'})$-bimodule,
then $(M \ot_{(A,\N)} M',\N_{r})$ is a right $(A',\N_{A'})$-module, where
\begin{align*}
(m \ot_{(A, \N)} m')a':= m \ot_{(A,\N)} (m'a'), \quad
\N_r (m\ot_{(A,\N)} m') := m \ot_{(A,\N)} \N_{M'}(m'), 
\end{align*}
for $a' \in A'$, $m \in M$ and $m' \in M'$.
\mlabel{it:es2}
\end{enumerate}
\mlabel{prop:es}
\end{prop}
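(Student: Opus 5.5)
The plan is to build both structures from the universal property of the tensor product in Theorem~\mref{thm:rbmt}, then check the Nijenhuis identity on pure tensors. The statement's subscripts are garbled. In (\mref{it:es1}) the tensor product must be taken over $(A',\N_{A'})$, since $M$ is a right $(A',\N_{A'})$-module and $M'$ a left one. Symmetrically, in (\mref{it:es2}) it is over $(A,\N)$. I read it that way and treat only (\mref{it:es1}); (\mref{it:es2}) is the mirror argument.

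\emph{Step 1 (the operator $\N_\ell$).} Consider the map $(m,m')\mapsto \NM(m)\ot m'$ into $M\ot_{(A',\N_{A'})}M'$. I check it is $(A',\N_{A'})$-bilinear in the sense of Definition~\mref{def:rbmt}:
\begin{itemize}
\item additivity in each variable is immediate;
\item the $\N$-relation holds, since $\NM(\NM(m))\ot m'=\NM(m)\ot \N_{M'}(m')$ is itself an instance of that relation in the target;
\item the balancing relation is $\NM(ma')\ot m'=\NM(m)\ot a'm'=(\NM(m)a')\ot m'$.
\end{itemize}
Theorem~\mref{thm:rbmt} then gives a well-defined additive $\N_\ell$.

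\emph{Step 2 (the $A$-action).} For each $a\in A$, consider $(m,m')\mapsto (am)\ot m'$. Additivity is clear, and balancing follows from the bimodule law $a(ma')=(am)a'$. The remaining relation requires $(a\NM(m))\ot m'=(am)\ot\N_{M'}(m')=\NM(am)\ot m'$. The resulting maps then satisfy the module axioms because they do so on pure tensors.

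\emph{Step 3 (Nijenhuis identity).} On a pure tensor, the identity (\mref{eq:lnm}) for $\N_\ell$ reduces verbatim to the identity for $\NM$ in the first factor. It therefore holds on generators, and by additivity on all of $M\ot_{(A',\N_{A'})}M'$.

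I expect the main obstacle to be the two compatibility checks in Steps 1 and 2. In Step 1 the balancing relation needs $\NM(ma')\ot m'=(\NM(m)a')\ot m'$. In Step 2 the $\N$-relation needs $(a\NM(m))\ot m'=\NM(am)\ot m'$. Neither follows from the left or right Nijenhuis identities alone. Both hold if $\NM$ is right $A'$-linear and left $A$-linear, that is, if the bimodule operator commutes with the opposite-side action.

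First I would try to derive these equalities inside the quotient $F/I$, using the generators of $I$ together with (\mref{eq:lnm}) and (\mref{eq:rnm}). If that fails, I would make this compatibility explicit as part of the notion of an $(A,\N)$-$(A',\N_{A'})$-bimodule (the natural analogue of the bimodule condition). Under that hypothesis Steps 1–3 go through as described.
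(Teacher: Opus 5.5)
Your Step 3 is the paper's entire proof. The paper opens with ``We know that $M\ot_{(A,\N)}M'$ is a left $A$-module'' and then verifies Eq.~(\mref{eq:lnm}) on pure tensors. It never checks that the $A$-action or $\N_\ell$ is well defined on the Nijenhuis tensor product, which, as you say, must be taken over $(A',\N_{A'})$. Your Steps 1 and 2 isolate exactly the two conditions this needs, and your suspicion is right: they do not follow from the axioms, and the proposition is false as stated. So the first branch of your plan, deriving the equalities inside $F/I$, cannot succeed.

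\emph{Counterexample for Step 2.} Take $A=\bfk\times\bfk$ with $\N(\alpha,\beta)=(\alpha,\alpha)$; a direct check shows this is a Nijenhuis operator. Let $(A',\N_{A'})=(\bfk,0)$, $M=(A,\N)$ with $\bfk$ acting by scalars on the right (the right identity is vacuous), and $M'=(\bfk,0)$. The fourth relation reads $\N(m)\ot\mu=m\ot 0=0$, so $M\ot_{(\bfk,0)}M'\cong A/\bfk(1,1)$. Hence $(1,1)\ot 1=\N(1,0)\ot 1=0$. Yet $(1,0)\cdot((1,1)\ot 1)$ would have to equal $(1,0)\ot 1$, which is nonzero: the map $((\alpha,\beta),\mu)\mapsto(\beta-\alpha)\mu$ is $(\bfk,0)$-bilinear and takes the value $-1$ there.

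\emph{Counterexample for Step 1.} Take $(A,\N)=(\bfk,0)$ and $(A',\N_{A'})=(\bfk\times\bfk,0)$ with $e_1=(1,0)$, $e_2=(0,1)$. Let $M=M'=A'$ with $\NM(\alpha,\beta)=(0,\alpha)$ and $\N_{M'}(\alpha,\beta)=(\beta,0)$. Since $\N_{A'}=0$, the Nijenhuis identities reduce to $\NM(\NM(m)a'-\NM(ma'))=0$ and $\N_{M'}(a'\N_{M'}(m')-\N_{M'}(a'm'))=0$. Both hold because the inner terms lie in $\bfk e_2=\Ker\NM$ and $\bfk e_1=\Ker\N_{M'}$ respectively. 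Now $e_1\ot e_2=e_1e_1\ot e_2=e_1\ot e_1e_2=0$, but $\N_\ell(e_1\ot e_2)$ would be $e_2\ot e_2\neq 0$. Indeed, $(m,m')\mapsto m_1m_1'+m_2m_2'$ is $(A',0)$-bilinear (both sides of the fourth relation give $m_1m_2'$) and equals $1$ at $(e_2,e_2)$. Mirror images of these examples break (\mref{it:es2}) as well.

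So commit to the second branch. The hypothesis actually needed is the pair of identities you wrote down, read in $M\ot_{(A',\N_{A'})}M'$:
$(a\NM(m))\ot m'=\NM(am)\ot m'$ and $\NM(ma')\ot m'=(\NM(m)a')\ot m'$.
Each is necessary and sufficient for the corresponding formula to define a map. With them, your Steps 1--3 are a complete proof of the corrected statement, which is more than the paper's argument establishes.

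Your stronger sufficient condition also works, provided you impose both one-sided linearities: left $A$-linearity for Step 2 and right $A'$-linearity for Step 1. Be aware, though, that left $A$-linearity of $\NM$ already makes Eq.~(\mref{eq:lnm}) for $\NM$, and hence your Step 3, automatic. That version of the result therefore says little about genuinely Nijenhuis bimodules.
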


\begin{proof}
(\mref{it:es1}) We know that $M \ot_{(A,\N)} M'$ is a left $A$-module. Moreover, for $a\in A, m\in M, m'\in M'$, we ahve
\begin{align*}
& \N(a) \N_{\ell}(m\ot_{(A,\N)} m')=  \N(a) ( \NM(m) \ot_{(A,\N)} m')\\
=& (\N(a) \NM(m) \ot_{(A,\N)} m'\\
=& (\NM(\N(a)m) + a \NM(m)-\NM(am)) \ot_{(A,\N)} m' \\ 
=& \NM(\N(a) m) \ot_{(A,\N)} m'+ \NM(a \NM(m)) \ot_{(A,\N)} m'- \NM(\NM(am)) \ot_{(A,\N)} m'\\
=& \N_{\ell}(\N(a)(m \ot_{(A,\N)} m'))+ \N_{\ell}(a \N_{\ell}(m \ot_{(A,\N)} m'))-\N_{\ell}(\N_{\ell}(a(m \ot_{(A,\N)} m'))),
\end{align*}
as required.

(\mref{it:es2}) The proof is similar to Item~(\mref{it:es1}).
\end{proof}

\subsection{Flat Nijenhuis modules}
Similar to the classical case, we have that the Nijenhuis tensor product is right exact. To study the exactness of the tensor product, we introduce the concept of flat Nijenhuis modules.

\begin{defn}
Let $(A,\N)$ be a Nijenhuis algebra. A left $(A,\N)$-module $(M',\N_{M'})$ is $\bf flat$ if $- \ot_{(A,\N)} M'$ is an exact functor, that is, for any short exacts sequence of right $(A,\N)$-modules
$$
\xymatrix{
0\ar[r]&(M_1,\N_{M_1})\ar^{i}[r]&(M_2,\N_{M_2})\ar^{j}[r]&(M_3,\N_{M_3})\ar[r]& 0
}
$$
the induced short sequence
$$
\xymatrix{
0\ar[r]& M_1 \ot_{(A,\N)} M' \ar^{i \ot \id_{M'}}[r]& M_2 \ot_{(A,\N)} M' \ar^{ j\ot \id_{M'}}[r]& M_3\ot_{(A,\N)} M'\ar[r]& 0&}
$$
is also exact.
\end{defn}

Thus a left $(A,\N)$-module $(M',\N_{M'})$ is flat if and only if $- \ot_{(A,\N)} M'$ preserves injections.

Let $\{(M_{i},\N_{M_i})~|~i \in I \}$ be a family of left $(A,\N)$-modules. Then $\Big(\bigoplus_{i\in I}M_{i},\N_{\oplus M_i} \Big),$
where $\N_{\oplus M_i}$ is defined by
\begin{align*}
\N_{\oplus M_i}(m_{i})_{I} = (\N_{M_i}(m_{i}))_{I},   
\end{align*}
is also a left $(A,\N)$-module and it is the direct sum of $\{(M_i, \N_{M_i}) \mid i \in I \}$. For each $i\in I$, the natural injection $\iota_{i}: M_{i} \to \bigoplus_{i\in I} M_{i}$ satisfies $ \N_{\oplus M_i} \circ \iota_{i}=\iota_{i}\circ p_{i}$. The natural surjection $\pi_{i}: \bigoplus_{i\in I} M_{i} \to M_{i}$ satisfies $\N_{M_i}\circ \pi_{i}=\pi_{i}\circ \N_{\oplus M_i}$. 

\begin{lemma}\label{dir-sum-injective}
Let $(A, \N)$ be a Nijenhuis algebra and $\varphi_i: (M_{i},\N_{M_i}) \to (M'_{i},\N_{M'_i})$ a family of left $(A,\N)$-module homomorphisms with  $\{(M_{i},\N_{M_i})~|~i \in I \}$, $\{(M'_{i},\N_{M'_i})~|~i \in I \}$ two families of $(A,\N)$-modules. Then the $(A, \N)$-module homomorphism
$$\varphi:= \bigoplus_{i\in I}\varphi_i: \left( \bigoplus_{i\in I} M_{i},\N_{\oplus M_i} \right) \to \left( \bigoplus_{i\in I} M'_{i},\N_{\oplus M'_i} \right), \quad
(m_{i})_{I} \mapsto (\varphi_{i}(m_{i}))_{I},
$$
is injective if and only if each $\varphi_{i}$ is injective.
\end{lemma}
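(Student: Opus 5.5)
The plan is to argue componentwise, exactly as for ordinary modules. The Nijenhuis operators play no role in injectivity, which is a property of the underlying map of sets. The only preliminary is to note that $\varphi$ is indeed a left $(A,\N)$-module homomorphism.

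First I will check that $\varphi$ is well defined and is a morphism. For $(m_i)_I$ with finitely many nonzero entries, $(\varphi_i(m_i))_I$ also has finitely many nonzero entries, since $\varphi_i(0)=0$. $A$-linearity holds componentwise. Compatibility with the operators also holds componentwise:
\[
\varphi(\N_{\oplus M_i}(m_i)_I)=(\varphi_i(\N_{M_i}(m_i)))_I=(\N_{M'_i}(\varphi_i(m_i)))_I=\N_{\oplus M'_i}(\varphi((m_i)_I)).
\]

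For the implication ``each $\varphi_i$ injective $\Rightarrow$ $\varphi$ injective'', I will take $(m_i)_I$ with $\varphi((m_i)_I)=0$. Then $\varphi_i(m_i)=0$ for every $i$, so $m_i=0$ for every $i$. Since $\Ker\varphi=0$, $\varphi$ is injective.

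For the converse, I will use the identity $\varphi\circ\iota_i=\iota'_i\circ\varphi_i$, where $\iota_i,\iota'_i$ are the natural injections into $\bigoplus_i M_i$ and $\bigoplus_i M'_i$. Suppose $\varphi$ is injective and $\varphi_i(m)=0$. Then $\varphi(\iota_i(m))=\iota'_i(0)=0$, hence $\iota_i(m)=0$. Since $\iota_i$ is injective, $m=0$. Equivalently, $\varphi_i=\pi'_i\circ\varphi\circ\iota_i$ restricted appropriately.

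There is no real obstacle. The only point requiring care is the bookkeeping for the direct sum, namely the finite-support condition and the fact that the natural injections and projections commute with the operators. That fact was recorded just before the statement, and it reduces everything to a componentwise argument at the level of abelian groups.
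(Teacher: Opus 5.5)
Your proof is correct and takes essentially the paper's approach. The paper just notes that $\ker\varphi=\oplus_{i\in I}\ker\varphi_i$, and your two directions unpack exactly that identity: componentwise vanishing for one direction, and $\varphi\circ\iota_i=\iota'_i\circ\varphi_i$ for the other.
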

\begin{proof}
This follows from the fact that $\ker \varphi = \oplus_{i\in I} \ker \varphi_i$.
\end{proof}

\begin{lemma}\label{dir-otim-iso}
Let $(A, \N)$ be a Nijenhuis algebra, $ \{(M'_{i},\N_{M'_i})~|~i \in I \}$ a family of left $(A, \N)$-modules, and $(M, \N_{M})$ a right $(A, \N)$-module. Then $M \ot_{(A, \N)}(\oplus_{i\in I}M'_{i})\cong \oplus_{i\in I}(M \ot_{(A, \N) }M'_{i})$.
\end{lemma}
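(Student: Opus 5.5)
The plan is to prove the isomorphism directly from the universal property in Definition~\mref{def:rbmt}, using the explicit construction of Theorem~\mref{thm:rbmt}. Write $\iota_i\colon M'_i\to\oplus_{i\in I}M'_i$ for the canonical injections and $\pi_i$ for the canonical projections. As noted before Lemma~\mref{dir-sum-injective}, these are left $(A,\N)$-module homomorphisms: they are $A$-linear and commute with $\N_{\oplus M'_i}$ and the $\N_{M'_i}$.

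\emph{Step 1: the map $\Phi$.} Define
\[
f\colon M\times\big(\oplus_{i\in I}M'_i\big)\to\oplus_{i\in I}\big(M\ot_{(A,\N)}M'_i\big),\qquad (m,(m'_i)_I)\mapsto (m\ot_{(A,\N)}m'_i)_I .
\]
This is a finite family, since only finitely many $m'_i$ are nonzero. Biadditivity of $f$ is clear. The relation $f(ma,(m'_i))=f(m,a(m'_i))$ holds because the $A$-action on the direct sum is componentwise. The Nijenhuis relation
\[
f(\N_M(m),(m'_i))=f\big(m,\N_{\oplus M'_i}(m'_i)\big)
\]
holds because $\N_{\oplus M'_i}$ acts componentwise and each factor satisfies $\N_M(m)\ot m'_i=m\ot\N_{M'_i}(m'_i)$. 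Hence $f$ is $(A,\N)$-bilinear and induces a group homomorphism
\[
\Phi\colon M\ot_{(A,\N)}(\oplus_i M'_i)\to\oplus_i(M\ot_{(A,\N)}M'_i).
\]

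\emph{Step 2: the inverse $\Psi$.} For each $i$, the map $(m,m'_i)\mapsto m\ot_{(A,\N)}\iota_i(m'_i)$ is $(A,\N)$-bilinear, because $\iota_i$ is $A$-linear and intertwines the operators. It therefore induces $\psi_i=\id_M\ot\iota_i\colon M\ot_{(A,\N)}M'_i\to M\ot_{(A,\N)}(\oplus_iM'_i)$. By the universal property of the direct sum of abelian groups, the family $(\psi_i)$ gives $\Psi\colon\oplus_i(M\ot_{(A,\N)}M'_i)\to M\ot_{(A,\N)}(\oplus_iM'_i)$ with $\Psi((x_i)_I)=\sum_i\psi_i(x_i)$.

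\emph{Step 3: the two composites.} It suffices to check both composites on generators: pure tensors on one side, and elements supported in a single component on the other. First,
\[
\Psi\Phi\big(m\ot(m'_i)_I\big)=\sum_i m\ot\iota_i(m'_i)=m\ot\sum_i\iota_i(m'_i)=m\ot(m'_i)_I
\]
by additivity in the second variable. Second, $\Phi\Psi$ sends $m\ot m'_j$ placed in the $j$-th component to $\Phi(m\ot\iota_j(m'_j))$. This element has $j$-th component $m\ot m'_j$ and all other components $m\ot0=0$, so it is the original element. Since pure tensors generate $F/I$ (Theorem~\mref{thm:rbmt}) and single-component pure tensors generate the direct sum, both composites are identities.

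The only step needing care is the verification that the fourth (Nijenhuis) relation is respected in Steps 1 and 2. This reduces entirely to the fact that $\iota_i$ and $\pi_i$ commute with the Nijenhuis operators, so no genuine obstacle is expected.
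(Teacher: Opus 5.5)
Your proposal is correct and follows essentially the same route as the paper, which defines the same two maps ($m\otimes (m'_i)_I\mapsto (m\otimes m'_i)_I$ and $(x_i)_I\mapsto \sum_i (\mathrm{id}_M\otimes\iota_i)(x_i)$) and simply asserts that they are mutually inverse. Your version supplies the details the paper leaves implicit: you check $(A,N)$-bilinearity, including the Nijenhuis relation, so that both maps are well defined, and you verify the two composites on generators.
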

\begin{proof}
Define group homomorphisms
\begin{eqnarray*}
f: M \ot_{(A,\N)}(\oplus_{i\in I}M'_{i}) \to \oplus_{i\in I}(M \ot_{(A,\N)} M'_{i}), \
m \ot_{(A,\N)} (m'_{i})_{I} \mapsto (m \ot_{(A,\N)} m'_{i})_{I},
\end{eqnarray*}
and
\begin{eqnarray*}
g: \oplus_{i\in I}(M \ot_{(A,\N)} M'_{i}) \to M \ot_{(A,\N)}(\oplus_{i\in I}M'_{i}), \
(m_{i}\ot_{(A,\N)} m'_{i})_{I} \mapsto \sum_{i \in I} m_{i} \ot_{(A,\N)} \iota_i(m'_{i}).
\end{eqnarray*}
Then we have that $f\circ g=\id_{\oplus_{i\in I}(M \ot_{(A,\N)} M'_{i})}$ and $g\circ f=\id_{M \ot_{(A,\N)}(\oplus_{i\in I} M'_{i})}$. 
\end{proof}

\begin{prop}\label{dir-sum-flat}
Let $(A,\N)$ be a Nijenhuis algebra and $\{(M'_{i},\N_{M'_i})~|~i \in I \}$  a family of left $(A,\N)$-modules. Then the direct sum $(\bigoplus_{i\in I} M'_{i}, \N_{\oplus M'_i})$ is flat if and only if each $(M'_{i},\N_{M'_i})$ is flat.
\end{prop}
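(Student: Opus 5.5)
We mimic the classical argument: combine Lemma~\ref{dir-otim-iso} with Lemma~\ref{dir-sum-injective} and use that flatness of a left $(A,\N)$-module $(M',\N_{M'})$ amounts to $-\ot_{(A,\N)}M'$ preserving injections, since right exactness is already available.

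First fix a monomorphism $i:(M_1,\N_{M_1})\to(M_2,\N_{M_2})$ of right $(A,\N)$-modules. Lemma~\ref{dir-otim-iso} gives isomorphisms
\[
f_k: M_k\ot_{(A,\N)}\Big(\bigoplus_{j\in I}M'_j\Big)\longrightarrow \bigoplus_{j\in I}\big(M_k\ot_{(A,\N)}M'_j\big),\qquad k=1,2.
\]
We then check naturality in the first variable. On pure tensors,
\[
f_2\circ\big(i\ot \id_{\oplus M'_j}\big)\big(m\ot (m'_j)_I\big)=(i(m)\ot m'_j)_I=\Big(\bigoplus_{j\in I}(i\ot\id_{M'_j})\Big)\circ f_1\big(m\ot (m'_j)_I\big).
\]
Since pure tensors generate, this gives a commutative square
\[
f_2\circ(i\ot\id_{\oplus M'_j})=\Big(\bigoplus_j (i\ot \id_{M'_j})\Big)\circ f_1 .
\]
Hence $i\ot\id_{\oplus M'_j}$ is injective if and only if $\bigoplus_j(i\ot\id_{M'_j})$ is injective.

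Next apply Lemma~\ref{dir-sum-injective}. Its statement is phrased for $(A,\N)$-modules, but the proof only uses $\ker\varphi=\oplus\ker\varphi_i$, which holds for abelian group homomorphisms. So $\bigoplus_j(i\ot\id_{M'_j})$ is injective exactly when every $i\ot\id_{M'_j}$ is injective.

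Both directions now follow. If each $M'_j$ is flat, every $i\ot\id_{M'_j}$ is injective, so $i\ot\id_{\oplus M'_j}$ is injective for all monomorphisms $i$, and the sum is flat. Conversely, if the sum is flat, then for every monomorphism $i$ each component $i\ot\id_{M'_j}$ is injective, so each $M'_j$ is flat.

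The main obstacle is the well-definedness of the maps $f_k$ in Lemma~\ref{dir-otim-iso} and the naturality check. In particular one must confirm that the relation $(\N_{M}(m),(m'_j))\sim(m,(\N_{M'_j}(m'_j)))$ is respected componentwise, which is exactly why $\N_{\oplus M'_i}$ is defined componentwise. Granting the lemma as stated, the remaining verification is routine on generators.
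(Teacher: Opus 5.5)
Your proof is correct and follows the same route as the paper: you transport injectivity of $i\ot\id_{\oplus M'_j}$ across the isomorphism of Lemma~\ref{dir-otim-iso}, then apply Lemma~\ref{dir-sum-injective} componentwise in both directions. You are more careful than the paper on two points it leaves implicit: the naturality of that isomorphism in the first variable, and the fact that Lemma~\ref{dir-sum-injective} applies to maps such as $i\ot\id_{M'_j}$, which are only abelian group homomorphisms.
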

\begin{proof}
Let $f: (M_1, \N_{M_1}) \to (M_2, \N_{M_2})$ be a right $(A,\N)$-module monomorphism with $(M_1,\N_{M_1})$ and $(M_2, \N_{M_2})$  two right $(A,\N)$-modules.

Assume that each left $(A,\N)$-module $(M'_{i},\N_{M'_i})$ is flat. Then each $f \ot_{(A,\N)} \id_{M'_{i}}: M_1 \ot_{(A,\N)} M'_{i} \to M_2 \ot_{(A,\N)} M'_{i}$
is injective. By Lemma~\ref{dir-sum-injective}, the homomorphism
$$\bigoplus_{i\in I}(f \ot_{(A,\N)} \id_{M'_{i}}): \bigoplus_{i\in I}(M_1 \ot_{(A,\N)} M'_{i}) \to \bigoplus_{i\in I}(M_2 \ot_{(A,\N)} M'_{i})$$
is also injective. Hence the $(A, \N)$-module $(\bigoplus_{i\in I} M'_{i},  \N_{\oplus M'_i})$ is flat by Lemma~\ref{dir-otim-iso}.

Conversely, assume that the left $(A,\N)$-module $(\bigoplus_{i\in I} M'_{i},\N_{\oplus M'_i})$ is flat. Then the group homomorphism
$$f \ot_{(A,\N)} \id_{(\bigoplus_{i\in I} M'_{i})}: M_1 \ot_{(A,\N)} \Big(\bigoplus_{i\in I} M'_{i}\Big) \to M_2 \ot_{(A,\N)} \Big(\bigoplus_{i\in I} M'_{i}\Big)$$
is an injective map. By Lemma~\ref{dir-otim-iso}, we have that
$$\bigoplus_{i\in I}(M_1 \ot_{(A,\N)} M'_{i}) \to \bigoplus_{i\in I}( M_2 \ot_{(A,\N)} M'_{i})$$
is also an injective map. By Lemma~\ref{dir-sum-injective}, for each $i\in I$, the map
$$
f \ot_{(A,\N)} \id_{M'_{i}}:  M_1 \ot_{(A,\N)} M'_{i} \to M_2 \ot_{(A,\N)} M'_{i}
$$
is injective. Thus each left $(A,\N)$-module $(M'_{i},\N_{M'_i})$ is flat.
\end{proof}

\begin{theorem}
Let $(A,\N)$ be a Nijenhuis algebra. Then every free left $(A,\N)$-module is flat.
\mlabel{thm:flat}
\end{theorem}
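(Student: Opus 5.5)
Every free left $(A,\N)$-module is a direct sum of copies of the free module on one generator. So by Proposition~\ref{dir-sum-flat} it suffices to show that the free module on a singleton is flat.

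First, the decomposition. The free module is $F(X)=M(X)/I_X$ from Theorem~\mref{thm:freem}. By the universal property of coproducts in ${}_{(A,\N)}{\bf Mod}$, $F(X)\cong\bigoplus_{x\in X}F(\{x\})$. Concretely, the direct sum of Nijenhuis modules with its componentwise operator satisfies the same universal property with respect to set maps $X\to M$. So the first step is this routine identification.

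Next, identify $F(\{x\})$. Under the isomorphism of categories in Theorem~\mref{thm:NM}, the free $(A,\N)$-module on one generator corresponds to the free $\NR$-module on one generator. Indeed, $\Hom_{(A,\N)}(F(\{x\}),M)\cong M\cong\Hom_{\NR}(\NR,M)$ naturally, so $F(\{x\})\cong(\NR,\N_Q)$ by Yoneda. Alternatively, one checks directly that $1_{\NR}$ generates $(\NR,\N_Q)$ freely: a map $x\mapsto m$ extends uniquely via $r\mapsto r\cdot m$.

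The key step is to show that for any right $(A,\N)$-module $(M,\N_M)$ there is a natural isomorphism of abelian groups
\[
M\ot_{(A,\N)}\NR\;\cong\;M,\qquad m\ot r\mapsto m\cdot r .
\]
Here $M$ is regarded as a right $\NR$-module via the right-module analogue of Theorem~\mref{thm:NM}, with $mQ:=\N_M(m)$. This uses the right version of relation~\eqref{eq:NOi}, which I expect to hold by the mirror of Eq.~\eqref{eq:rnm}.

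The inverse map is $m\mapsto m\ot 1$. To check that $m\ot r=mr\ot 1$, use the defining relations of $\ot_{(A,\N)}$: $ma\ot 1=m\ot a$ and $\N_M(m)\ot 1=m\ot \N_Q(1)=m\ot Q$. Since $\NR$ is generated as a ring by $A$ and $Q$, induct on monomials $r=ur'$ with $u\in A\cup\{Q\}$:
\[
m\ot ur' \;=\; m\ot u\cdot(\text{left action on } r')\;=\;mu\ot r',
\]
where $Q\cdot r'=\N_Q(r')$ allows us to move $Q$ across. Then $m\cdot r$ is well defined because the bilinear map $(m,r)\mapsto mr$ is $(A,\N)$-bilinear. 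This naturality and well-definedness check is the main obstacle. It needs care about the side on which $Q$ acts and about the fact that a right Nijenhuis module really is a right $\NR$-module; that fact should be verified by the mirror image of the proof of Theorem~\mref{thm:NM}.

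Given this isomorphism, naturality in $M$ shows that $f\ot\id_{\NR}$ corresponds to $f$ itself. Hence it is injective whenever $f$ is, so $(\NR,\N_Q)$ is flat. Proposition~\ref{dir-sum-flat} then gives flatness of $F(X)\cong\bigoplus_{x\in X}(\NR,\N_Q)$.
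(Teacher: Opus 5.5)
Your reduction to one generator and the identification $F(\{x\})\cong(U_N(A),N_Q)$ are fine. Your route is essentially the paper's: everything rests on an isomorphism $M\otimes_{(A,N)}F(\{x\})\cong M$. The gap is the point you flagged, and it cannot be closed.

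A right $(A,N)$-module is not a right $U_N(A)$-module via $mQ:=N_M(m)$. In this notation Eq.~\eqref{eq:rnm} reads $m\,QN(a)=m\,(QaQ+N(a)Q-aQ^2)$. So it imposes $QaQ=QN(a)-N(a)Q+aQ^2$, the mirror image of \eqref{eq:NOi} rather than \eqref{eq:NOi} itself. Right $(A,N)$-modules are therefore right modules over a different quotient of $\bfk\langle A,\bfk[Q]\rangle$. Hence $m\cdot r$ is undefined, and your induction on monomials only shows that $m\mapsto m\otimes 1$ is surjective. Your argument is the standard $M\otimes_U U\cong M$, and it would be valid if right modules were defined as right $U_N(A)$-modules, but that is not the notion in Eq.~\eqref{eq:rnm}.

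The paper's proof has the same defect. Its map $m\otimes(a_1\otimes\cdots\otimes a_n)x\mapsto ma_1\cdots a_n$ ignores $N_M$ and does not respect the relation $N_M(m)\otimes y=m\otimes\overline{P}_X(y)$, so it is not well defined.

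In fact the statement itself fails. Let $\bfk$ be a field, $A=\bfk[x]$ and $N=0$. Left modules then satisfy $N_F(aN_F(y))=N_F^2(ay)$, and right modules satisfy $N_M(N_M(m)a)=N_M^2(ma)$. Let $M_2=\bfk e_1\oplus\bfk e_2$ with right action $e_1x=e_2$, $e_2x=0$, and operator $N_{M_2}(e_1)=e_1$, $N_{M_2}(e_2)=0$. For $m=\alpha e_1+\beta e_2$, both sides of the right axiom equal $\alpha a(0)e_1$, where $a(0)$ is the constant term of $a$. So $M_2$ is a right $(A,0)$-module, and $M_1=\bfk e_2$ is a submodule. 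Let $F$ be the free left module on one generator $u$. Using $e_1=N_{M_2}(e_1)$ and the left axiom, in $M_2\otimes_{(A,0)}F$ we get
\[
\begin{aligned}
e_2\otimes u&=e_1\otimes xu=e_1\otimes N_F^2(xu)=e_1\otimes N_F\bigl(xN_F(u)\bigr)\\
&=e_1\otimes xN_F(u)=e_2\otimes N_F(u)=N_{M_2}(e_2)\otimes u=0.
\end{aligned}
\]
On the other hand, $e_2\otimes u\neq 0$ in $M_1\otimes_{(A,0)}F$. Let $\varepsilon\colon F\to\bfk$ be the homomorphism with $\varepsilon(u)=1$ into the module $\bfk$ on which $x$ and the operator act by $0$. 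Then $(\lambda e_2,y)\mapsto\lambda\varepsilon(y)$ is $(A,0)$-bilinear and sends $(e_2,u)$ to $1$.

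So $M_1\otimes F\to M_2\otimes F$ is not injective, and $F$ is not flat. The isomorphism $m\mapsto m\otimes u$ that both you and the paper rely on already fails for $M=M_2$.
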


\begin{proof}
Let $(M(X)/I_X,\overline{P}_X)$ be the free left $(A,\N)$-module generated by $X$ as constructed in Theorem~\mref{thm:freem}. For any right $(A,\N)$-module monomorphism $(M_1,\N_{M_1})\to (M_2,\N_{M_2})$, we need to show that the induced map $M_1 \ot_{(A,\N)} (M(X)/I_X) \to M_2 \ot_{(A,\N)} (M(X)/I_X)$ is also an abelian group monomorphism.

To prove this, we show that for any right $(A,\N)$-module $(M,\NM)$, there is an isomorphism
\begin{align*}
M \ot_{(A,\N)} (M(X)/I_X) \cong \oplus_{x \in X} M.
\end{align*}

 First, for singleton set $X=\{ x \}$, we have 
\begin{align*}
M(\{ x\})/I_X= \left( \oplus_{n \geq 1} A^{\ot n} \right)x/I_{\{ x\}}.
\end{align*}
Define maps 
\begin{align*}
f: M \ot_{(A,\N)} \left( \left(\oplus_{n \geq 1} A^{\ot n} x\right)/I_{\{ x\}} \right) \to M, \quad m \ot_{(A, \N)} ((a_1 \ot \cdots \ot a_n) x+ I_{\{ x\}}) \mapsto ma_1\cdots a_n
\end{align*}
and
\begin{align*}
g: M \to M \ot_{(A,\N)} \left( \left(\oplus_{n \geq 1} A^{\ot n} x\right)/I_{\{ x\}} \right), \quad m \mapsto m \ot_{(A,\N)} (1_{A} \ot x+ I_{\{x \} }).
\end{align*}
Then we have that $f\circ g=\id_{M}$ and $g \circ f=\id_{M\ot_{(A,\N)}((\oplus_{n\geq 1} A^{\otimes n})x/I_{\{x\}})}$. Thus $M \ot_{(A, \N)} (M(\{ x\})/I_{\{x\} }) \cong M$.

For any set $X$, by noting $M(X)/I_X \cong \bigoplus_{x\in X} M(\{x\})/I_{\{x\}})$, we have
\begin{align*}
& M\ot_{(A,\N)} (M(X)/I_X) \cong M\ot_{(A,\N)} \Big(\bigoplus_{x\in X} M(\{x\})/I_{\{x\}}\Big)\\ 
 \cong & \bigoplus_{x\in X}(M\ot_{(A,\N)} M(\{x\})/I_{\{x\}}) \quad (\text{by Lemma~\ref{dir-otim-iso}}) \\
\cong & \bigoplus_{x\in X} M.
\end{align*}

Consequently, $M_1\ot_{(A,\N)} (M(X)/I_X)\cong \oplus_{x\in X} M_1$, and $M_2\ot_{(A,\N)}(M(X)/I_X)\cong \oplus_{x\in X} M_2$. Then by Lemma~\ref{dir-sum-injective}, the group homomorphism $\oplus_{x\in X} M_1 \to \oplus_{x\in X} M_2$ is injective. Hence the free left $(A,\N)$-module $(M(X)/I_X, \overline{P}_X)$ is flat.
\end{proof}

\begin{lemma}\label{proj-free}
Let $(A,\N)$ be a Nijenhis algebra. Then every projective left $(A,\N)$-module is a direct summand of a free $(A,\N)$-module.
\end{lemma}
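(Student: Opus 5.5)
Let $(P,\N_P)$ be a projective left $(A,\N)$-module; the plan is the classical one. By Corollary~\mref{coro:quof}(\mref{it:quof}) there is a free left $(A,\N)$-module $(F(X),\N_{F(X)})$, for instance $(M(X)/I_X,\overline{P}_X)$ with $X$ the underlying set of $P$ as in Theorem~\mref{thm:freem}, together with an epimorphism $p:(F(X),\N_{F(X)})\to (P,\N_P)$. Concretely, take the identity set map $X=P\to P$ and let $p$ be its unique extension. Surjectivity is clear, since $p\circ j$ is the identity on $P$.

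Next apply Definition~\mref{defn:proj} with $f=p$ and $g=\id_P$. This gives a left $(A,\N)$-module homomorphism $s:(P,\N_P)\to (F(X),\N_{F(X)})$ with $p\circ s=\id_P$. We then show that $F(X)=s(P)\oplus \Ker p$ as left $(A,\N)$-modules. By Proposition~\mref{prop:kernel}, $(\Ker p,\N_{F(X)}|_{\Ker p})$ and $(\IIm s,\N_{F(X)}|_{\IIm s})$ are $(A,\N)$-submodules. The usual decomposition $x=s(p(x))+(x-s(p(x)))$ shows $F(X)=\IIm s+\Ker p$. If $x=s(y)\in \Ker p$, then $y=p(s(y))=0$, so the sum is direct.

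The operator $\N_{F(X)}$ preserves both summands, so it agrees with the direct-sum operator of Example~\mref{exam:njm}(5). Since $s$ is injective, Proposition~\mref{prop:kernel}(\mref{it:3}) gives $(P,\N_P)\cong(\IIm s,\N_{F(X)}|_{\IIm s})$. Hence $(F(X),\N_{F(X)})\cong (P,\N_P)\oplus(\Ker p,\N_{F(X)}|_{\Ker p})$ as left $(A,\N)$-modules.

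The only point needing care is that the internal direct sum decomposition respects the Nijenhuis operators, so that it is a direct sum in ${}_{(A,\N)}{\bf Mod}$ and not merely of $A$-modules. This follows because $s$ and $p$ commute with the operators, so $\N_{F(X)}$ commutes with the idempotent $s\circ p$. Alternatively, one can transport the whole argument through Theorem~\mref{thm:NM} to $U_\N(A)$-modules, where it is the standard fact about projective modules.
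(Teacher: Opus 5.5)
Your proposal is correct and is essentially the paper's proof. You take the free module on the underlying set of $P$, use projectivity to lift $\id_P$ through the canonical epimorphism $p$, and split $F(X)=\IIm s\oplus\Ker p$. Your extra check, via Proposition~\ref{prop:kernel}, that the operator preserves both summands (so the splitting holds in ${}_{(A,\N)}{\bf Mod}$ and not only for $A$-modules) fills in a detail the paper leaves implicit.
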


\begin{proof}
Let $(M,\NM)$ be a projective left $(A,\N)$-module. Denote by $(F(M), \overline{P}_M)$ with the map $j: M \to F(M)$  the free left $(A,\N)$-module generated by the underlying set $M$. 

By the freeness of $(F(M), \overline{P}_M)$, there is a left $(A,\N)$-module epimorphism $f:(F(M), \overline{P}_M) \to (M, \NM)$ such that $f \circ j=\id_{M}$.  Then by the projectivity of $(M. \NM)$, for the epimorphism $f:(F(M), \overline{P}_M) \to (M,\NM)$, there is a left $(A,\N)$-module homomorphism $g:(M, \NM) \to (F(M), \overline{P}_M)$ such that $f \circ g=\id_{M}$. Hence
\begin{align*}
F(M)=\IIm g \oplus \Ker f \cong M \oplus \Ker f,
\end{align*}
as required.
\end{proof}

By Proposition~\ref{dir-sum-flat}, Theorem~\ref{thm:flat} and Lemma~\ref{proj-free}, we obtain the following conclusion.

\begin{theorem}
Let $(A,\N)$ be a Nijenhuis algebra. Then every projective left $(A,\N)$-module is flat.
\mlabel{thm:enouf}
\end{theorem}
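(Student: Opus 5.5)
The plan is to combine Lemma~\ref{proj-free}, Theorem~\mref{thm:flat} and Proposition~\ref{dir-sum-flat}, as the paper indicates. Let $(P,\N_P)$ be a projective left $(A,\N)$-module.

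First, Lemma~\ref{proj-free} gives a free left $(A,\N)$-module $(F(P),\overline{P}_P)$ and an isomorphism $F(P)\cong P\oplus \Ker f$. I will check that this is an isomorphism of left $(A,\N)$-modules. Here $f$ and $g$ are $(A,\N)$-module homomorphisms with $f\circ g=\id_P$. By Proposition~\mref{prop:kernel}, $\Ker f$ and $\IIm g$ are $(A,\N)$-submodules stable under $\overline{P}_P$. The splitting map $x\mapsto (f(x),\,x-gf(x))$ commutes with the operators, because $f$ and $g$ do. So $(F(P),\overline{P}_P)\cong (P,\N_P)\oplus(\Ker f,\overline{P}_P|_{\Ker f})$, with the direct-sum operator of Example~\mref{exam:njm}.

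Next, Theorem~\mref{thm:flat} says that $(F(P),\overline{P}_P)$ is flat. Flatness is invariant under isomorphism of left $(A,\N)$-modules. Indeed, an isomorphism $\theta:M'\to M''$ induces isomorphisms $\id\otimes\theta:M_k\otimes_{(A,\N)}M'\to M_k\otimes_{(A,\N)}M''$ that are natural in $M_k$. This uses that $\theta$ respects the relations $(m a, m')\sim(m, a m')$ and $(\N_{M_k}(m),m')\sim(m,\N_{M'}(m'))$. Hence the direct sum $(P,\N_P)\oplus(\Ker f,\overline{P}_P|_{\Ker f})$ is flat.

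Finally, the ``only if'' direction of Proposition~\ref{dir-sum-flat}, applied to a two-element index set, shows that $(P,\N_P)$ is flat. The only step needing care is the first one: the decomposition in Lemma~\ref{proj-free} was stated only as one of $A$-modules, so its compatibility with the Nijenhuis operators has to be verified. That verification is the short computation above.
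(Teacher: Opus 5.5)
Your proposal follows the paper's proof exactly: the paper proves this theorem by combining Lemma~\ref{proj-free}, Theorem~\ref{thm:flat} and Proposition~\ref{dir-sum-flat}. Your extra checks are correct and fill in details the paper omits, namely that $x\mapsto (f(x),x-gf(x))$ is an isomorphism of $(A,N)$-modules and that flatness is invariant under isomorphism. The genuine gap is the step you treated as routine. Theorem~\ref{thm:flat} is false, and so is the statement you are asked to prove.

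The proof of Theorem~\ref{thm:flat} uses the map $m\otimes((a_1\otimes\cdots\otimes a_n)x+I_{\{x\}})\mapsto ma_1\cdots a_n$. This map is not well defined on $M\otimes_{(A,N)}(M(\{x\})/I_{\{x\}})$. The tensor product imposes $N_M(m)\otimes u=m\otimes\overline{P}_X(u)$, and for $u=j(x)$ (the class of $1_A\otimes x$) the two sides are sent to $N_M(m)$ and $m$ respectively. The underlying cause is a left/right mismatch. Writing the operators as $Q$, the product $\otimes_{(A,N)}$ is the tensor product over the free product $\mathbf{k}\langle A,\mathbf{k}[Q]\rangle$. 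The free left module of rank one is $U_N(A)$, the quotient by $I_{A,Q}$. However, Eq.~(\ref{eq:rnm}) says that a right $(A,N)$-module is killed by $QaQ-QN(a)+N(a)Q-aQ^2$, not by the generators $QaQ-N(a)Q+QN(a)-Q^2a$ of $I_{A,Q}$. Hence $M\otimes_{(A,N)}U_N(A)\cong M/MI_{A,Q}$, and $M\mapsto M/MI_{A,Q}$ need not preserve injections.

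Here is an explicit counterexample.
\begin{itemize}
\item Let $\mathbf{k}\neq 0$, let $A=\mathbf{k}e_1\oplus\mathbf{k}e_2$ with orthogonal idempotents $e_1,e_2$, and let $N=\id_A$.
\item Let $V=\mathbf{k}v_1\oplus\mathbf{k}v_2$ with $v_ie_j=\delta_{ij}v_i$, $N_V(v_1)=2v_1+v_2$ and $N_V(v_2)=v_2$. Checking Eq.~(\ref{eq:rnm}) on the four pairs $(v_i,e_j)$ shows that $V$ is a right $(A,N)$-module, and $M_1=\mathbf{k}v_2$ is a submodule.
\item Let $F$ be the free left $(A,N)$-module on $u$; it is projective by Proposition~\ref{prop:frep}.
\end{itemize}
Take the identity $e_1N_F(u)=N_F(e_1u)+N_F(e_1N_F(u))-N_F^2(e_1u)$, tensor it on the left with $v_1$, and move every $e_1$ and $N_F$ to the left factor. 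In $V\otimes_{(A,N)}F$ this gives
\[
(2v_1+v_2)\otimes u=2v_1\otimes u+(4v_1+2v_2)\otimes u-4v_1\otimes u,
\]
hence $v_2\otimes u=0$.

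On the other hand, $v_2\otimes u\neq 0$ in $M_1\otimes_{(A,N)}F$. Let $\mathbf{k}$ be the left $(A,N)$-module with $e_1$ acting as $0$, $e_2$ as $1$, and operator $\id$. It gives a homomorphism $\phi\colon F\to\mathbf{k}$ with $\phi(u)=1$, and $(\lambda v_2,w)\mapsto\lambda\phi(w)$ is $(A,N)$-bilinear. Thus the induced map $M_1\otimes_{(A,N)}F\to V\otimes_{(A,N)}F$ is not injective, so $F$ is projective but not flat.

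With the paper's definitions the statement cannot be proved by any route. The classical argument works only if the setup is changed so that $\otimes_{(A,N)}$ becomes $\otimes_{U_N(A)}$, for instance by defining right $(A,N)$-modules as right $U_N(A)$-modules.
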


Theorem~\mref{thm:enouf} shows that there are enough flat Nijenhuis modules, allowing us to define the derived tensor functors.

\noindent {\bf Acknowledgements}: This work was supported by the National Natural Science Foundation of China (Grant No.~12301025).

\noindent {\bf Author Contributions}: All the authors contributed equally to this work.

\noindent {\bf Data Availability}: No datasets were generated or analysed during the current study.

\vspace{0.5cm}

\hspace{-0.4cm}{\bf Declarations}

\noindent {\bf Conflict of interest}: The authors state that there is no conflict of interest.

\noindent {\bf Competing interests}: The authors declare no competing interests.

\end{document}